\theoremstyle{plain}
\newtheorem{prop}{Proposition}[section]
\newtheorem{thm}[prop]{Theorem}
\newtheorem{coro}[prop]{Corollary}
\theoremstyle{definition}
\newtheorem{exam}[prop]{Example}
\newtheorem{rem}[prop]{Remark}
\newtheorem{question}[prop]{Question}
\newtheorem{problem}[prop]{Problem}
\theoremstyle{remark}
\numberwithin{equation}{section}
\newcommand{\N}{\mathbb N}
\newcommand{\Z}{\mathbb Z}
\newcommand{\Q}{\mathbb Q}
\newcommand{\R}{\mathbb R}
\newcommand{\C}{\mathbb C}
\newcommand{\G}{\Gamma}
\newcommand{\ba}{\backslash}
\newcommand{\Ot}{\operatorname{O}}
\newcommand{\SO}{\operatorname{SO}}
\newcommand{\Hom}{\operatorname{Hom}}
\newcommand{\so}{\mathfrak{so}}
\newcommand{\op}{\operatorname}
\newcommand{\diag}{\operatorname{diag}}
\newcommand{\triv}{\mathbf{1}}
\newcommand{\norma}[1]{\|{#1}\|_1}
\newcommand{\ww}{\theta}
\newcommand{\zz}{\ell}
\newcommand{\QQ}{Q}
\title[Non-strongly spherical space forms]{Non-strongly isospectral spherical space forms}
\author{E. A. Lauret, R. J. Miatello and J. P. Rossetti}
\address{CIEM--FaMAF \\ Universidad Nacional de C\'ordoba\\ 5000-C\'ordoba, Argentina.}
\email{elauret@famaf.unc.edu.ar}
\email{miatello@famaf.unc.edu.ar}
\email{rossetti@famaf.unc.edu.ar}
\subjclass[2010]{58J53}
\keywords{Isospectral, spherical space forms, lens spaces, $p$-spectrum}
\date{March 25, 2015}
\begin{document}

\maketitle

\begin{abstract}
In this paper we describe recent results on   explicit construction of lens spaces that are not strongly isospectral, yet they are isospectral on $p$-forms for every $p$. Such examples cannot be obtained by the Sunada method. We also discuss related results, emphasizing on significant classical work of Ikeda on isospectral lens spaces, via a thorough study of the associated generating functions.
\end{abstract}

\section{Introduction}

Two compact Riemannian manifolds are said to be isospectral if the spectra of their Laplace operators on functions are the same.
More generally, they are said to be $p$-isospectral if the spectra of  their Hodge-Laplace operators acting on $p$-forms are the same.

Recently, in \cite{LMRhodge}, we have found examples of pairs of lens spaces that are $p$-isospectral for every $p$.
Since lens spaces have cyclic fundamental group, they cannot be \emph{strongly isospectral}.
To the best of our knowledge these are the first (connected) examples of this kind.
By showing a nice connection between isospectrality of lens spaces and isospectrality of certain
associated integral lattices with respect to the one-norm, we were able to
construct an infinite family of pairs of $5$-dimensional lens spaces that are $p$-isospectral for every $p$.

Before this,
A.~Ikeda found many interesting examples of isospectral lens spaces. The main tool of his approach was the generating function associated to the spectrum. Our method does not use generating functions, but relies on the representation theory of compact Lie groups.

In view of our construction of new families and the opening connection with one-norm isospectral integral lattices, we expect it will be useful to write this article attempting to bring together in a more accessible way, our method, the foundational work of Ikeda and the method of Sunada.

Historically, the first example of isospectral non-isometric manifolds was a pair of tori constructed by using lattices of dimension $n=16$ (\cite{Mi64}, \cite{Wi}). The dimension was reduced from 16 to 4 in several articles (see \cite{Schi1}, \cite{CS2} and the references therein).
Such lattices are isospectral with respect to the standard norm $\|\cdot\|_2$, that is, for each length they have the same number of vectors of that length.

Besides these examples, many other contributions
have been given, showing different connections
between the spectra and the geometry of a Riemannian manifold.
In \cite{Su} T.~Sunada  gave a general method  that  produces strongly isospectral manifolds,
that is, manifolds isospectral for every natural strongly elliptic operator acting on sections of a natural
vector bundle, in particular they are $p$-isospectral for all $p$. Later, this method was extended and applied by many authors, in particular by D.~DeTurck-C.~Gordon \cite{DG89}, P.~B\'erard \cite{Be93} and H.~Pesce \cite{Pe96}.
In the context of spherical space forms, A.~Ikeda \cite{Ik83},
P.~Gilkey~\cite{Gi}, J.~A.~Wolf~\cite{Wo2}  produced  Sunada isospectral
forms with non-cyclic fundamental groups.

 The construction of manifolds that are $p$-isospectral for some values of $p$ only cannot be attained by Sunada's method.
 The first such pair was given by C.~Gordon in \cite{Go}.
 Among other known examples we mention those in \cite{Gt00} for nilmanifolds and those  given in \cite{MRp}, \cite{MRl}, \cite{DR1} for compact flat manifolds.

A.~Ikeda studied the spectrum of spherical space forms in several interesting articles (see \cite{Ik80a}, \cite{Ik80b},\cite{Ik80c}, \cite{Ik83}, \cite{Ik88}). He developed the theory of generating functions associated to spectra, obtaining many isospectral examples of Sunada and non-Sunada type.
In particular, for each given $p_0$, he constructed families of lens spaces that are $p$-isospectral for every $0\le p\le p_0$, but are not $p_0+1$-isospectral.
None of Ikeda's examples of isospectral lens spaces are $p$-isospectral for all $p$
and actually until very recently, no examples were known of compact Riemannian manifolds
that are $p$-isospectral for every $p$ but are not strongly isospectral.
This question has been around for some time (see \cite[p.~323]{Wo2}).
In  \cite{LMRhodge} we find a rather surprising two-parameter infinite family of pairs of lens spaces that are $p$-isospectral  for every $p$,  but are not strongly isospectral. We also give many more examples obtained with the help of the computer and also examples in arbitrarily large dimensions.

The paper is organized as follows. Section~2 is devoted to describe summarily Sunada's method and its generalizations. In Section~3 we develop the necessary tools of representation theory of compact Lie groups to be used in the proofs of our main results in Section~\S 5. Section~4 is devoted to  Ikeda's important work, that is scattered in several papers that are  sometimes hard to follow. We have tried to make it more accessible, including the main ideas in most of the proofs. In Section 5 we describe our construction of isospectral lens spaces in dimension $n=2m-1$ by means of one-norm isospectral integral lattices in $\Z^m$. A detailed description of the methods and the results is given at the beginning of the section. The paper finishes with tables, obtained by computer methods, listing all existing examples for $n=5$, 7 and 9,  where the order of the fundamental group $q$ is less than 500, 300 and 150 respectively.
We have left some open questions or problems, usually at the end of the sections or subsections.

\noindent {\it Acknowledgement.} The authors wish to thank Peter Doyle for stimulating discussions and for facilitating the use of fast computer programs to check the tables in Section~5.

\section{Sunada's method}
T.~Sunada~\cite{Su} gave a simple and effective method that allowed to produce a great variety of examples of isospectral manifolds.
It is based on a triple of finite groups $\G_1,\G_2, G$, where $\G_1,\G_2$ are subgroups of $G$ that are almost conjugate in $G$, that is, there is a bijection from $\G_1$ to $\G_2$ that preserves  $G$-conjugacy.
The first such triples were given by Gassmann~\cite{Ga26} who used them to give pairs of non-isomorphic number fields having the same Dedekind zeta function.
The Sunada theorem can be stated as follows.

\begin{thm}\label{thm:Sun}
Let $\G_1,\G_2$ be  almost conjugate subgroups of a finite group $G$.
Assume that $G$ acts by isometries on a Riemannian manifold $M$ in such a way that $\G_1,\G_2$ act freely.
Then the manifolds $\G_1\backslash M$ and $\G_2\backslash M$ are strongly isospectral.
\end{thm}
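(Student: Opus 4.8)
The plan is to prove Sunada's theorem by reducing the equality of spectra to an equality of traces of the heat kernel, and then to exploit the almost-conjugacy hypothesis to show these traces agree. Let me sketch the standard approach.

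First I would recall the setup. We have a natural strongly elliptic operator $D$ acting on sections of a natural vector bundle over $M$ (for the scalar case, $D$ is the Laplacian on functions). Since $G$ acts by isometries, $D$ commutes with the $G$-action, and one obtains induced operators $D_1, D_2$ on the quotients $\G_1 \backslash M$ and $\G_2 \backslash M$. The key analytic object is the heat kernel $K(t,x,y)$ associated to $D$ on $M$. A standard fact I would invoke is that two closed manifolds are strongly isospectral if and only if the corresponding heat traces agree for all $t > 0$, since the heat trace $\sum_k e^{-\lambda_k t}$ determines the spectrum.

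The central computation is to express the heat trace on $\G_i \backslash M$ in terms of the heat kernel on $M$ upstairs. By the standard averaging (method of images) argument, the heat kernel on $\G_i \backslash M$ is obtained by summing $K$ over the group:
\begin{equation*}
K_{\G_i}(t,\bar x,\bar y) = \sum_{\g \in \G_i} K(t, x, \g y).
\end{equation*}
Taking the trace, i.e. integrating over the diagonal of the fundamental domain and summing over eigenvalues, yields
\begin{equation*}
\tr\bigl(e^{-tD_i}\bigr) = \frac{1}{|\G_i|} \sum_{\g \in \G_i} \int_M \tr\bigl(K(t,x,\g x)\bigr)\, dx.
\end{equation*}
The next step I would carry out is to observe that the quantity $\Phi(\g) := \int_M \tr(K(t,x,\g x))\,dx$ depends only on the $G$-conjugacy class of $\g$. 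This is because $K$ is $G$-invariant, so conjugating $\g$ by $g \in G$ and changing variables $x \mapsto g^{-1}x$ (using that $g$ acts by isometries, preserving the volume) leaves the integral unchanged.

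Finally I would combine these observations. Since $\G_1$ and $\G_2$ are almost conjugate, there is a bijection $\G_1 \to \G_2$ preserving $G$-conjugacy classes; in particular $|\G_1| = |\G_2|$ and $\G_1, \G_2$ meet each $G$-conjugacy class in the same number of elements. Because $\Phi$ is a class function of $\g$, summing $\Phi$ over $\G_1$ gives the same value as summing over $\G_2$, so the two heat traces coincide for every $t$, and strong isospectrality follows. The main obstacle — or at least the point requiring the most care — is making rigorous the class-function property of $\Phi$ together with the convergence and the precise form of the heat-kernel averaging for a general natural strongly elliptic operator on a natural vector bundle, rather than merely for the scalar Laplacian; this requires that the bundle and operator be canonically attached to the metric so that the $G$-equivariance used in the change of variables genuinely holds.
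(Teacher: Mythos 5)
Your argument is correct and complete in its essentials: the method-of-images formula for the quotient heat kernel, the $1/|\G_i|$ normalization when passing from a fundamental domain to all of $M$, the class-function property of $\Phi(\g)=\int_M \tr\bigl(K(t,x,\g x)\bigr)\,dx$ via the volume-preserving change of variables $x\mapsto g^{-1}x$, and the final bookkeeping using the almost-conjugacy bijection (which in particular forces $|\G_1|=|\G_2|$) constitute precisely the classical heat-trace proof, essentially Sunada's original one in \cite{Su}. Be aware, however, that the paper itself gives no proof of this theorem: it is quoted as Sunada's result, so there is no internal argument to compare against. It is still worth contrasting your route with the one the survey's own framework suggests: as remarked immediately after the theorem, almost conjugacy of $\G_1,\G_2$ in a finite group $G$ is equivalent to the equivalence of the right regular representations of $G$ on $C(\G_1\backslash G)$ and $C(\G_2\backslash G)$, i.e.\ to representation equivalence, and then Theorem~\ref{thm:dTG} (DeTurck--Gordon) delivers strong isospectrality. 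That representation-theoretic route replaces your explicit kernel computation by Frobenius reciprocity and a transplantation of eigensections, and it scales up to Lie groups $G$ with discrete cocompact $\G_i$; your heat-trace argument is more elementary and self-contained for finite $G$, at the cost of the one technical point you correctly flagged, namely that naturality of the bundle and operator is exactly what guarantees the $G$-equivariance of the heat kernel used in the class-function step.
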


Given a Gassmann triple $\G_1,\G_2, G$, to place oneself in the conditions of Sunada's theorem it is sufficient to give a Riemannian manifold $M_0$ such that there is a surjective homomorphism $\phi: \pi_1(M_0)\rightarrow G$.
Sunada gave many applications of this theorem, in particular, he constructed large sets of pairwise isospectral non-isometric Riemann surfaces for any genus $g\ge 5$. Also, he showed that manifolds $\G_1\backslash M$ and $\G_2\backslash M$ as in the theorem must have the same lengths of closed geodesics. We note, however, that these lengths need not have the same multiplicities (see for instance \cite{Go85}, \cite{Gt94}, \cite{Gt96} and \cite{MRl}).

Sunada's result was intensely exploited and was followed by several generalizations. Still today, the method accounts for most of the known examples of isospectral manifolds.  We note that the condition of almost conjugacy in the finite group $G$ is equivalent to a condition in terms of group representations, namely, that the right
regular representations of $G$ on the function spaces $C(\G_1\backslash G)$ and $C(\G_2\backslash G)$ are equivalent representations.
More generally, if $G$ is a Lie group and $\Gamma_1, \Gamma_2$ are discrete cocompact subgroups, then $\G_1$, $\G_2$ are said to be \emph{representation equivalent} in $G$, if the right
regular representations of $G$ on $L^2(\G_1\backslash G)$ and $L^2(\G_2\backslash G)$ are equivalent representations.

The following generalization of Theorem~\ref{thm:Sun}, due to DeTurck-Gordon~\cite{DG89} (see also \cite{Be93}), is very useful.

\begin{thm}\label{thm:dTG}
Let $G$ be a Lie group acting by isometries on a Riemannian manifold $M$ and let $\G_1,\G_2$ be  discrete subgroups of $G$ such that  $\G_1\backslash M$ and $\G_2\backslash M$ are compact manifolds.
If, furthermore, $\G_1$, $\G_2$ are representation equivalent in $G$, then $\G_1\backslash M$ and $\G_2\backslash M$ are strongly isospectral.
\end{thm}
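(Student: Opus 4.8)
The plan is to prove the stronger statement directly: for \emph{every} natural self-adjoint strongly elliptic operator $D$ acting on sections of a natural vector bundle $E\to M$, the induced operators $D_1,D_2$ on $\G_1\ba M$ and $\G_2\ba M$ have the same spectrum. Since the spectrum of each $D_i$ is encoded in the heat trace $\theta_i(t)=\operatorname{Tr}(e^{-tD_i})$ (equivalently in $\operatorname{Tr}f(D_i)$ for a rich enough family of test functions $f$), it suffices to show $\theta_1(t)=\theta_2(t)$ for all $t>0$. First I would record that, because $D$ is natural and $G$ acts by isometries, $D$ commutes with the $G$-action on sections; consequently the smooth heat kernel $k_t$ of $e^{-tD}$ on $M$ is $G$-equivariant, $k_t(gx,gy)=g\circ k_t(x,y)\circ g^{-1}$ as maps between the relevant fibers of $E$.

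Next I would periodize. Sections over $\G_i\ba M$ are the $\G_i$-invariant sections over $M$, so the heat operator downstairs has kernel $K^i_t(x,y)=\sum_{\g\in\G_i}k_t(x,\g y)\circ\g$, whence
$$\theta_i(t)=\int_{\G_i\ba M}\sum_{\g\in\G_i}\tr\big[k_t(x,\g x)\circ\g\big]\,d\bar x.$$
Writing $\phi_\g(x)=\tr[k_t(x,\g x)\circ\g]$ and using the $G$-equivariance of $k_t$ together with the conjugation-invariance of the fiber trace, one checks the covariance $\phi_{g\g g^{-1}}(gx)=\phi_\g(x)$ for every $g\in G$. Grouping the sum over $\G_i$ into $G$-conjugacy classes then turns $\theta_i(t)$ into a sum indexed by the $G$-conjugacy classes meeting $\G_i$, each weighted by a geometric term built from the corresponding $\G_i$-conjugacy data (a cardinality when $G$ is finite, an orbital integral weighted by the covolume of the centralizer in the general cocompact case).

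The crux is to identify this geometric weighting with the hypothesis of representation equivalence. The heat operator $e^{-tD}$ is precisely a $G$-invariant smoothing operator, so $\theta_i(t)$ is the trace of the corresponding convolution-type operator on $L^2(\G_i\ba G)$ (realized, in the homogeneous model $M=G/K$, on the $K$-isotypic piece $(L^2(\G_i\ba G)\otimes V_\sigma)^K$ determined by $E$). A $G$-module isomorphism $L^2(\G_1\ba G)\cong L^2(\G_2\ba G)$ intertwines every such $G$-invariant operator and therefore preserves its trace; equivalently, the two geometric sides produced above coincide term by term. This yields $\theta_1(t)=\theta_2(t)$ for all $t$, hence equality of the spectra of $D_1$ and $D_2$, and, $D$ being arbitrary, strong isospectrality.

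The main obstacle is the passage through $G$-conjugacy classes when $G$ is a noncompact Lie group and $\G_1,\G_2$ are merely discrete and cocompact: the fundamental domains for the two actions differ, the conjugating elements $g$ need not lie in $\G_i$, and one must match orbital integrals weighted by centralizer covolumes rather than plain cardinalities. The cleanest way to sidestep this bookkeeping is to avoid kernels altogether and work in the homogeneous realization, where the natural operator $D$ acts on $(L^2(\G_i\ba G)\otimes V_\sigma)^K$ through the fixed $G$-module structure (e.g.\ via the Casimir element), so that the intertwiner furnished by representation equivalence carries $D_1$ to $D_2$ on the nose and the spectral identity is immediate; this also covers the case $M=S^n$ relevant to lens spaces.
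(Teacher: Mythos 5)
First, a point of comparison: the paper itself does not prove Theorem~\ref{thm:dTG}; it quotes it from DeTurck--Gordon \cite{DG89} (see also B\'erard \cite{Be93}). So your proposal can only be measured against the standard argument in those references, and on its own terms it has a genuine gap: neither of your two arguments proves the theorem \emph{as stated}. Your first argument (heat trace, periodization over $\G_i$, regrouping into $G$-conjugacy classes) is abandoned exactly at its crux. The claim that representation equivalence makes ``the two geometric sides coincide term by term'' is not a formal consequence of the existence of an intertwiner: the heat operator of $\G_i\ba M$ is not an operator on $L^2(\G_i\ba G)$, so the sentence ``a $G$-module isomorphism intertwines every such $G$-invariant operator and therefore preserves its trace'' has no meaning until one has built an induced-picture realization of sections over $\G_i\ba M$ inside something manufactured from $L^2(\G_i\ba G)$; and matching the conjugacy-class sums against centralizer covolumes (the volume-weighted condition displayed right after Theorem~\ref{thm:dTG} in Section~2) requires unimodularity, convergence and fundamental-domain bookkeeping --- it is a theorem in itself, not an ``equivalently''. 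You concede this obstacle, but your pivot silently changes the hypotheses: the ``cleanest way'' assumes $M=G/K$ is homogeneous, whereas Theorem~\ref{thm:dTG} makes no transitivity assumption --- $G$ is an arbitrary Lie group acting isometrically on an arbitrary $M$, with $\G_i$ discrete and $\G_i\ba M$ compact. For a non-transitive action there is no $K$, no $V_\sigma$, and no realization of sections as $(L^2(\G_i\ba G)\otimes V_\sigma)^K$, so your final argument simply does not apply.

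What you do prove correctly is the homogeneous case: for $M=G/K$, sections of a natural bundle over $\G_i\ba M$ form $(L^2(\G_i\ba G)\otimes V_\sigma)^K$, a natural operator acts there through the right action of $(U(\mathfrak{g})\otimes\operatorname{End}(V_\sigma))^K$, and the unitary $G$-intertwiner $T$ yields $T\otimes\mathrm{Id}$ conjugating $D_1$ into $D_2$. This is Pesce's formulation, it is exactly the mechanism behind the decomposition of $L^2(\Gamma\ba E_\tau)$ in Section~3, and it suffices for the spherical space forms this paper actually studies --- but it is a special case. To recover the general statement one needs the DeTurck--Gordon/B\'erard transplantation: realize a smooth section $f$ over $\G_1\ba M$ as the equivariant function $F_f(\G_1 g)=g^{-1}\cdot f$ on $\G_1\ba G$ with values in \emph{smooth} (not $L^2$) sections over $M$, and transplant by $T\otimes\mathrm{Id}$, using $G$-invariance of $D$ to send eigensections to eigensections of the same eigenvalue, with $T^{-1}$ providing the inverse and hence equality of multiplicities. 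Note that the obvious $L^2$ shortcut degenerates here: an invariant element of $L^2(\G\ba G)\,\hat\otimes\, L^2(E)$ corresponds to a $\G$-invariant section of finite $L^2(E)$-norm, which forces it to vanish when $M$ is noncompact; this is precisely the analytic point your proposal never confronts.
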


One can give a convenient reformulation of the condition of representation equivalence in the theorem. Namely,  if $g \in G$ denote by $C(g, \G_i)$, $C(g, G)$  the centralizers of $g$ in $\G_i$ and $G$ respectively.
Under the conditions above, the quotient
$C(g, \G_i)\backslash C(g, G)$ is compact for $i=1,2$.
One has that $\G_1$, $\G_2$ are representation equivalent in $G$ if and only if, for each $g \in G$,
\begin{equation}
\sum _{{[a]_{\G_1}}\subset {[g]_{G}}} \textrm{vol} (C(a, \G_1)\backslash C(a, G))
= \sum _{{[b]_{\G_2}}\subset {[g]_{G}}} \textrm{vol} (C(b, \G_2)\backslash C(b, G)).
\end{equation}
Here $[g]_G$, $[a]_{\G_1}$ and $[b]_{\G_2}$ denote respectively the conjugacy classes of $g$ in $G$, of $a$ in $\G_1$ and of $b$ in $\G_2$ and the volumes are computed with respect to suitable invariant measures in  $C(g, G)$ and $C(g, \G_i)$.

As a consequence, one obtains the following (see \cite{Wo2})

\begin{coro}
If $G$ is a compact Lie group and the $\G_i$ are finite, then $\G_1$ and $\G_2$ are representation equivalent in $G$ if and only if  $\G_1$, $\G_2$ are almost conjugate in $G$.
\end{coro}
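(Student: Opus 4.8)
The plan is to specialize the reformulation of representation equivalence displayed above to the present setting, where compactness of $G$ and finiteness of the $\G_i$ collapse all the volume factors into elementary counting data. First I would observe that when $\G_i$ is finite, the centralizer $C(a,\G_i)$ is a finite subgroup of the compact group $C(a,G)$, so the relevant volume is simply
\[
\textrm{vol}(C(a,\G_i)\backslash C(a,G)) = \frac{\textrm{vol}(C(a,G))}{|C(a,\G_i)|}.
\]
Moreover, if $a$ and $g$ are $G$-conjugate then $C(a,G)$ and $C(g,G)$ are conjugate closed subgroups of $G$, hence have equal volume for any $G$-invariant measure; I will denote this common value by $v(g)$, which is positive since $C(g,G)$ is a nonempty compact group.

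Next I would group the left-hand sum of the reformulation according to the $\G_1$-conjugacy classes contained in a fixed $[g]_G$. Since those classes $[a]_{\G_1}\subset[g]_G$ partition the finite set $[g]_G\cap\G_1$, and since $|[a]_{\G_1}| = |\G_1|/|C(a,\G_1)|$, the orbit-counting identity gives
\[
\sum_{[a]_{\G_1}\subset[g]_G} \frac{1}{|C(a,\G_1)|}
= \frac{1}{|\G_1|}\sum_{[a]_{\G_1}\subset[g]_G} |[a]_{\G_1}|
= \frac{|[g]_G\cap\G_1|}{|\G_1|}.
\]
Performing the same computation for $\G_2$ and using that the factor $v(g)$ is common to both sides, the criterion for representation equivalence becomes, for every $g\in G$,
\[
\frac{|[g]_G\cap\G_1|}{|\G_1|} = \frac{|[g]_G\cap\G_2|}{|\G_2|}.
\]

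Finally I would dispose of the normalizing denominators. Taking $g$ equal to the identity $e$ gives $|[e]_G\cap\G_i|=1$, so the displayed identity forces $|\G_1|=|\G_2|$; once this is known, the identity reduces to $|[g]_G\cap\G_1| = |[g]_G\cap\G_2|$ for every $g$. This last condition is exactly the assertion that the $\G_1$-elements and the $\G_2$-elements in each $G$-conjugacy class can be matched up, i.e.\ that a $G$-conjugacy-preserving bijection $\G_1\to\G_2$ exists, which is almost conjugacy. The converse is immediate, since such a bijection matches elements class by class and therefore makes the volume criterion hold term by term. The computation is essentially routine; the only points genuinely using the hypotheses are the positivity of $v(g)$, so that it may be cancelled, and the extraction of $|\G_1|=|\G_2|$ from the case $g=e$, which is what reconciles the ratio appearing in the criterion with the plain equality of cardinalities in the definition of almost conjugacy.
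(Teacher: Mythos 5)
Your proof is correct and follows essentially the same route as the paper, which states the corollary as a direct consequence of the displayed volume-sum criterion (citing Wolf) without spelling out the details: you simply specialize that criterion, noting that compactness and finiteness collapse the volumes to $v(g)/|C(a,\G_i)|$, apply orbit--stabilizer counting, and extract $|\G_1|=|\G_2|$ from $g=e$. The only stylistic quibble is the phrase ``term by term'' in the converse --- almost conjugacy matches elements, not $\G_i$-conjugacy classes, so the two sums agree in aggregate via your counting identity rather than termwise --- but this does not affect the argument.
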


H.~Pesce studied the relation between representation theory and isospectrality in several papers (\cite{Pe95}, \cite{Pe96}, \cite{Pe98}). In particular, in \cite{Pe95} he proved that the converse of the Sunada condition is satisfied for manifolds of curvature $\pm 1$.
That is, if $X=S^n$ or $X=H^n$, $G=I(X)$ and $\G_1$ and $\G_2$ are discrete cocompact subgroups of $G$ and if $\G_1\ba X$ and $\G_2\ba X$ are strongly isospectral, then $\G_1$ and $\G_2$ are representation equivalent in $G$. (In the case of $X=\R^n$ this result is proved in \cite{L}.)

Also, in a subsequent paper (\cite{Pe96}), he gives a generalization of the condition of representation equivalence, by introducing the weaker notion of \emph{$\tau$-representation equivalence}, where $\tau$ is a representation of the compact Lie group $K$ that is the generic stabilizer of the action.
When $\tau$ is the trivial representation he calls the discrete subgroups $K$-equivalent. As an application, in the case of spaces of constant curvature he shows that $\G_1$ and $\G_2$ are $K$-equivalent in $G=I(X)$ if and only if $\G_1\ba X$ and $\G_2\ba X$ are $0$-isospectral.
A generalization  was given recently in \cite{LMRrepequiv}, in the same context, for $X=G/K$ of constant curvature  when $\tau=\tau_p$ is the $p$-exterior representation of $K=\Ot(n)$.
We showed that in the elliptic case, for each fixed $p$, $\G_1$ and $\G_2$ are  $\tau_p$-equivalent in $G=I(X)$ if and only if $\G_1\ba X$ and $\G_2\ba X$ are $p$-isospectral.
However, in the flat and hyperbolic cases, we prove that $\G_1$ and $\G_2$ are $\tau_q$ equivalent in $G=I(X)$ for every $0\le q \le p$, if and only if $\G_1\ba X$ and $\G_2\ba X$ are  $q$-isospectral for every  $0\le q \le p$.
Also we gave examples showing that  in the flat case,  $p$-isospectrality is far from implying $\tau_p$-equivalence for each fixed $p$.

To conclude this section, we list a number of representative papers illustrating the construction of strongly isospectral manifolds by means of the Sunada method or its generalizations.
\begin{enumerate}
\item [(i)] Isospectral Riemann surfaces: \cite{Vi}, \cite{Su}, \cite{BT}, \cite{BGG}, \cite{Br96}, \cite{BuLibro}, \cite{Bu86}, \cite{GMW}.
\item [(ii)] Isospectral spherical space forms: \cite{Ik83}, \cite{Gi},  \cite{Wo2}.
\item [(iii)] Isospectral locally symmetric: \cite{Vi}, \cite{Sp}, \cite{McR}.
\item [(iv)] Continuous isospectral families: \cite{GW84}, \cite{GW97}, \cite{Schu95}.
\item [(v)] Isospectral graphs: \cite{Bu88}, \cite{Br96}, \cite{FK}.
\item [(vi)] Isospectral planar domains: \cite{GWW}, \cite{Bu88}, \cite{BCDS}.
\item [(vii)] Isospectral flat manifolds: \cite{DM}, \cite{DR1}, \cite{MRhw}, \cite{MRl}, \cite{LMRcohom}.
\item [(viii)] $\tau$-representation equivalent manifolds: \cite{Pe96}, \cite{Pe98},  \cite{Su02}, \cite{LMRrepequiv}.
\end{enumerate}

For a more complete discussion of the Sunada method we refer to the surveys by C.~Gordon~\cite{Go08}, \cite{Go00}.

In the remaining sections we will discuss several isospectrality situations in the case of spherical space forms, that are not of the strong type, thus they cannot be obtained by the Sunada method.

\section{Spectra of spherical space forms} \label{sec:specspherforms}

In this section we will recall various facts on spectra of spherical space forms. We refer to \cite{IT} for the main basic facts.
We will describe the results  in the language of representation theory of orthogonal groups.
The $n$-dimensional sphere $S^{n}$ is a symmetric space realized as $G/K$ with $G=\SO(n+1)$, $K=\SO(n)$.
If  $\Gamma$ is a finite subset of $\SO(n+1)$ acting freely on $S^n$, then the manifold $\Gamma\ba S^{n}$ is a spherical space form.
We restrict our attention to the odd-dimensional case $n=2m-1$, since the only manifold covered (properly) by $S^{2m}$ is $P\R^{2m}$.

We consider the standard maximal torus $T$ in $\SO(2m)$, with Lie algebra given by
\begin{equation}\label{eq2:h_0}
\mathfrak h_0:=\left\{ H=
\diag\left(
\left[\begin{smallmatrix}0&2\pi \theta_1\\ -2\pi \theta_1&0\end{smallmatrix}\right]
, \dots,
\left[\begin{smallmatrix}0&2\pi \theta_m\\ -2\pi \theta_m&0\end{smallmatrix}\right]
\right)
:\theta\in\R^m
\right\}.
\end{equation}
Its complexification is a Cartan subalgebra
$\mathfrak h$ of $\so(2m,\C)$.
As usual, define $\varepsilon_j\in\mathfrak h^*$ by $\varepsilon_j(H)=-2\pi i\theta_j$ for any $1\leq j\leq m$, $H\in \mathfrak h$.
The weight lattice of $G$ is thus given by $P(\SO(2m))=\bigoplus_{j=1}^m\Z\varepsilon_j$.
We use the standard system of positive roots, thus a weight $\sum_{j=1}^m a_j\varepsilon_j$ is dominant if and only if $a_1\geq\dots\geq a_{m-1}\geq |a_m|$.

Let $\langle\cdot,\cdot\rangle$ be the inner product on $i\mathfrak h_0^*$ so that $\varepsilon_1,\dots,\varepsilon_m$ is an orthonormal basis.
This is the dual of the positive multiple of the Killing form that induces on $S^{2m-1}$ the Riemannian metric with constant curvature equal to one.

In $K=\{g \in \SO(2m): g e_{2m} = e_{2m}\}\subset \SO(2m)$, we take the maximal torus $T_K = T\cap K$, thus the associated Cartan subalgebra $\mathfrak h_K$ can be seen as included in $\mathfrak h$ in the usual way.
Under this convention, the weight lattice of $K$ can be identified with $P(\SO(2m-1))=\bigoplus_{j=1}^{m-1} \Z \varepsilon_j$ and $\sum_{j=1}^{m-1}a_j\varepsilon_j$ is dominant if and only if $a_1\geq\dots \geq a_m\geq0$.

Let $\widehat G$ and $\widehat K$ denote respectively the equivalence classes of unitary irreducible representations of $G$ and $K$ respectively, endowed with invariant inner products.
By the highest weight theorem, the elements in $\widehat G$ (resp.\ $\widehat K$) are in a bijective correspondence with the dominant weights $\Lambda$ of $G$ (resp. $\mu$ of  $K$).
For each $\Lambda$, we denote by $\pi_\Lambda \in \widehat G$  the irreducible representation with highest weight $\Lambda$.
For example, $\pi_{k\varepsilon_1}\in\widehat G$, with highest weight $k\varepsilon_1$, can be realized in the space of complex homogeneous harmonic polynomials of degree $k$, in $m$ variables.

For $(\tau, W_\tau)\in\widehat K$, let $E_\tau$ denote the associated homogeneous vector bundle $E_\tau := G \times_\tau W_\tau \longrightarrow S^{2m-1}$  of $S^{2m-1}$ (see \cite[\S5.2]{Wa}).
The space of $L^2$-sections of $E_\tau$ decomposes as $L^2(E_\tau) \simeq \sum_{\pi\in\widehat G} \; V_\pi\otimes\Hom_K(V_\pi,W_\tau)$, where  $G$ acts in the first variable in the right-hand side.
If $\Gamma$ is a finite subgroup of $G$, the space $\Gamma\ba E_\tau$ is a vector bundle over $\Gamma\ba S^{2m-1}$ with $L^2$-sections given by the $\Gamma$-invariant elements of $L^2(E_\tau)$; thus we have the decomposition
\begin{equation}\label{eq:L^2(GammaE_tau)}
L^2(\Gamma\ba E_\tau) = L^2(E_\tau)^\Gamma \simeq \sum_{\pi\in\widehat G} \; V_\pi^\Gamma\otimes\Hom_K(V_\pi,W_\tau).
\end{equation}

The Laplace operator $\Delta_{\tau,\Gamma}$ acting on smooth sections $\Gamma\ba E_\tau$ can be identified with the action of the Casimir element $C\in U(\so(2m,\C))$ (the universal enveloping algebra of $\so(2m,\C)$).
On each summand $V_\pi^\Gamma\otimes\Hom_K(V_\pi,W_\tau)$, $C$ acts  by the scalar $\lambda(C,\pi) = \langle \Lambda+\rho,\Lambda+\rho\rangle -\langle \rho,\rho\rangle$, where $\Lambda$ is the highest weight of $\pi$ and $\rho = \sum_{j=1}^m (m-j)\varepsilon_j$.
In particular, the multiplicity $d_\lambda(\tau,\Gamma)$ of $\lambda \in\R$ in the spectrum of $\Delta_{\tau,\Gamma}$ equals
\begin{equation}\label{eq:mult_lambda}
d_\lambda(\tau,\Gamma)=
\sum_{\pi\in\widehat G:\, \lambda(C,\pi)=\lambda}
\dim V_\pi^\Gamma \;[\tau:\pi],
\end{equation}
where $[\tau:\pi]=\dim(\Hom_K(V_\pi,W_\tau))$ can be computed by the well known branching law from $G=\SO(2m)$ to $K=\SO(2m-1)$.
That is, if $\tau\in\widehat K$ has highest weight $\mu=\sum_{j=1}^{m-1} b_j\varepsilon_j$ and $\pi\in\widehat G$ has highest weight $\Lambda=\sum_{j=1}^m a_j\varepsilon_j$, then $[\tau:\pi]>0$ if and only if
\begin{equation}\label{eq:entrelazamiento}
a_1\geq b_1\geq a_2\geq b_2\geq\dots\geq a_{m-1}\geq b_{m-1}\geq |a_m|.
\end{equation}
Moreover, the branching is multiplicity free.
Hence, $\widehat G_\tau:=\{\pi\in \widehat G: [\tau:\pi]=1\}$ is the set of $\pi_\Lambda\in\widehat G$ ($\Lambda=\sum_{j=1}^ma_j\varepsilon_j$) such that \eqref{eq:entrelazamiento} holds.

We can now describe the $\tau$-spectrum of any spherical space form $\Gamma\ba S^{2m-1}$.

\begin{thm}\label{thm:tau-spectrum}
Let $\Gamma$ be a finite subgroup of $G=\SO(2m)$ and let $\tau$ be an irreducible representation of $K=\SO(2m-1)$.
Then, $\lambda\in\R$ is an eigenvalue of $\Delta_{\tau,\Gamma}$ if and only if $\lambda=\lambda(C,\pi)$ for some $\pi\in\widehat G_\tau$.
In this case, its multiplicity is given by
$$
d_{\lambda}(\tau,\Gamma) = \sum
    \dim V_{\pi}^\Gamma,
$$
the sum taken over $\pi\in\widehat G_\tau$ such that $\lambda(C,\pi)=\lambda$.
\end{thm}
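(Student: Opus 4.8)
The plan is to obtain Theorem~\ref{thm:tau-spectrum} directly from the isotypic decomposition \eqref{eq:L^2(GammaE_tau)}, the identification of $\Delta_{\tau,\Gamma}$ with the Casimir operator $C$, and the multiplicity-free branching law recalled above. All three ingredients are already in place, so the task is to assemble them and to pin down exactly when a given scalar lies in the spectrum.

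First I would observe that, under $\Delta_{\tau,\Gamma}=C$, each summand $V_\pi^\Gamma\otimes\Hom_K(V_\pi,W_\tau)$ of \eqref{eq:L^2(GammaE_tau)} lies in a single eigenspace of $\Delta_{\tau,\Gamma}$, with eigenvalue $\lambda(C,\pi)=\langle\Lambda+\rho,\Lambda+\rho\rangle-\langle\rho,\rho\rangle$, where $\Lambda$ is the highest weight of $\pi$. Hence the full $\lambda$-eigenspace is the sum of the summands with $\lambda(C,\pi)=\lambda$, and its dimension is
\[
d_\lambda(\tau,\Gamma)=\sum_{\pi\in\widehat G:\,\lambda(C,\pi)=\lambda}\dim V_\pi^\Gamma\,[\tau:\pi],
\]
which is exactly \eqref{eq:mult_lambda}. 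In particular $\lambda$ is an eigenvalue precisely when some summand with $\lambda(C,\pi)=\lambda$ is nonzero, i.e.\ when $\dim V_\pi^\Gamma>0$ and $[\tau:\pi]>0$ for at least one such $\pi$.

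Next I would bring in the branching from $G=\SO(2m)$ to $K=\SO(2m-1)$: it is multiplicity free, so $[\tau:\pi]\in\{0,1\}$, and $[\tau:\pi]=1$ holds exactly when the interlacing inequalities \eqref{eq:entrelazamiento} do, that is, when $\pi\in\widehat G_\tau$. Substituting this into the displayed formula deletes every $\pi\notin\widehat G_\tau$ and turns each surviving factor $[\tau:\pi]$ into $1$, leaving $d_\lambda(\tau,\Gamma)=\sum\dim V_\pi^\Gamma$, summed over $\pi\in\widehat G_\tau$ with $\lambda(C,\pi)=\lambda$. This yields the multiplicity formula and shows simultaneously that every eigenvalue must be of the form $\lambda(C,\pi)$ with $\pi\in\widehat G_\tau$.

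The step I would treat with the most care is the sufficiency half of the ``if and only if''. Lying in $\{\lambda(C,\pi):\pi\in\widehat G_\tau\}$ does not by itself place $\lambda$ in the spectrum, since $\dim V_\pi^\Gamma$ may vanish: for instance $\Gamma=\{\pm\Id\}$ acts on every $\pi_\Lambda$ with $\sum_j a_j$ odd by $-\Id$, so $V_\pi^\Gamma=0$ there and the corresponding scalars never occur. The precise criterion is therefore that $\lambda$ is an eigenvalue iff $d_\lambda(\tau,\Gamma)>0$, with $d_\lambda$ given by the above sum, the set $\widehat G_\tau$ supplying the candidate eigenvalues. A last small point is that distinct highest weights can produce the same Casimir scalar, so the defining sum may genuinely contain several terms, which is exactly why the multiplicity is recorded as a sum over $\pi$.
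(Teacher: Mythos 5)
Your proof is correct and takes essentially the same route as the paper, which presents this theorem as the direct assembly of the decomposition \eqref{eq:L^2(GammaE_tau)}, the identification of $\Delta_{\tau,\Gamma}$ with the Casimir acting by $\lambda(C,\pi)$ as in \eqref{eq:mult_lambda}, and the multiplicity-free branching law \eqref{eq:entrelazamiento} defining $\widehat G_\tau$. Your added caveat --- that $\lambda(C,\pi)$ with $\pi\in\widehat G_\tau$ is only an eigenvalue when some $\dim V_\pi^\Gamma>0$, so the ``if'' direction should be read via $d_\lambda(\tau,\Gamma)>0$ --- is a correct and worthwhile sharpening of the statement's literal wording.
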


If $\tau$ is an irreducible representation of $K=\SO(2m-1)$, then the spaces $\Gamma\ba S^{2m-1}$ and $\Gamma'\ba S^{2m-1}$ are said to be \emph{$\tau$-isospectral} if the Laplace type operators $\Delta_{\tau,\Gamma}$ and $\Delta_{\tau,\Gamma'}$ have the same spectrum.
If $\tau_p$ denotes the irreducible representation of $\SO(2m-1)$ with highest weight $\varepsilon_1+\dots+\varepsilon_p$ for $0\leq p\leq m-1$, then the associated Laplace operator $\Delta_{\tau_p,\Gamma}$ can be identified with the Hodge-Laplace operator $\Delta_p$ acting on $p$-forms of $\Gamma\ba S^{2m-1}$.
As usual, we call \emph{$p$-spectrum} the spectrum of $\Delta_p$ and we write \emph{$p$-isospectral} in place of $\tau_p$-isospectral.
Since $\Gamma\subset \SO(2m)$, then $\Gamma\ba S^{2m-1}$ is always orientable, hence  the $p$-spectrum and the ${2m-1-p}$-spectrum are the same.

We next restate Theorem~\ref{thm:tau-spectrum} for $\tau=\tau_p$.
We first introduce some more notation.
Let $\Lambda_p=\varepsilon_1+\dots+\varepsilon_p$ for $p<m$ and $\Lambda_m^\pm = \varepsilon_1+\dots+\varepsilon_{m-1}\pm\varepsilon_m$.
Denote by $\pi_{k,p}$ (resp.\ $\pi_{k,m}^\pm$) the irreducible representation with highest weight $k\varepsilon_1+\Lambda_p$ if $p<m$ (resp.\ $k\varepsilon_1+\Lambda_m^\pm$).
It is easy to check that
\begin{equation}
\widehat G_{\tau_p} =
\begin{cases}
\{\triv \}\cup\{\pi_{k,1}:k\in\N_0\}= \{\pi_{k\varepsilon_1}:k\in\N_0\}
    &\text{ if } p=0,\\
\{\pi_{{k,p}}, \pi_{{k,p+1}} : k\in\N_0\}
    &\text{ if } 1\leq p\leq m-2,\\
\{\pi_{{k,m-1}}, \pi_{{k,m}}^\pm : k\in\N_0\}
    &\text{ if } p=m-1.
\end{cases}
\end{equation}
Here $\triv$ denotes the trivial representation $\pi_{0}$ of $\SO(2m)$.
We now set $\mathcal E_0 =\{0\}$ and
\begin{equation}\label{eq:Ep}
\mathcal E_p = \{\lambda_k:=\lambda(C,\pi_{{k,p}})= k^2+k(2m-2)+(p-1)(2m-1-p): k\in\N_0\}
\end{equation}
for $1\leq p\leq m$.

\begin{thm}\label{thm:p-spectrum}
Let $\Gamma$ be a finite subgroup of $G=\SO(2m)$ and let $0\leq p\leq m-1$.
If $\lambda\in\R$ is an eigenvalue of $\Delta_{\tau_p,\Gamma}$ then $\lambda\in\mathcal E_p\cup\mathcal E_{p+1}$.
Its multiplicity is given by
$$
d_\lambda(\tau_p,\Gamma) =
\begin{cases}
\dim V_{\pi_{{k,p}}}^\Gamma & \text{if }\lambda=\lambda_k\in\mathcal E_p, \\
\dim V_{\pi_{{k,p+1}}}^\Gamma & \text{if }\lambda=\lambda_k\in\mathcal E_{p+1} .
\end{cases}
$$
In particular, when $p=0$, the eigenvalues of the Laplace-Beltrami operator $\Delta_{\tau_0,\Gamma}$ lie in the set $\{k^2+k(2m-2): k\in\N_0\}$ and $d_\lambda(\tau_0,\Gamma) = \dim V_{\pi_{k\varepsilon_1}}^\Gamma$ if $\lambda=k^2+k(2m-2)$.
\end{thm}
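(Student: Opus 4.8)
The plan is to read Theorem~\ref{thm:p-spectrum} as the specialization of Theorem~\ref{thm:tau-spectrum} to $\tau=\tau_p$, so that the only real work is to make the two inputs of that theorem explicit: the set $\widehat G_{\tau_p}$ and the Casimir values $\lambda(C,\pi)$ for $\pi\in\widehat G_{\tau_p}$. First I would record the description of $\widehat G_{\tau_p}$ displayed just before the statement, which is read off directly from the branching condition \eqref{eq:entrelazamiento}: since $\tau_p$ has highest weight $\mu=\varepsilon_1+\dots+\varepsilon_p$, a representation $\pi_\Lambda$ has $[\tau_p:\pi_\Lambda]=1$ precisely when $\Lambda$ interlaces $\mu$, and an elementary inspection of those inequalities shows that the admissible $\Lambda$ split into the two families $k\varepsilon_1+\Lambda_p$ and $k\varepsilon_1+\Lambda_{p+1}$, with the trivial representation $\triv$ adjoined when $p=0$ and the pair $\pi_{k,m}^{\pm}$ replacing the second family when $p=m-1$.

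Next I would carry out the Casimir computation. Using $\lambda(C,\pi_\Lambda)=\langle\Lambda+\rho,\Lambda+\rho\rangle-\langle\rho,\rho\rangle=\langle\Lambda,\Lambda\rangle+2\langle\Lambda,\rho\rangle$ with $\rho=\sum_{j=1}^m(m-j)\varepsilon_j$ and $\varepsilon_1,\dots,\varepsilon_m$ orthonormal, a direct evaluation on the highest weight of $\pi_{k,p}$ yields the closed form $k^2+k(2m-2)+(p-1)(2m-1-p)$ recorded in \eqref{eq:Ep}, i.e.\ it reproduces $\mathcal E_p$; the same computation on $\pi_{k,p+1}$ produces $\mathcal E_{p+1}$. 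At the top degree one checks that $\pi_{k,m}^{+}$ and $\pi_{k,m}^{-}$ share a common Casimir value, since their highest weights differ only in the sign of the $\varepsilon_m$-coefficient while $\rho$ has vanishing $\varepsilon_m$-coordinate, so that coordinate enters only through its square. This step is routine algebra, and it is where the explicit shape of $\mathcal E_p$ is pinned down.

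With these in hand, Theorem~\ref{thm:tau-spectrum} immediately gives that every eigenvalue of $\Delta_{\tau_p,\Gamma}$ equals $\lambda(C,\pi)$ for some $\pi\in\widehat G_{\tau_p}$, hence lies in $\mathcal E_p\cup\mathcal E_{p+1}$, with multiplicity $\sum\dim V_\pi^\Gamma$ over the contributing $\pi$. The main point, and the only genuinely nontrivial step, is to show this sum collapses to a single term, that is, each eigenvalue is realized by exactly one $\pi\in\widehat G_{\tau_p}$. Writing $f(k)=k^2+(2m-2)k$ and $c_p=(p-1)(2m-1-p)$, within one family $k\mapsto f(k)+c_p$ is strictly increasing, hence injective; the remaining task is to prove $\mathcal E_p\cap\mathcal E_{p+1}=\varnothing$. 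I would do this by a gap estimate: one has $c_{p+1}-c_p=2(m-p)-1$, whereas the smallest gap between distinct values of $f$ on $\N_0$ is $f(1)-f(0)=2m-1$; since $0<2(m-p)-1\le 2m-3<2m-1$ for $p\ge1$, no equality $f(k)+c_p=f(j)+c_{p+1}$ can occur, so the two families are \emph{disjoint} and the multiplicity is the single term stated in each case. Finally the boundary cases are handled separately: for $p=0$ one has $\mathcal E_0=\{0\}\subset\mathcal E_1$ and $\triv$ merges into the $\{\pi_{k\varepsilon_1}\}$ family, giving the asserted eigenvalue set $\{k^2+k(2m-2)\}$ with multiplicity $\dim V_{\pi_{k\varepsilon_1}}^\Gamma$, while for $p=m-1$ the two half-representations $\pi_{k,m}^{\pm}$ carry the same eigenvalue, so $\dim V_{\pi_{k,m}}^\Gamma$ there must be read as the sum of their contributions.
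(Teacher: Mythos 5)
Your proposal is correct and follows essentially the same route as the paper: there Theorem~\ref{thm:p-spectrum} is not proved separately at all, but is presented exactly as you read it, namely as the specialization of Theorem~\ref{thm:tau-spectrum} to $\tau=\tau_p$, with the two inputs (the branching description of $\widehat G_{\tau_p}$ and the Casimir values collected in $\mathcal E_p$, $\mathcal E_{p+1}$) set up in the surrounding text. The genuine added value in your write-up is the disjointness step: the paper never proves that $\mathcal E_p\cap\mathcal E_{p+1}=\emptyset$ for $p\geq 1$ (it is asserted without argument only after the theorem, where it feeds into Corollary~\ref{cor:isosp-dimV^Gamma}), whereas you correctly identify it, together with the injectivity of $k\mapsto\lambda_k$ within each family, as what makes the two-case multiplicity formula collapse to a single term; your gap estimate ($c_{p+1}-c_p=2(m-p)-1$ lies strictly between $0$ and $2m-1$, the least positive increment of $f(k)=k^2+(2m-2)k$ on $\N_0$) is a complete and correct proof of this, and your reading of the boundary cases $p=0$ and $p=m-1$ (where $\dim V_{\pi_{k,m}}^\Gamma$ must mean $\dim V_{\pi_{k,m}^+}^\Gamma+\dim V_{\pi_{k,m}^-}^\Gamma$) is the right one.

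One warning about the step you call routine algebra: it conceals an indexing discrepancy that is in fact present in the paper itself. With the paper's literal definition of $\pi_{k,p}$ (highest weight $k\varepsilon_1+\Lambda_p$, whose \emph{first} coordinate is $k+1$), the direct evaluation gives $\lambda(C,\pi_{k,p})=(k+m)^2-(m-p)^2=(k+p)(k+2m-p)$, which is the expression in \eqref{eq:Ep} evaluated at $k+1$, not at $k$; the formula printed in \eqref{eq:Ep}, which factors as $(k+p-1)(k+2m-1-p)$, is the Casimir value of the representation whose highest weight has first coordinate $k$. So if you actually carry out the computation with the stated highest weight you will not reproduce \eqref{eq:Ep}; one must shift $k$ by one either in the definition of $\pi_{k,p}$ or in \eqref{eq:Ep} (the test case $p=1$, $k=0$ makes this visible: $\pi_{0,1}$ is the standard representation, with Casimir value $2m-1$, while \eqref{eq:Ep} gives $0$). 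This off-by-one is a typo inherited from the paper and does not affect the substance of your argument, but a careful write-up should fix the normalization of $\pi_{k,p}$ explicitly before claiming that the evaluation "yields" \eqref{eq:Ep}.
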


From Theorem~\ref{thm:p-spectrum} and the fact that $\mathcal E_p\cap\mathcal E_{p+1}=\emptyset$ when $p>0$, we obtain the following characterizations.

\begin{coro}\label{cor:isosp-dimV^Gamma}
Let $\Gamma$ and $\Gamma'$ be finite subgroups of $\SO(2m)$.
\begin{enumerate}
  \item[(i)] $\Gamma\ba S^{2m-1}$ and $\Gamma'\ba S^{2m-1}$ are $0$-isospectral if and only if $\dim V_{\pi_{k\varepsilon_1}}^\Gamma\!\! = \dim V_{\pi_{k\varepsilon_1}}^{\Gamma'}$ for every $k\in\N$.
  \item[(ii)] If $1\le p\le m-1$, $\Gamma\ba S^{2m-1}$ and $\Gamma'\ba S^{2m-1}$ are $p$-isospectral if and only if $$\dim V_{\pi_{{k,p}}}^\Gamma\! = \dim V_{\pi_{{k,p}}}^{\Gamma'} \textrm{ and } \dim V_{\pi_{{k,p+1}}}^\Gamma = \dim V_{\pi_{{k,p+1}}}^{\Gamma'}$$ for every $k\in\N$.
  \item[(iii)] $\Gamma\ba S^{2m-1}$ and $\Gamma'\ba S^{2m-1}$ are $p$-isospectral for all $p$ if and only if $\dim V_{\pi_{{k,p}}}^\Gamma = \dim V_{\pi_{{k,p}}}^{\Gamma'}$ for every $k\in\N$ and every $1\leq p\leq m-1$.
\end{enumerate}
\end{coro}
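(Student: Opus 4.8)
The plan is to derive all three equivalences directly from Theorem~\ref{thm:p-spectrum}, using two elementary structural facts about the sets $\mathcal E_p$. First, for each fixed $p$ the map $k\mapsto\lambda_k$ is strictly increasing on $\N_0$ (the consecutive difference $\lambda_{k+1}-\lambda_k=2k+2m-1$ is positive), so an eigenvalue lying in a given $\mathcal E_p$ determines $k$ uniquely; consequently, matching the two $\tau$-spectra along $\mathcal E_p$ is the same as matching, for every $k$, the corresponding multiplicity. Second, as recalled just before the statement, $\mathcal E_p\cap\mathcal E_{p+1}=\emptyset$ when $p>0$, so the two families of eigenvalues appearing in Theorem~\ref{thm:p-spectrum} never collide. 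With these in hand, each equivalence reduces to bookkeeping on the multiplicity functions $d_\lambda(\tau_p,\cdot)$.

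For part (i) I would specialize Theorem~\ref{thm:p-spectrum} to $p=0$: the eigenvalues lie in $\{k^2+k(2m-2):k\in\N_0\}$, injectively indexed by $k$, with multiplicity $\dim V_{\pi_{k\varepsilon_1}}^\Gamma$. The only value needing a separate word is $k=0$, where $\pi_{0}=\triv$ and $\dim V_{\triv}^\Gamma=1$ for every $\Gamma$, since the constants are always invariant; this term is therefore automatically matched and the condition drops to $k\in\N$. For part (ii), with $1\le p\le m-1$, Theorem~\ref{thm:p-spectrum} splits the $\tau_p$-spectrum over the disjoint union $\mathcal E_p\cup\mathcal E_{p+1}$, with multiplicity $\dim V_{\pi_{k,p}}^\Gamma$ on $\mathcal E_p$ and $\dim V_{\pi_{k,p+1}}^\Gamma$ on $\mathcal E_{p+1}$. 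Because the two pieces occupy disjoint eigenvalues and each is injectively indexed by $k$, equality of the full $\tau_p$-spectra of $\Gamma\ba S^{2m-1}$ and $\Gamma'\ba S^{2m-1}$ is equivalent to the simultaneous equalities of both multiplicity sequences, which is exactly the asserted condition.

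For part (iii) I would first invoke orientability: since the $p$- and $(2m-1-p)$-spectra coincide, being $p$-isospectral for all $p$ is equivalent to being $p$-isospectral for every $0\le p\le m-1$. Taking the conjunction of (i) (for $p=0$) and (ii) (for $1\le p\le m-1$), the generic labels $\pi_{k,p}$ with $1\le p\le m-1$ appear as the lower and upper labels of adjacent degrees and are matched; the one remaining ingredient is the top degree $p=m-1$, whose upper family $\mathcal E_m$ forces, through Theorem~\ref{thm:p-spectrum}, the matching of the pooled quantity $\dim V_{\pi_{k,m}^+}^\Gamma+\dim V_{\pi_{k,m}^-}^\Gamma$. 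Thus proving (iii) amounts to checking that this pooled top condition is already subsumed by the equalities for $1\le p\le m-1$.

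The step I expect to be the true obstacle is precisely this collapse at the top. At the bottom everything is harmless: the trivial representation contributes the eigenvalue $0$ with forced multiplicity $1$. At the top, however, the half-top representations $\pi_{k,m}^\pm$ enter only through the degree $p=m-1$, and a priori their individual invariant dimensions need not agree (conjugation by the outer reflection interchanges $\pi_{k,m}^+$ and $\pi_{k,m}^-$ but changes $\Gamma$), so only their sum is spectrally visible. I would therefore spend most of the effort showing that the data $\{\dim V_{\pi_{k,p}}^\Gamma:1\le p\le m-1\}$ already pins down $\dim V_{\pi_{k,m}^+}^\Gamma+\dim V_{\pi_{k,m}^-}^\Gamma$, attacking it by combining the branching law \eqref{eq:entrelazamiento} with the Poincar\'e-duality identification of the $(m-1)$- and $m$-spectra; this is the delicate point on which the clean range $1\le p\le m-1$ in the statement ultimately rests.
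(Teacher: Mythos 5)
Your handling of parts (i) and (ii) is correct and is precisely the paper's (implicit) argument: Theorem~\ref{thm:p-spectrum}, the strict monotonicity of $k\mapsto\lambda_k$ within each $\mathcal E_p$, the disjointness $\mathcal E_p\cap\mathcal E_{p+1}=\emptyset$ for $p>0$, and the normalization $\dim V_{\triv}^\Gamma=1$ reduce both statements to bookkeeping on multiplicity sequences, exactly as you do.

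For part (iii), however, your proof is incomplete, and the gap is the one you yourself flag: you never show that the pooled quantity $\dim V_{\pi_{k,m}^+}^\Gamma+\dim V_{\pi_{k,m}^-}^\Gamma$ is pinned down by the equalities for $1\le p\le m-1$, and the route you propose cannot close this. Poincar\'e duality is exhausted in your first step: the identification of the $(m-1)$- and $m$-form spectra comes from the $K$-isomorphism $\bigwedge^{m-1}\C^{2m-1}\cong\bigwedge^{m}\C^{2m-1}$, which you already used to reduce ``all $p$'' to ``$0\le p\le m-1$''; it produces no further relation, and in particular does not express the $\mathcal E_m$-part of the $\tau_{m-1}$-spectrum in terms of lower data. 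The branching law \eqref{eq:entrelazamiento} is likewise already fully absorbed into Theorem~\ref{thm:p-spectrum}. Note that your hypotheses for $1\le p\le m-1$ are equivalent to $p$-isospectrality for $0\le p\le m-2$ only, so the missing implication is exactly ``$p$-isospectral for all $p\le m-2$ forces $(m-1)$-isospectral''; this is not a formal consequence of any multiplicity bookkeeping. Already for $m=2$ it is the assertion that $0$-isospectral $3$-dimensional spherical space forms are $1$-isospectral, which is known only through the rigidity theorem of Ikeda--Yamamoto (isospectral implies isometric in dimension $3$, \cite{IY}, \cite{Y}), not through Theorem~\ref{thm:p-spectrum}. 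The resolution is notational rather than a hard lemma: Theorem~\ref{thm:p-spectrum} and part (ii) at $p=m-1$ already require interpreting $\pi_{k,m}$ as the pooled representation $\pi_{k,m}^+\oplus\pi_{k,m}^-$, and with this convention the characterization in (iii) should run over $1\le p\le m$, as in \cite{LMRhodge}. With the pooled $p=m$ equalities included in the list, (iii) follows by exactly the bookkeeping you used for (i) and (ii); without them, no argument of the kind you outline (nor the paper's one-line justification) can suffice, so your diagnosis of where the difficulty sits is accurate, but the difficulty should be removed by restoring the $p=m$ condition rather than attacked head-on.
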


\section{The work of Ikeda}\label{sec:Ikeda}

In this section we will give a summary of Ikeda's important work on isospectral spherical space forms.
Our notation will somewhat differ from Ikeda's in that we use the language of representation theory introduced in the previous section.

Generating functions are a main tool in Ikeda's work.
One can encode the $0$-spectrum of a space $\Gamma\ba S^{2m-1}$ in the function
$$
F_\Gamma^0(z) = \sum_{k\geq 1}\dim {V_{\pi_{k\varepsilon_1}}^\Gamma}\,z^k.
$$
In light of Corollary~\ref{cor:isosp-dimV^Gamma}~(i), $\Gamma\ba S^{2m-1}$ and $\Gamma'\ba S^{2m-1}$ are $0$-isospectral if and only if $F_{\Gamma}^0(z) = F_{\Gamma'}^0(z)$.
Ikeda proved in \cite[Thm.~2.2]{Ik80b} that $F_{\Gamma}^0(z)$ converges for $|z|<1$ to the rational function
\begin{equation}\label{eq:F^0converges}
F_{\Gamma}^0(z) = \frac{1}{|\Gamma|} \sum_{\gamma\in\Gamma} \frac{1-z^2}{\det(1-z\gamma)}-\frac{1}{|\Gamma|}.
\end{equation}
Here $\det(1-z\gamma)$ stands for $\det(\op{Id}_{2m}-z\gamma)=\prod_\lambda (1-z\lambda)$, where $\lambda$ runs over the eigenvalues of $\gamma$.
Note that $\det(1-z\gamma)=\det(z-\gamma)$ for any $\gamma\in\SO(2m)$, since for any $\lambda$ an eigenvalue of $\gamma$ one has $|\lambda|=1$ and   $\overline\lambda$ is also an eigenvalue.

He observed (see \cite[Corollary~2.3]{Ik80b}) that  \eqref{eq:F^0converges} implies that if $\Gamma$ and $\Gamma'$
are almost conjugate subgroups of $\SO(2m)$ then $\Gamma\ba S^{2m-1}$ and $\Gamma'\ba S^{2m-1}$ are $0$-isospectral.
This result can be viewed as a predecessor of Sunada's method.
In \cite{Ik83}, Ikeda constructed explicitly non-isometric isospectral spherical space forms by using this method.
These pairs are always strongly isospectral and have noncyclic fundamental group.
P.~Gilkey~\cite{Gi} independently found very similar examples.
Later, J.~A.~Wolf~\cite{Wo2} made a step in the determination of all strongly isospectral spherical space forms by using the classification in \cite{Wo}.
In what follows, we will focus our interest on isospectral spherical space forms that are not strongly isospectral.

Ikeda in \cite{Ik88} encoded, for any $p\ge 1$, the $p$-spectrum of a spherical space form $\Gamma\ba S^{2m-1}$ by means of generating functions.
He defined, as a generalization of  $F_\Gamma^0(z)$ the function
\begin{equation}\label{eq:F^p}
  F_\Gamma^p(z) = \sum_{k\geq 0}\dim {V_{\pi_{{k,p+1}}}^\Gamma}\,z^k.
\end{equation}
Although $F_\Gamma^p(z)$ does not have information on all of the $p$-spectrum, by Theorem~\ref{thm:p-spectrum}, the $p$-spectrum is determined by $F_\Gamma^p(z)$ and $F_\Gamma^{p-1}(z)$ together.
In particular, $\Gamma\ba S^{2m-1}$ and $\Gamma'\ba S^{2m-1}$
are $p$-isospectral if and only if $F_{\Gamma}^{p-1}(z) = F_{\Gamma'}^{p-1}(z)$ and $F_{\Gamma}^{p}(z) = F_{\Gamma'}^{p}(z)$ by Corollary~\ref{cor:isosp-dimV^Gamma}~(ii).

He proved, by using a convenient realization of the representation $V_{\pi_{k,p}}$, the following neat formula (see \cite[p.~394]{Ik88}):
\begin{equation}\label{eq:neat}
F_\Gamma^p(z) = (-1)^{p+1} z^{-p} + \frac{1}{|\Gamma|} \sum_{k=0}^p (-1)^{p-k} (z^{k-p}-z^{p-k+2})\sum_{\gamma\in\Gamma} \frac{\chi^{k}(\gamma)}{\det(z-\gamma)}.
\end{equation}
Here, $\chi^k$ denotes the character of the $k$-exterior representation $\bigwedge^k(\C^{2m})$ of $\SO(2m)$.
Set $\widetilde F_\Gamma^k(z)= \sum_{\gamma\in\Gamma} \frac{\chi^{k}(\gamma)}{\det(z-\gamma)}$ for each $0\leq k\leq m-1$.
As a direct consequence of \eqref{eq:neat} he obtains the following result (\cite[Prop.~2.4]{Ik88}).

\begin{prop}\label{prop:p_0-isosp}
Let $p_0\in\Z$,  $0\leq p_0\leq m-1$. Two spherical space forms $\Gamma\ba S^{2m-1}$ and $\Gamma'\ba S^{2m-1}$
are $p$-isospectral for all $0\leq p\leq p_0$ if and only if $\widetilde F_{\Gamma}^p(z) = \widetilde F_{\Gamma'}^p(z)$ for every $0\leq p\leq p_0$.
\end{prop}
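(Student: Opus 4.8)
The plan is to prove the equivalence in two stages: first replacing $p$-isospectrality by equality of the auxiliary functions $F_\Gamma^p$, and then converting equality of the $F_\Gamma^p$ into equality of the $\widetilde F_\Gamma^p$ by inverting the neat formula \eqref{eq:neat}, which expresses the former as a triangular linear combination of the latter.

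For the first stage I would argue purely formally from Corollary~\ref{cor:isosp-dimV^Gamma}. Since $F_\Gamma^p$ is by \eqref{eq:F^p} the generating function of the numbers $\dim V_{\pi_{k,p+1}}^\Gamma$, part~(i) says that $0$-isospectrality is equivalent to $F_\Gamma^0=F_{\Gamma'}^0$, while part~(ii) says that for $1\le p\le m-1$ the forms are $p$-isospectral exactly when $F_\Gamma^{p-1}=F_{\Gamma'}^{p-1}$ and $F_\Gamma^{p}=F_{\Gamma'}^{p}$. Reading these conditions for $p=0,1,\dots,p_0$, the consecutive pairs overlap, so the full list of isospectrality requirements is equivalent to the single family $F_\Gamma^p=F_{\Gamma'}^p$, $0\le p\le p_0$. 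Thus it suffices to show that this family is equivalent to the family $\widetilde F_\Gamma^p=\widetilde F_{\Gamma'}^p$, $0\le p\le p_0$.

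For the second stage I would read \eqref{eq:neat} as $F_\Gamma^p = (-1)^{p+1}z^{-p} + \frac{1}{|\Gamma|}\sum_{k=0}^p c_{p,k}\,\widetilde F_\Gamma^k$ with $c_{p,k}=(-1)^{p-k}(z^{k-p}-z^{p-k+2})$ independent of $\Gamma$. Viewed over the field of rational functions, the matrix $(c_{p,k})_{0\le k,p\le p_0}$ is lower triangular with diagonal entries $c_{p,p}=1-z^2\neq0$, hence invertible; so the affine map $(\widetilde F_\Gamma^0,\dots,\widetilde F_\Gamma^{p_0})\mapsto(F_\Gamma^0,\dots,F_\Gamma^{p_0})$, whose linear part is $\frac{1}{|\Gamma|}(c_{p,k})$ and whose translation term $((-1)^{p+1}z^{-p})_p$ is the same for $\Gamma$ and $\Gamma'$, is invertible once the scalar $|\Gamma|$ is known. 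Consequently, provided $|\Gamma|=|\Gamma'|$, the tuples $(F_\Gamma^p)$ and $(\widetilde F_\Gamma^p)$ determine each other, and equality of one family is equivalent to equality of the other.

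It remains to secure $|\Gamma|=|\Gamma'|$, which I expect to be the only genuinely non-formal point, since otherwise the two sides of \eqref{eq:neat} would differ by the scalar factor $|\Gamma'|/|\Gamma|$. If the $\widetilde F_\Gamma^p$ agree then in particular $\widetilde F_\Gamma^0=\widetilde F_{\Gamma'}^0$, and comparing leading terms as $z\to\infty$ in $\widetilde F_\Gamma^0(z)=\sum_{\gamma\in\Gamma}\det(z-\gamma)^{-1}=|\Gamma|\,z^{-2m}+O(z^{-2m-1})$ yields $|\Gamma|=|\Gamma'|$. In the other direction, $p$-isospectrality for $p=0$ forces equal volumes, and since $\Gamma$ acts freely one has $\mathrm{vol}(\Gamma\ba S^{2m-1})=\mathrm{vol}(S^{2m-1})/|\Gamma|$, again giving $|\Gamma|=|\Gamma'|$. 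With this in hand the $\frac{1}{|\Gamma|}$ prefactor cancels, the triangular correspondence of the previous paragraph establishes $\{F_\Gamma^p=F_{\Gamma'}^p\}_{p\le p_0}\iff\{\widetilde F_\Gamma^p=\widetilde F_{\Gamma'}^p\}_{p\le p_0}$, and chaining with the first stage proves the proposition.
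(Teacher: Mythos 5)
Your proof is correct and follows essentially the same route as the paper: reduce $p$-isospectrality for all $0\le p\le p_0$ to equality of the functions $F_\Gamma^p$ via Corollary~\ref{cor:isosp-dimV^Gamma}, then pass to the $\widetilde F_\Gamma^p$ by inverting the triangular relation \eqref{eq:neat}, whose diagonal coefficients $1-z^2$ are nonzero. Your explicit verification that $|\Gamma|=|\Gamma'|$ (via the $z\to\infty$ asymptotics of $\widetilde F_\Gamma^0$ in one direction and equality of volumes in the other) fills in a detail the paper leaves implicit, and both arguments are valid.
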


By using Proposition~\ref{prop:p_0-isosp}, Ikeda also was able to characterize spherical space forms that are $p$-isospectral for all $p$.
If $w$ is an indeterminate, one can check that $\sum_{k=0}^{2m} (-1)^k \chi^{k}(\gamma) \, w^k= \det(w-\gamma)$, thus
\begin{equation}\label{eq:allpgenerating}
\QQ_\Gamma(w,z):=
\sum_{k=0}^{2m} \;(-1)^k \; \widetilde F_\Gamma^k(z) \; w^k =
\sum_{\gamma\in\Gamma} \frac{\det(w-\gamma)}{\det(z-\gamma)}.
\end{equation}
Therefore one has the following characterization (see \cite[Thm.~2.5]{Ik88}).

\begin{thm}\label{thm:allpIkeda}
Two spherical space forms $\Gamma\ba S^{2m-1}$ and $\Gamma'\ba S^{2m-1}$ are $p$-isospectral for all $p$ if and only if
$\QQ_{\Gamma}(w,z)=\QQ_{\Gamma'}(w,z)$.
\end{thm}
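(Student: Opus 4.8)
The plan is to route both sides of the equivalence through the auxiliary functions $\widetilde F_\Gamma^k(z)$, $0\le k\le 2m$, and reduce the statement to a comparison of their coefficients. By \eqref{eq:allpgenerating}, $\QQ_\Gamma(w,z)$ is a polynomial in $w$ of degree $2m$ whose coefficient of $w^k$ is $(-1)^k\,\widetilde F_\Gamma^k(z)$; comparing coefficients of powers of $w$, the identity $\QQ_\Gamma(w,z)=\QQ_{\Gamma'}(w,z)$ is therefore equivalent to $\widetilde F_\Gamma^k(z)=\widetilde F_{\Gamma'}^k(z)$ for every $0\le k\le 2m$. On the other side, since each $\Gamma\ba S^{2m-1}$ is orientable its $p$-spectrum and $(2m-1-p)$-spectrum coincide, so being $p$-isospectral for all $p$ is the same as being $p$-isospectral for all $0\le p\le m-1$; by Proposition~\ref{prop:p_0-isosp} with $p_0=m-1$ the latter is equivalent to $\widetilde F_\Gamma^p(z)=\widetilde F_{\Gamma'}^p(z)$ for every $0\le p\le m-1$. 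Thus the theorem reduces to showing that equality of the $\widetilde F^k$ on the lower range $0\le k\le m-1$ already forces equality on the whole range $0\le k\le 2m$.

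One implication being immediate, I would focus on propagating equality from the lower half to the full range. The first ingredient is a symmetry of each $\widetilde F_\Gamma^k$: for $\gamma\in\SO(2m)$ the eigenvalues come in conjugate pairs on the unit circle and $\det\gamma=1$, whence the elementary symmetric functions satisfy $\chi^{2m-k}(\gamma)=\chi^{k}(\gamma)$ (equivalently, $\det(w-\gamma)$ is palindromic in $w$ of degree $2m$). Dividing by $\det(z-\gamma)$ and summing over $\gamma$ gives $\widetilde F_\Gamma^{2m-k}(z)=\widetilde F_\Gamma^{k}(z)$. Hence the assumed equalities for $0\le k\le m-1$ automatically yield the equalities for $m+1\le k\le 2m$, and only the middle index $k=m$ is left.

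To pin down the middle index I would use the diagonal evaluation $\QQ_\Gamma(z,z)=\sum_{\gamma\in\Gamma}1=|\Gamma|$. Substituting $w=z$ in \eqref{eq:allpgenerating}, grouping the terms $k$ and $2m-k$, and using $(-1)^{2m-k}=(-1)^k$ with the symmetry above, one gets
\begin{equation*}
(-1)^m\,\widetilde F_\Gamma^m(z)\,z^m=|\Gamma|-\sum_{k=0}^{m-1}(-1)^k\,\widetilde F_\Gamma^k(z)\,(z^k+z^{2m-k}),
\end{equation*}
so $\widetilde F_\Gamma^m(z)$ is completely determined by $|\Gamma|$ and the functions $\widetilde F_\Gamma^k(z)$ with $0\le k\le m-1$. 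Moreover $|\Gamma|=\lim_{z\to\infty}z^{2m}\,\widetilde F_\Gamma^0(z)$, so the equality $\widetilde F_\Gamma^0=\widetilde F_{\Gamma'}^0$ already gives $|\Gamma|=|\Gamma'|$; feeding this and the lower-range equalities into the displayed relation yields $\widetilde F_\Gamma^m=\widetilde F_{\Gamma'}^m$, which closes the gap and proves the theorem.

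The one genuinely delicate point is precisely the middle index $k=m$: the palindromic symmetry is trivial there and supplies no information, so the extra scalar relation coming from $\QQ_\Gamma(z,z)=|\Gamma|$ (equivalently, the spectral invariance of the order $|\Gamma|$) is indispensable. The remaining steps are routine bookkeeping with Proposition~\ref{prop:p_0-isosp}, Poincar\'e duality, and coefficient comparison in \eqref{eq:allpgenerating}.
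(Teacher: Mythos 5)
Your proposal is correct, and its skeleton---routing both sides through the family $\widetilde F_\Gamma^k(z)$ via Proposition~\ref{prop:p_0-isosp} with $p_0=m-1$ (using orientability to reduce ``all $p$'' to $0\le p\le m-1$) and then comparing coefficients of $w$ in \eqref{eq:allpgenerating}---is exactly the route the paper takes. The difference is one of completeness: the paper, following Ikeda, states the theorem as an immediate consequence of Proposition~\ref{prop:p_0-isosp} and the identity \eqref{eq:allpgenerating}, passing silently over the fact that the proposition only controls $\widetilde F_\Gamma^k$ for $0\le k\le m-1$, while $\QQ_\Gamma(w,z)$ involves all $0\le k\le 2m$. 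You correctly isolate this as the real content of the forward implication and supply the two facts that close it: the palindromic symmetry $\chi^{2m-k}=\chi^{k}$ on $\SO(2m)$ (valid because $\det\gamma=1$ and the eigenvalue multiset is closed under inversion), which gives $\widetilde F_\Gamma^{2m-k}=\widetilde F_\Gamma^{k}$ and disposes of the range $m+1\le k\le 2m$; and the diagonal evaluation $\QQ_\Gamma(z,z)=|\Gamma|$ together with $|\Gamma|=\lim_{z\to\infty}z^{2m}\widetilde F_\Gamma^0(z)$ (each $\det(z-\gamma)$ being monic of degree $2m$), which pins down the middle coefficient $\widetilde F_\Gamma^m$ in terms of data already known to agree. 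Your grouped identity for $(-1)^m\widetilde F_\Gamma^m(z)z^m$ checks out, and the reverse implication is, as you say, immediate from coefficient comparison plus Poincar\'e duality. In short, your write-up is a faithful but more rigorous version of the paper's argument: what it buys is an actual proof of the step the survey compresses into ``therefore.''
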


In a similar way as in our comment after \eqref{eq:F^0converges}, Theorem~\ref{thm:allpIkeda} implies that almost conjugate subgroups yield manifolds that are $p$-isospectral for all $p$ (see \cite[Thm.~2.7]{Ik88}).

The previous results are valid for generating functions of arbitrary spherical space forms.
As an application, Ikeda proved the existence of many families of non-isometric $0$-isospectral lens spaces.
Since Pesce~\cite{Pe95} has proved that strongly isospectral lens spaces are necessarily isometric (see also \cite[Prop.~7.2]{LMRhodge}), it turns out that these examples cannot be obtained by Sunada's method.

From now on we will focus on lens spaces, that is, spherical space forms with cyclic fundamental group.
They can be described as follows. For each $q\in\N$ and $s_1,\dots,s_m\in\Z$ coprime to $q$,  denote
\begin{equation}\label{eq3:L(q;s)}
L(q;s_1,\dots,s_m) = \langle\gamma\rangle \ba S^{2m-1}
\end{equation}
where
\begin{equation}\label{eq:gamma}
\gamma=
\diag\left(
\left[\begin{smallmatrix}\cos(2\pi{s_1}/q)&\sin(2\pi{s_1}/q) \\ -\sin(2\pi{s_1}/q)&\cos(2\pi{s_1}/q)
\end{smallmatrix}\right]
,\dots,
\left[\begin{smallmatrix}\cos(2\pi{s_m}/q)&\sin(2\pi{s_m}/q) \\ -\sin(2\pi{s_m}/q)&\cos(2\pi{s_m}/q)
\end{smallmatrix}\right]
\right)
\end{equation}
The element $\gamma$ generates a cyclic group of order $q$ in $\SO(2m)$ that acts freely on $S^{2m-1}$.
The following fact is well known (see \cite[Ch.~V]{Co}).

\begin{prop}\label{prop3:lens-isom}
Let $L=L(q;s_1,\dots,s_m)$ and $L'=L(q;s_1',\dots,s_m')$ be  lens spaces.
Then the following assertions are equivalent.
\begin{enumerate}
  \item $L$ is isometric to $L'$.
  \item $L$ is diffeomorphic to $L'$.
  \item $L$ is homeomorphic to $L'$.
  \item There exist $t \in\Z$ coprime to $q$ and $\epsilon \in \{\pm1\}^{m}$ such that $(s_1,\dots,s_m)$ is a permutation of $(\epsilon_1 t s_1',\dots,\epsilon_m t s_m') \pmod q$.
\end{enumerate}
Furthermore, $L$ and $L'$ are homotopically equivalent if and only if there exists $t\in\Z$ such that $s_1\dots s_m\equiv\pm t^m s_1'\dots s_m'\pmod{q}$.
\end{prop}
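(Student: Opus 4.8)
The plan is to verify the cyclic chain of implications $(1)\Rightarrow(2)\Rightarrow(3)\Rightarrow(4)\Rightarrow(1)$ and then treat the homotopy statement separately. The implications $(1)\Rightarrow(2)\Rightarrow(3)$ are immediate, since an isometry is in particular a diffeomorphism and a diffeomorphism is in particular a homeomorphism. For $(4)\Rightarrow(1)$ I would produce the isometry explicitly. Replacing $\g'$ by the power $\g'^t$ with $t$ coprime to $q$ generates the same cyclic group $\langle\g'\rangle$ while turning the rotation angles $2\pi s_j'/q$ into $2\pi t s_j'/q$; a permutation of the $m$ two-dimensional blocks is realized by a permutation matrix that lies in $\SO(2m)$, because swapping two blocks is an even permutation of the $2m$ coordinates; and each sign change $\ep_j=-1$ is realized by conjugating the $j$-th block by $\diag(1,-1)\in\Ot(2)$, turning the rotation of angle $\ww$ into the rotation of angle $-\ww$. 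Composing these transformations yields $g\in\Ot(2m)$ with $g\langle\g\rangle g^{-1}=\langle\g'\rangle$, hence an isometry $L\to L'$.

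It is worth noting that the reverse linear-algebra implication $(1)\Rightarrow(4)$ is equally transparent: an isometry $L\to L'$ lifts to some $g\in\Ot(2m)$ with $g\g g^{-1}=\g'^t$ for a suitable $t$ coprime to $q$, and comparing the eigenvalues of the conjugate matrices $g\g g^{-1}$ and $\g'^t$ identifies the multiset $\{e^{\pm 2\pi i s_j/q}\}$ with $\{e^{\pm 2\pi i t s_j'/q}\}$, which is exactly condition $(4)$. Thus $(1)\Leftrightarrow(4)$ is elementary, and the genuine content of the proposition is concentrated in the single step $(3)\Rightarrow(4)$: that a mere homeomorphism forces this rigid arithmetic relation.

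For $(3)\Rightarrow(4)$ I would invoke the Reidemeister torsion of lens spaces. Using the free cellular $\Z/q$-action on $S^{2m-1}$, the torsion of $L(q;s_1,\dots,s_m)$ evaluated at a faithful character $\chi$ with $\chi(\g)=\zeta=e^{2\pi i/q}$ is, up to the unavoidable indeterminacy by units $\pm\zeta^k$ and by complex conjugation, a product of the form $\prod_{j=1}^m(\zeta^{s_j}-1)$; letting $\chi$ vary over all faithful characters records the data $\prod_{j}(\zeta^{a s_j}-1)$ for every $a$ coprime to $q$. The first and deepest ingredient is that this torsion is a homeomorphism invariant: in the smooth or PL category this follows from the invariance of Whitehead torsion under simple homotopy equivalence, while for genuine topological homeomorphisms one appeals to Chapman's theorem on the topological invariance of Whitehead torsion. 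The second ingredient is number-theoretic: Franz's independence lemma shows that equality of all these products forces the multiset $\{\pm s_j\bmod q\}$ to coincide with $\{\pm t s_j'\bmod q\}$ for a single multiplier $t$, which is precisely $(4)$. I expect this to be the main obstacle, both because the topological invariance of torsion is a hard external input and because recovering $(4)$ from the torsion requires Franz's lemma rather than a direct computation.

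Finally, for the homotopy statement I would replace torsion by a cohomological invariant. A homotopy equivalence $f\colon L\to L'$ induces an isomorphism $\pi_1(L)=\Z/q\to\pi_1(L')=\Z/q$ given by multiplication by some $t$ coprime to $q$. Identifying $H^2(L;\Z)\cong\Z/q$ with the preferred generator $\al$ arising as the Euler class of the associated plane bundles, one checks that the single structure constant of the top cup product is governed by $s_1\cdots s_m\bmod q$, while $f$ scales this generator by $t$ and has degree $\pm1$; the compatibility of $f^*$ with the $m$-fold cup-product power then forces $s_1\cdots s_m\equiv\pm t^m\,s_1'\cdots s_m'\pmod q$. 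Conversely, when this congruence holds one constructs a homotopy equivalence cell by cell over the standard CW structure, the congruence being exactly the obstruction to extending the map over the top cell. Here the work is bookkeeping with the cell structure and the cohomology ring, and no deep invariance theorem is needed.
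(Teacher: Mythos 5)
The paper gives no proof of this proposition at all: it is stated as a well-known fact with a pointer to Cohen's book \cite[Ch.~V]{Co}, so the only meaningful comparison is with the classical literature, and your proposal is essentially a correct reconstruction of the proof found there. Your division of labor is the right one: $(1)\Rightarrow(2)\Rightarrow(3)$ are formal; $(4)\Leftrightarrow(1)$ is elementary conjugation in $\Ot(2m)$ (both your explicit construction of the conjugating element and, for the converse, lifting an isometry of the quotients to an isometry of $S^{2m-1}$ and comparing eigenvalues are correct); and the genuine content is $(3)\Rightarrow(4)$, where Reidemeister torsion at the faithful characters, Franz's independence lemma, and Chapman's theorem for the purely topological case is exactly the standard route. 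One convention point: with the paper's definition \eqref{eq:gamma} of $\gamma$, the torsion comes out in the \emph{inverse} parameters, $\prod_j(\zeta^{a\ell_j}-1)$ with $\ell_j s_j\equiv 1\pmod q$; this is harmless because condition (4) is stable under inverting all parameters (replace $t$ by $t^{-1}\bmod q$).

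The one claim that is false as literally written is in your necessity argument for the homotopy criterion. The integral cohomology ring cannot ``govern'' $s_1\cdots s_m$: for every choice of parameters the even-degree part of $H^*(L;\Z)$ is $\Z[\alpha]/(q\alpha,\alpha^m)$, and the $m$-fold power $\alpha^m$ lies in $H^{2m}(L;\Z)=0$, so all lens spaces of the same dimension and group order have isomorphic integral cohomology rings and there is no ``structure constant of the top cup product'' to transport. The invariant that does the job is the evaluation $\langle \beta\cup(\delta\beta)^{m-1},[L]\rangle\in\Z/q$, where $\beta\in H^1(L;\Z/q)$ is the class attached to the chosen generator of $\pi_1(L)$ and $\delta$ is the Bockstein (equivalently, one may use the torsion linking form); up to a universal unit this equals $(s_1\cdots s_m)^{\pm1}\bmod q$. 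Naturality, $f^*\beta'=t\beta$ and hence $f^*(\delta\beta')=t\,\delta\beta$, together with $\deg f=\pm1$, then yields $s_1\cdots s_m\equiv\pm t^m s_1'\cdots s_m'\pmod q$, which is what you intended; note that it is this pairing against the fundamental class, not any abstract ring structure, that carries the information. Your sufficiency sketch (build the map over the skeleta, with the congruence appearing as the obstruction to extending over the top cell with degree $\pm1$, then conclude by Whitehead's theorem via the universal covers) is the standard argument and is fine.
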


Let $L=L(q;s_1,\dots,s_m) = \Gamma\ba S^{2m-1}$ be a lens space and let $\xi=\exp(2\pi i/q)$.
From \eqref{eq:F^0converges}, one has that
\begin{equation}\label{eq:F^0-lens}
F_\Gamma^0(z)=\frac 1q\sum_{l=1}^q \frac{1-z^2}{\prod_{j=1}^m (z-\xi^{s_j l}) (z-\xi^{-s_j l})}-1.
\end{equation}
This formula was first pointed out in \cite[Thm.~3.2]{IY}.

We now sketch Ikeda's construction of families of $0$-isospectral lens spaces.
For each $q$ and $m$ positive integers, he considered the subfamily of lens spaces
\begin{equation}\label{eq:L_0(q;m)}
\widetilde {\mathfrak L}_0(q;m) = \{L(q;s_1,\dots,s_m): s_i\not\equiv \pm s_j\pmod{q}\quad\forall\, i\neq j\}.
\end{equation}
Denote by ${\mathfrak L}_0(q;m)$ the isometry classes in $\widetilde {\mathfrak L}_0(q;m)$.
By the definition, the parameters $s_1,\dots,s_m$ of every lens space in an isometry class in ${\mathfrak L}_0(q;m)$ must be all different.
For $L=L(q;s_1,\dots,s_m) \in \widetilde {\mathfrak L}_0(q;m)$, choose  $h$ integers $\bar s_1,\dots,\bar s_h$ such that
$$
\{\pm s_1,\dots, \pm s_m, \pm \bar s_1,\dots,\pm\bar s_h\}
$$
is a set of representatives of integers mod~$q$, coprime to $q$. Therefore $2m + 2h = \phi(q)$, where $\phi(q)$ denotes the Euler phi function.
Denote by $\bar L$ the $2h-1$-dimensional lens space $L(q;\bar s_1,\dots,\bar s_h)$ and by $\bar\gamma$ the generator of the group $\bar\Gamma$ given by \eqref{eq:gamma} with $\bar{s_i}$ in place
of $s_i$, thus $\bar L=\bar\Gamma\ba S^{2h-1}$.
It is easy to show that two lens spaces $L$ and $L'$ in ${\mathfrak L}_0(q;m)$ are isometric if and only if $\overline{L}$ and $\overline{L'}$ are isometric (\cite[Prop.~3.3]{Ik88}).
Furthermore, Ikeda proved the following important fact.

\begin{prop}\label{prop:duality}
Let $q$ be an odd prime.
Two lens spaces $L,L'\in {\mathfrak L}_0(q;m)$ are $p$-isospectral for all $p$ if and only if $\overline L$ and $\overline {L'}$ are $p$-isospectral for all $p$.
\end{prop}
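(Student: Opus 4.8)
The plan is to reduce everything to Ikeda's two-variable generating function and to exploit Theorem~\ref{thm:allpIkeda}, which asserts that $L=\Gamma\ba S^{2m-1}$ and $L'=\Gamma'\ba S^{2m-1}$ are $p$-isospectral for all $p$ exactly when $\QQ_\Gamma=\QQ_{\Gamma'}$, and likewise $\overline L,\overline{L'}$ when $\QQ_{\overline\Gamma}=\QQ_{\overline{\Gamma'}}$. Thus it suffices to establish a single functional relation tying $\QQ_\Gamma(w,z)$ to $\QQ_{\overline\Gamma}$ in which the only $\Gamma$-dependent ingredient is $\QQ_{\overline\Gamma}$ itself; everything else must depend only on $q$ and $m$ so as to be common to the whole family $\mathfrak L_0(q;m)$.

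First I would write, using \eqref{eq:allpgenerating} and that $\Gamma=\langle\gamma\rangle$ has order $q$, the expansion $\QQ_\Gamma(w,z)=\sum_{l=0}^{q-1}\det(w-\gamma^l)/\det(z-\gamma^l)$, and factor each determinant through the eigenvalues $\xi^{\pm s_jl}$ of $\gamma^l$, where $\xi=\exp(2\pi i/q)$. Setting $P_l(x)=\prod_{j=1}^m(x-\xi^{s_jl})(x-\xi^{-s_jl})$ and $\overline P_l(x)=\prod_{i=1}^h(x-\xi^{\bar s_il})(x-\xi^{-\bar s_il})$, the $l$-th summand of $\QQ_\Gamma(w,z)$ is $P_l(w)/P_l(z)$, and similarly $\QQ_{\overline\Gamma}(z,w)=\sum_l \overline P_l(z)/\overline P_l(w)$. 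The crucial input is that $q$ is an odd prime: then every $l\in\{1,\dots,q-1\}$ is invertible mod $q$, so multiplication by $l$ permutes $(\Z/q\Z)^\times$, and since $\{\pm s_1,\dots,\pm s_m,\pm\bar s_1,\dots,\pm\bar s_h\}$ is a full set of nonzero residues, the combined exponents $\{\pm s_jl,\pm\bar s_il\}$ again run over all of $(\Z/q\Z)^\times$. Hence $P_l(x)\,\overline P_l(x)=\prod_{a=1}^{q-1}(x-\xi^a)=\Phi(x)$, the cyclotomic polynomial $\Phi(x)=(x^q-1)/(x-1)$, \emph{independently of} $l\neq 0$.

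From $P_l(w)=\Phi(w)/\overline P_l(w)$ and $P_l(z)=\Phi(z)/\overline P_l(z)$ I obtain, for $l\neq 0$, the pointwise identity $P_l(w)/P_l(z)=\tfrac{\Phi(w)}{\Phi(z)}\,\overline P_l(z)/\overline P_l(w)$. Summing over $l=1,\dots,q-1$ and separating the $l=0$ terms (which contribute $\bigl(\tfrac{w-1}{z-1}\bigr)^{2m}$ on the left and $\bigl(\tfrac{z-1}{w-1}\bigr)^{2h}$ on the right) yields
\begin{equation*}
\QQ_\Gamma(w,z)=\left(\tfrac{w-1}{z-1}\right)^{2m}+\frac{\Phi(w)}{\Phi(z)}\left(\QQ_{\overline\Gamma}(z,w)-\left(\tfrac{z-1}{w-1}\right)^{2h}\right).
\end{equation*}
Since $m$ is fixed on $\mathfrak L_0(q;m)$ and $h=(\phi(q)-2m)/2$ is then determined by $q$, every term on the right except $\tfrac{\Phi(w)}{\Phi(z)}\QQ_{\overline\Gamma}(z,w)$ is identical for all members of the family and their complements. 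Subtracting the identities for $\Gamma$ and $\Gamma'$ thus gives $\QQ_\Gamma-\QQ_{\Gamma'}=\tfrac{\Phi(w)}{\Phi(z)}\bigl(\QQ_{\overline\Gamma}(z,w)-\QQ_{\overline{\Gamma'}}(z,w)\bigr)$. As $\Phi(w)/\Phi(z)$ is a nonzero rational function, $\QQ_\Gamma=\QQ_{\Gamma'}$ holds iff $\QQ_{\overline\Gamma}(z,w)=\QQ_{\overline{\Gamma'}}(z,w)$, which after renaming the variables is iff $\QQ_{\overline\Gamma}=\QQ_{\overline{\Gamma'}}$; applying Theorem~\ref{thm:allpIkeda} to $\{L,L'\}$ and to $\{\overline L,\overline{L'}\}$ then closes the argument.

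I expect the only genuine bookkeeping to be the handling of the $l=0$ terms and the variable swap $z\leftrightarrow w$ built into $\QQ_{\overline\Gamma}$. The conceptually essential hypothesis is the primality of $q$: it is precisely what forces every nonzero power $\gamma^l$ to remain a generator and makes the products $P_l\,\overline P_l$ collapse to the \emph{single} polynomial $\Phi$ uniformly in $l$. For composite $q$ the map $a\mapsto la$ fails to be a bijection of the residues coprime to $q$ for those $l$ sharing a factor with $q$, the factorization is no longer uniform, and the clean duality would break down.
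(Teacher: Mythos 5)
Your proof is correct and follows essentially the same route as the paper: the paper states this proposition as Ikeda's result and proves the analogous statement for $q=r^2$ (Proposition~\ref{prop:dualityq=r^2}) by exactly your functional-equation technique --- expanding $\QQ_\Gamma$ via \eqref{eq:allpgenerating}, splitting off the identity element, using that $\det(w-\gamma^l)\det(w-\bar\gamma^l)=\Phi_q(w)$ for $l\not\equiv 0 \pmod q$, and invoking Theorem~\ref{thm:allpIkeda} at the end. The odd-prime case you handle is the simpler instance of that argument, since every nonzero $l$ is a unit mod $q$ and no intermediate-divisor terms (like the $\QQ_{L_0}$ correction in the paper's $q=r^2$ proof) arise.
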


Ikeda restricted his attention to lens spaces in ${\mathfrak L}_0(q;m)$ for $q$ an odd prime.
In this case, each term ${\prod_{j=1}^m (z-\xi^{s_j l}) (z-\xi^{-s_j l})}$ in \eqref{eq:F^0-lens} divides the $q$-th cyclotomic polynomial $\Phi_q(z):=\prod_{l=1}^{q-1} (z-\xi^l)$ for any $1\leq l\leq q-1$.
Hence, for $L=L(q;s_1,\dots,s_m) = \Gamma\ba S^{2m-1}\in {\mathfrak L}_0(q;m)$, \eqref{eq:F^0-lens} implies that
\begin{equation}\label{eq:F^0-lens_0}
F_\Gamma^0(z)
    = -1+\frac 1q\frac{1-z^2}{(1-z)^{2m}}
    + \frac{1-z^2}{q\,\Phi_q(z)}
    \sum_{l=1}^{q-1} \prod_{j=1}^h (z-\xi^{\bar s_j l}) (z-\xi^{-\bar s_j l}).
\end{equation}
Set
\begin{equation}\label{eq:Psi}
\Psi_\Gamma(z) = \sum_{l=1}^{q-1} \prod_{j=1}^h (z-\xi^{\bar s_j l}) (z-\xi^{-\bar s_j l}).
\end{equation}
This is a polynomial of degree $2h$ with  coefficients in the cyclotomic field $\Q(\xi)$.
Now, \eqref{eq:F^0-lens_0} gives a \emph{finite} condition for $0$-isospectrality, namely, $L,L'\in {\mathfrak L}_0(q;m)$ are $0$-isospectral if and only if $\Psi_\Gamma(z) = \Psi_{\Gamma'}(z)$.

In this way, Ikeda found families of $0$-isospectral lens spaces by showing that, in some cases,  the (well defined) map
$$
\begin{array}{ccc}
L=\Gamma\ba S^{2m-1}\in {\mathfrak L}_0(q;m) &\longmapsto & \Psi_\Gamma(z)\in \Q(\xi)[z]
\end{array}
$$
is not one to one (see \cite[Thm.~3.1]{Ik80a}).
To find such examples, he first compute some coefficients of $\Psi_\Gamma(z)$ (see \cite[Prop.~1.2]{Ik80a}).
Indeed, let $q$ be an odd prime and $q-1=2m+2h$.
For $L =\Gamma\ba S^{2m-1} \in {\mathfrak L}_0(q;m)$,  if we write  $\Psi_{\Gamma}(z) = \sum_{k=0}^{2h} (-1)^k a_k\, z^{2h-k}$,
then he shows that $a_0=q-1$, $a_1=-2m$, $a_2=m(q-2m+1)$ and $a_k=a_{2h-k}$ for all $0\leq k\leq 2h$.
Note that, $a_0$, $a_1$, $a_2$, $a_{2h-2}$, $a_{2h-1}$ and $a_{2h}$ do not depend on $L$, thus, if $h=2$, $\Psi_\Gamma(z)$ is the same for all lens spaces.
As a consequence he obtained the following result (\cite[Thm.~3.1]{Ik80a}).

\begin{thm}\label{thm:0-isosp-flias}
Let $q$ be an odd prime and let $m$ be such that $2m + 4=q-1$ (i.e.~ $m=(q-5)/2$).
Then, any two lens spaces in $\mathfrak L_0(q,m)$ are $0$-isospectral.
\end{thm}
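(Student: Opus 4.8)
The plan is to deduce the statement directly from Ikeda's coefficient computation for $\Psi_\Gamma$ together with the palindromic relation $a_k=a_{2h-k}$ recorded just above. First I would pin down the dual dimension forced by the hypothesis: since $q$ is prime, $\phi(q)=q-1$, and the defining relation $2m+2h=\phi(q)$ for the complementary lens space $\bar L$ gives $2m+2h=q-1$; comparing with $2m+4=q-1$ yields $2h=4$, that is $h=2$. Consequently, for every $L=\Gamma\ba S^{2m-1}\in\mathfrak L_0(q;m)$ the polynomial $\Psi_\Gamma(z)$ of \eqref{eq:Psi} has degree $2h=4$.

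By the finite $0$-isospectrality criterion stated after \eqref{eq:F^0-lens_0}, two spaces $L,L'\in\mathfrak L_0(q;m)$ are $0$-isospectral if and only if $\Psi_\Gamma(z)=\Psi_{\Gamma'}(z)$, so it suffices to show that $L\mapsto\Psi_\Gamma(z)$ is constant on $\mathfrak L_0(q;m)$. Write $\Psi_\Gamma(z)=\sum_{k=0}^{4}(-1)^k a_k\,z^{4-k}$. The palindromic relation $a_k=a_{4-k}$ means that only $a_0,a_1,a_2$ can be independent, the remaining coefficients being recovered as $a_3=a_1$ and $a_4=a_0$. But the formulas recorded above give $a_0=q-1$, $a_1=-2m$ and $a_2=m(q-2m+1)$, each a function of $q$ and $m$ alone and hence independent of the particular $L$. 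Therefore all five coefficients of $\Psi_\Gamma(z)$ are determined by $q$ and $m$, so $\Psi_\Gamma(z)$ is literally the same polynomial for every $L\in\mathfrak L_0(q;m)$, and any two such lens spaces are $0$-isospectral.

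Granted the coefficient formulas and the symmetry, there is no further obstacle; the real content of the theorem is the numerology that $h=2$ is exactly the largest dual dimension for which the three always-independent low-order coefficients $a_0,a_1,a_2$, combined with palindromy, exhaust the whole polynomial. (For $h\ge 3$ a genuinely free middle coefficient such as $a_3$ survives and can separate lens spaces, which is precisely where Ikeda's isospectral non-isometric families begin to appear.) The only points I would verify carefully are that the degree does not drop, which holds since the leading coefficient is $a_0=q-1\neq 0$, and that the sign convention indeed pairs $a_4$ with $a_0$ and $a_3$ with $a_1$ as used above.
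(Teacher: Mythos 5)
Your proof is correct and is essentially the same argument the paper gives: the hypothesis forces $h=2$ (since $2m+2h=\phi(q)=q-1$), so the palindromy $a_k=a_{2h-k}$ together with the formulas $a_0=q-1$, $a_1=-2m$, $a_2=m(q-2m+1)$ pins down all five coefficients of the degree-four polynomial $\Psi_\Gamma(z)$ independently of $L$, and the finite criterion $\Psi_\Gamma=\Psi_{\Gamma'}$ then yields $0$-isospectrality. No gaps; this matches Ikeda's reasoning as recorded in the paragraphs preceding the theorem.
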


The previous theorem gives a way to obtain increasing families of pairwise $0$-isospectral lens spaces of dimension $n\ge 5$, where $q$ runs over the odd prime numbers.
Indeed, $|\mathfrak L_0(q,m)|\geq\frac{1}{(q-1)/2}\binom{(q-1)/2}{m}=\frac{q-3}{4}$ under the hypotheses in Theorem~\ref{thm:0-isosp-flias}.
The simplest case is $q=11$, thus $m=3$ and dimension $n=5$.
Then one can check that ${\mathfrak L}_0(q;3)$ has two isometry classes represented by $L(11;1,2,3)$ and $L(11;1,2,4)$.
One can also check that they are homotopically equivalent to each other by Proposition~\ref{prop3:lens-isom}.
However, if one takes  $q=13$, then $m=4$, $n=7$ and ${\mathfrak L}_0(q;4)$ contains $L(13;1,2,3,4)$, $L(13;1,2,3,5)$ and $L(13;1,2,3,6)$, which are not homotopically equivalent to each other.

Ikeda proved that two non-isometric lens spaces in ${\mathfrak L}_0(q;m)$ as in Theorem~\ref{thm:0-isosp-flias} cannot
be $p$-isospectral for all $p$ (see \cite[Thm.~3.9]{Ik88}).
The argument is as follows.
Suppose that $L,L'\in {\mathfrak L}_0(q;m)$ are $p$-isospectral for all $p$, with $q$ an odd prime and $q-1=2m+4$.
By Proposition~\ref{prop:duality}, $\overline L$ and $\overline{L'}$ are $p$-isospectral for all $p$ and of dimension $2h-1=3$.
However, two $3$-dimensional $0$-isospectral lens spaces must be isometric (see \cite{IY}, \cite{Y}), thus $\overline L$ and $\overline{L'}$, and therefore $L$ and $L'$, are isometric.

However, by using the same family as in Theorem~\ref{thm:0-isosp-flias}, Ikeda in \cite{Ik88} found for each $p_0\ge 0$, examples of pairs of lens spaces that are $p$-isospectral for every $0\leq p\leq p_0$ but are not $p_0+1$-isospectral.
We conclude this section by giving the main ideas used in his proof.
We will use the condition of $p$-isospectrality for $0\leq p\leq p_0$ in Proposition~\ref{prop:p_0-isosp}.
Let $L=L(q;s_1,\dots,s_m)=\Gamma\ba S^{2m-1}\in {\mathfrak L}_0(q;m)$, where $q$ is an odd prime number and $q-1=2m+4$.
Similarly as in \eqref{eq:F^0-lens_0} we obtain that
\begin{align}\label{eq:F^p-lens0}
\widetilde F_\Gamma^p(z)
    &= \frac{\tbinom{2m}{p}}{(z-1)^{2m}} +
        \sum_{l=1}^{q-1} \frac{\chi^p(g^l)}{\prod_{j=1}^m (z-\xi^{s_j l}) (z-\xi^{-s_j l})}  \\
    &= \frac{\tbinom{2m}{p}}{(z-1)^{2m}}-\Phi_q(z)^{-1} (z-1)^4 \notag\\
    &\qquad +\Phi_q(z)^{-1} \sum_{l=1}^q \chi^p(g^l)\prod_{j=1}^2 (z-\xi^{\bar s_j l}) (z-\xi^{-\bar s_j l}).\notag
\end{align}
Hence, the polynomial
$$
\Psi_\Gamma^p(z) := \sum_{l=1}^q \chi^p(g^l)\prod_{j=1}^2 (z-\xi^{\bar s_j l}) (z-\xi^{-\bar s_j l}),
$$
which has degree four, determines $\widetilde F_\Gamma^p(z)$, thus the five coefficients of $\Psi_\Gamma^p(z)=\sum_{t=0}^{4} (-1)^t b_{L,p}^{(t)}\, z^t$ play an important role.
One can check that (see \cite[p.~404]{Ik88})
\begin{align*}
  b_{L,p}^{(0)}&=b_{L,p}^{(4)}= -1+ q \textstyle\sum\limits_{d=0}^{[p/2]} \tbinom{m}{d} \; A_{L}^{(p-2d)}(0),\\
  b_{L,p}^{(1)}&=b_{L,p}^{(3)}=-1+ 2q \textstyle\sum\limits_{d=0}^{[p/2]} \tbinom{m}{d} \; \left(A_{L}^{(p-2d)}(\bar s_1)+A_{L}^{(p-2d)}(\bar s_2)\right) ,\\
  b_{L,p}^{(2)}&=-1+2b_{L,p}^{(0)}+ 2q\textstyle\sum\limits_{d=0}^{[p/2]} \tbinom{m}{d} \; \left( A_{L}^{(p-2d)}(\bar s_1+\bar s_2) + A_{L}^{(p-2d)}(\bar s_1-\bar s_2)\right),
\end{align*}
where
\begin{equation}\label{eq:A_l^(k)(s)}
A_{L}^{(k)}(s) := \#\left\{A\subset S:
\begin{array}{l}
a\not\equiv -a'\pmod q\quad\forall a\neq a'\in A,\\
|A|=k,\; \sum_{a\in A}a\equiv s\pmod{q}
\end{array}
\right\}.
\end{equation}
and $S=\{\pm s_1,\dots,\pm s_m\}$.
Now, Proposition~\ref{prop:p_0-isosp} can be rewritten in the particular case of lens spaces in ${\mathfrak L}_0(q;m)$ as follows (see \cite[Prop.~4.2]{Ik88}).

\begin{prop}
Let $q$ be an odd prime, $m=(q-5)/2$ and $0\leq p_0\leq m-1$.
Then, two lens spaces $L=L(q;s_1,\dots,s_m)$ and $L'=L(q;s_1',\dots,s_m')$ in ${\mathfrak L}_0(q;m)$ are $p$-isospectral for all $0\leq p\leq p_0$ if and only if
\begin{equation}\label{eq:0--p-condition}
\left\{
\begin{array}{l}
A_{L}^{(p)}(0) = A_{L'}^{(p)}(0),\\
A_{L}^{(p)}(\bar s_1) +A_{L}^{(p)}(\bar s_2)= A_{L'}^{(p)}(\bar s_1') + A_{L'}^{(p)}(\bar s_2'),\\
A_{L}^{(p)}(\bar s_1+\bar s_2) +A_{L}^{(p)}(\bar s_1-\bar s_2)= A_{L'}^{(p)}(\bar s_1' + \bar s_2') + A_{L'}^{(p)}(\bar s_1'-\bar s_2'),\\
\end{array}
\right.
\end{equation}
for all $0\leq p\leq p_0$.
\end{prop}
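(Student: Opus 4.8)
The plan is to chain together the reductions already assembled in this section: pass from $p$-isospectrality to equality of the functions $\widetilde F^p$, then to equality of the degree-four polynomials $\Psi^p$, then to equality of their three independent coefficients, and finally to invert a triangular linear system to recover exactly \eqref{eq:0--p-condition}. First I would invoke Proposition~\ref{prop:p_0-isosp}: $L$ and $L'$ are $p$-isospectral for all $0\le p\le p_0$ if and only if $\widetilde F_\Gamma^p(z)=\widetilde F_{\Gamma'}^p(z)$ for every such $p$. Reading off \eqref{eq:F^p-lens0}, the first two summands $\tbinom{2m}{p}(z-1)^{-2m}$ and $-\Phi_q(z)^{-1}(z-1)^4$ depend only on $q$, $m$ and $p$, not on the individual space; since $L$ and $L'$ share the same $q$ and $m$, the difference $\widetilde F_\Gamma^p-\widetilde F_{\Gamma'}^p$ equals $\Phi_q(z)^{-1}\bigl(\Psi_\Gamma^p-\Psi_{\Gamma'}^p\bigr)$. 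As $\Phi_q(z)^{-1}\neq 0$, this vanishes if and only if $\Psi_\Gamma^p(z)=\Psi_{\Gamma'}^p(z)$. Because $\Psi_\Gamma^p$ has degree four and its coefficients satisfy the palindromic relations $b_{L,p}^{(0)}=b_{L,p}^{(4)}$ and $b_{L,p}^{(1)}=b_{L,p}^{(3)}$, equality of these two palindromic polynomials is equivalent to equality of the three coefficients $b^{(0)}$, $b^{(1)}$, $b^{(2)}$ for $L$ and $L'$, at every $0\le p\le p_0$.

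It then remains to prove that the system $\{\,b_{L,p}^{(t)}=b_{L',p}^{(t)}:t=0,1,2,\ 0\le p\le p_0\,\}$ is equivalent to \eqref{eq:0--p-condition}. Writing $\alpha_L^{(p)}=A_L^{(p)}(0)$, $\beta_L^{(p)}=A_L^{(p)}(\bar s_1)+A_L^{(p)}(\bar s_2)$ and $\gamma_L^{(p)}=A_L^{(p)}(\bar s_1+\bar s_2)+A_L^{(p)}(\bar s_1-\bar s_2)$, the displayed formulas for the coefficients express each $b_{L,p}^{(t)}$ as a binomial-weighted sum $\sum_{d=0}^{[p/2]}\tbinom{m}{d}(\cdot)^{(p-2d)}$, with $b^{(2)}$ additionally carrying the term $2b^{(0)}$. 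In each channel the $d=0$ summand isolates the level-$p$ quantity with the nonzero scalar factor $q$, while the terms $d\ge 1$ involve only strictly smaller indices $p-2d$. The \emph{only if} direction (from \eqref{eq:0--p-condition} to isospectrality) is then a direct substitution: if every $\alpha,\beta,\gamma$ agrees at all levels $0\le p\le p_0$, then each $b$-sum agrees termwise, where for $b^{(2)}$ one uses the already-established equality of $b^{(0)}$ to cancel the $2b^{(0)}$ term. For the converse I would argue by induction on $p$. The base case $p=0$ is automatic, since $A_L^{(0)}(s)=\delta_{s,0}$ and the arguments $\bar s_1,\bar s_2,\bar s_1\pm\bar s_2$ are all nonzero modulo $q$ (the $2m+2h$ representatives being distinct), so \eqref{eq:0--p-condition} reads $1=1$, $0=0$, $0=0$. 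Assuming \eqref{eq:0--p-condition} at all levels below $p$, the equation $b_{L,p}^{(0)}=b_{L',p}^{(0)}$ has all its $d\ge 1$ terms cancel by the inductive hypothesis, leaving $q\,\alpha_L^{(p)}=q\,\alpha_{L'}^{(p)}$, the first line of \eqref{eq:0--p-condition} at level $p$; the same peeling applied to $b^{(1)}$, and to $b^{(2)}$ after subtracting the now-equal $2b^{(0)}$, yields the second and third lines.

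The main obstacle is precisely this last disentangling. The coefficients $b_{L,p}^{(t)}$ are not the clean counts $A_L^{(p)}(\cdot)$ but binomial-weighted convolutions mixing the levels $p,p-2,p-4,\dots$, compounded by the coupling of $b^{(2)}$ to $b^{(0)}$. What must be checked with care is that this convolution is upper unitriangular (up to the harmless nonzero factor $q$) in the level index, that the parity-preserving step of $2$ does not obstruct invertibility, and that the coupled term in $b^{(2)}$ can be removed once the $b^{(0)}$ relation is in hand — so that, taken over the whole range $0\le p\le p_0$, the two families of conditions genuinely determine one another. Everything else reduces to substituting the formulas for $\widetilde F_\Gamma^p$, $\Psi_\Gamma^p$ and the $b_{L,p}^{(t)}$ supplied above.
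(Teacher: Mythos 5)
Your proposal is correct and follows exactly the chain the paper itself assembles: Proposition~\ref{prop:p_0-isosp}, the reduction of $\widetilde F_\Gamma^p$ to the degree-four polynomial $\Psi_\Gamma^p$ via \eqref{eq:F^p-lens0}, the palindromic coefficient formulas for $b_{L,p}^{(t)}$, and the observation that the $d=0$ terms isolate the level-$p$ counts $A_L^{(p)}(\cdot)$ while the $d\geq 1$ terms involve only lower levels. Your explicit induction on $p$ (with the base case $A_L^{(0)}(s)=\delta_{s,0}$ and the removal of the $2b^{(0)}$ coupling in $b^{(2)}$) merely makes rigorous the triangular inversion that the paper leaves implicit, so this is the same argument.
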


Ikeda found subfamilies in ${\mathfrak L}_0(q;m)$ such that satisfy \eqref{eq:0--p-condition}.
Set
\begin{equation}
{\mathfrak L}_p(q;m) = \left\{L(q;s)\in {\mathfrak L}_0(q;m): \begin{array}{l}
a_1\bar s_1+a_2\bar s_2\not\equiv0\pmod q ,\\
\forall \; 1\leq |a_1|+|a_2|\leq p+2
\end{array}
\right\},
\end{equation}
thus one has the filtration
\begin{equation}\label{eq:filtration}
{\mathfrak L}_0(q;m) \supset {\mathfrak L}_1(q;m) \supset {\mathfrak L}_2(q;m)\supset\dots\,.
\end{equation}
For example, for $q\geq11$ an odd prime, if $\overline{L}=L(q;1,2)$ then $L\in {\mathfrak L}_0(q;m)\smallsetminus {\mathfrak L}_1(q;m)$ and if $\overline{L}=L(q;1,3)$ then $L\in {\mathfrak L}_1(q;m)\smallsetminus {\mathfrak L}_2(q;m)$.

By making several computations with the numbers in \eqref{eq:A_l^(k)(s)}, he showed that two lens spaces at the same level $p_0$ of the filtration satisfy \eqref{eq:0--p-condition} for all $0\leq p\leq p_0$.
Moreover, if only one of them lies in the next level $p_0+1$, then they cannot satisfy \eqref{eq:0--p-condition} for $p=p_0+1$.
More precisely, we can now state \cite[Thm.~4.1]{Ik88}.

\begin{thm}\label{thm:mainIkeda}
Let $q$ be an odd prime, $m=(q-5)/2$ and $0\leq p_0\leq m-1$ and let $L$ and $L'$ be lens spaces in ${\mathfrak L}_{p_0}(q;m)$.
Then $L$ and $L'$ are $p$-isospectral for all $0\leq p\leq p_0$.
If furthermore $L\in {\mathfrak L}_{p_0+1}(q;m)$ and $L'\notin {\mathfrak L}_{p_0+1}(q;m)$, then $L$ and $L'$ are not $p_0+1$-isospectral.
\end{thm}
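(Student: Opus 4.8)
The plan is to reduce everything to Ikeda's criterion \eqref{eq:0--p-condition} and then to show that, as long as one stays high enough in the filtration, the three counting quantities appearing there are \emph{universal}, i.e.\ they depend only on $q$ and $p$ and not on the individual lens space. Since here $q-1=2m+4$, one has $h=2$ and $S=\{\pm s_1,\dots,\pm s_m\}$ is exactly the complement of $\{\pm\bar s_1,\pm\bar s_2\}$ in $(\Z/q\Z)^\times$. By \eqref{eq:0--p-condition} it suffices to prove that for $L\in\mathfrak L_{p_0}(q;m)$ and $0\le p\le p_0$ the three numbers $A_L^{(p)}(0)$, $A_L^{(p)}(\bar s_1)+A_L^{(p)}(\bar s_2)$ and $A_L^{(p)}(\bar s_1+\bar s_2)+A_L^{(p)}(\bar s_1-\bar s_2)$ are independent of $L$; then two members of $\mathfrak L_{p_0}(q;m)$ automatically satisfy \eqref{eq:0--p-condition} for all $0\le p\le p_0$.

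To establish universality I would pass to the finite Fourier transform on $\Z/q\Z$. Writing $\widehat{A}_L^{(p)}(j)=\sum_s A_L^{(p)}(s)\,\xi^{js}$ and noting that $A_L^{(p)}$ counts signed $p$-subsets of $\{s_1,\dots,s_m\}$, one gets $\widehat{A}_L^{(p)}(j)=e_p\big(\{\xi^{js_i}+\xi^{-js_i}\}_{i=1}^m\big)$, the $p$-th elementary symmetric polynomial. The key point is that for $j\neq 0$ multiplication by $j$ permutes $(\Z/q\Z)^\times$ and preserves the pairing $x\leftrightarrow -x$, so the multiset $\{\xi^{js_i}+\xi^{-js_i}\}_i$ is obtained from the fixed multiset $\Theta=\{\,2\cos(2\pi k/q):1\le k\le (q-1)/2\,\}$ by deleting the two values $\theta_l(j)=\xi^{j\bar s_l}+\xi^{-j\bar s_l}$. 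Expanding $e_p(\Theta\setminus\{\theta_1(j),\theta_2(j)\})=\sum_{k=0}^p(-1)^k h_k(\theta_1(j),\theta_2(j))\,E_{p-k}$, with $E_i=e_i(\Theta)$ universal and $h_k$ the complete homogeneous symmetric polynomial, and then inverting the transform, one finds that the entire dependence of $A_L^{(p)}(s)$ on $L$ is concentrated in sums $\sum_{c_1\bar s_1+c_2\bar s_2\equiv s}m^{(k)}_{c_1,c_2}$, where $m^{(k)}_{c_1,c_2}$ are the universal coefficients of $h_k$ written in the exponentials $\xi^{j(c_1\bar s_1+c_2\bar s_2)}$ and $(c_1,c_2)$ ranges over $|c_1|+|c_2|\le k\le p$. (The same identity can be obtained combinatorially, by decomposing the universal full-group count over $(\Z/q\Z)^\times$ according to how many of the pairs $\{\pm\bar s_1\}$, $\{\pm\bar s_2\}$ are used.)

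Now the filtration does its work. For $s=0$ the constraint is $c_1\bar s_1+c_2\bar s_2\equiv0$ with $|c_1|+|c_2|\le p\le p_0$; for $s=\bar s_1$ (or $\bar s_2$) a shift by $(1,0)$ turns it into $d_1\bar s_1+d_2\bar s_2\equiv0$ with $|d_1|+|d_2|\le p+1$; and for $s=\bar s_1\pm\bar s_2$ a shift by $(1,\pm1)$ gives $|d_1|+|d_2|\le p+2\le p_0+2$. In all three cases $L\in\mathfrak L_{p_0}(q;m)$ guarantees, via the nonvanishing of $a_1\bar s_1+a_2\bar s_2$ for $1\le|a_1|+|a_2|\le p_0+2$, that the only admissible combination is the expected one, namely $(0,0)$, $(1,0)$ or $(1,\pm1)$. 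Hence each of the three quantities collapses to a universal value, which proves the first assertion.

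For the second assertion I would exploit that $R_L^{(p)}:=A_L^{(p)}(\bar s_1+\bar s_2)+A_L^{(p)}(\bar s_1-\bar s_2)$ is precisely the quantity sensitive to the next level of the filtration. If $L\in\mathfrak L_{p_0+1}(q;m)$, the computation above with $p=p_0+1$ (so $|d_1|+|d_2|\le p_0+3$) still yields the universal value for all three quantities. If instead $L'\in\mathfrak L_{p_0}(q;m)\setminus\mathfrak L_{p_0+1}(q;m)$, there is a degeneracy $a_1\bar s_1'+a_2\bar s_2'\equiv0$ with $|a_1|+|a_2|=p_0+3$, minimal since $L'\in\mathfrak L_{p_0}$, and after the shift by $(1,\pm1)$ this produces exactly one unexpected admissible combination in $R_{L'}^{(p_0+1)}$, sitting at the extremal level $|c_1|+|c_2|=p_0+1=k$ where $m^{(p_0+1)}_{c_1,c_2}=1$; hence $R_{L'}^{(p_0+1)}$ differs from the universal value $R_{L}^{(p_0+1)}$ and the third equation of \eqref{eq:0--p-condition} fails at $p=p_0+1$. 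The hard part is exactly this last bookkeeping: matching the norm bounds $|c_1|+|c_2|$ against the filtration levels, checking that the degenerate combination must have both coordinates nonzero (using $p_0+3<q$, which holds since $p_0\le m-1$ and $m=(q-5)/2$), separating the same-sign and opposite-sign cases to see that the degeneracy lands in the $\bar s_1+\bar s_2$ summand or the $\bar s_1-\bar s_2$ summand of $R_{L'}^{(p_0+1)}$ but not in both, and confirming that the extremal coefficient equals $1$ so no cancellation occurs; everything else is a routine symmetric-function computation.
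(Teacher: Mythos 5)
Your proposal is correct, and it follows the paper's (i.e., Ikeda's) overall reduction: both arguments run everything through the criterion \eqref{eq:0--p-condition} and then show that, under the filtration hypothesis, the three quantities appearing there are universal, with the defect of $L'\notin{\mathfrak L}_{p_0+1}(q;m)$ surfacing only in the third quantity at $p=p_0+1$. Where you genuinely differ is in the engine used to establish this universality. The paper, following \cite[Thm.~4.1]{Ik88}, proceeds by direct combinatorial evaluation of the numbers $A_L^{(p)}(s)$ of \eqref{eq:A_l^(k)(s)} (decomposing the count of admissible signed subsets of the full unit group according to how many of $\pm\bar s_1,\pm\bar s_2$ are used), which yields explicit binomial-type formulas for the coefficients $b_{L,p}^{(t)}$. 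You instead take the finite Fourier transform on $\Z/q\Z$, identify $\widehat A_L^{(p)}(j)$ with $e_p$ of the multiset $\Theta\setminus\{\theta_1(j),\theta_2(j)\}$, expand via $e_p(\Theta\setminus\{\theta_1,\theta_2\})=\sum_{k}(-1)^k h_k(\theta_1,\theta_2)E_{p-k}$, and invert; this automatically concentrates the entire $L$-dependence in sums over solutions of $c_1\bar s_1+c_2\bar s_2\equiv s$ with $|c_1|+|c_2|\le p$, which is exactly the structure the filtration is designed to kill. Your route buys a cleaner non-isospectrality argument in the second half: since every defect term can only occur at the extremal degree $k=p_0+1$, with coefficient $E_0\, m^{(p_0+1)}_{c_1,c_2}=1$ and common sign $(-1)^{p_0+1}$, no cancellation is possible, and your observations that a minimal relation must have both coordinates nonzero (as $p_0+3\le(q-1)/2<q$) and that each relation pair $\pm d$ feeds exactly one of the two summands of $R_{L'}^{(p_0+1)}$ (same-sign relations feed $\bar s_1+\bar s_2$, opposite-sign ones feed $\bar s_1-\bar s_2$) close the argument; the only imprecision is your phrase ``exactly one unexpected admissible combination,'' since several minimal relation pairs may exist, but as all contribute with the same sign this does not affect the conclusion. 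Ikeda's combinatorial route is more elementary but requires heavier bookkeeping with explicit formulas; yours is more systematic and makes the mechanism visible.
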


Now we fix $p_0\geq0$.
Let $q$ be a prime number greater than $(p_0+2)(p_0+3)+1$ and set $m=(q-5)/2$.
If $L$ and $L'$ are the lens space in ${\mathfrak L}_{0}(q;m)$ such that $\overline L=L(q;1,p_0+2),\,\overline{L'}=L(q;1,p_0+3)\in {\mathfrak L}_{0}(q;2)$, then one can check that $L\in {\mathfrak L}_{p_0}(q;m)\smallsetminus {\mathfrak L}_{p_0+1}(q;m)$ and $L'\in {\mathfrak L}_{p_0+1}(q;m)$.
As a consequence one obtains the following corollary (\cite[Thm.~4.10]{Ik88}).

\begin{coro}\label{thm:mainIkeda2}
For each $p_0\geq0$ there are lens spaces that are $p$-isospectral for all $0\leq p\leq p_0$ but are not $p_0+1$-isospectral.
\end{coro}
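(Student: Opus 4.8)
The plan is to derive the corollary from Theorem~\ref{thm:mainIkeda} by exhibiting, for each $p_0$, an explicit pair of lens spaces occupying two consecutive levels of the filtration \eqref{eq:filtration}. First I would fix $p_0\ge 0$, choose an odd prime $q>(p_0+2)(p_0+3)+1$ and set $m=(q-5)/2$; since $(p_0+2)(p_0+3)+1\ge 2p_0+7$, this guarantees $m\ge p_0+1$, so that the standing hypotheses $0\le p_0\le m-1$ and $q-1=2m+4$ (hence $h=2$) of Theorem~\ref{thm:mainIkeda} are satisfied.

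Next I would construct the pair. Let $L,L'\in{\mathfrak L}_0(q;m)$ be the lens spaces whose associated $3$-dimensional duals are $\overline L=L(q;1,p_0+2)$ and $\overline{L'}=L(q;1,p_0+3)$; these are determined up to isometry by the complementary residues mod $q$, and membership in ${\mathfrak L}_0$ is immediate because $q$ is prime and the dual parameters are distinct mod $\pm$. The crux is then to locate $L$ and $L'$ in the filtration by unwinding the defining condition of ${\mathfrak L}_p(q;m)$, namely that $a_1\bar s_1+a_2\bar s_2\not\equiv0\pmod q$ for all $1\le|a_1|+|a_2|\le p+2$. For $\overline L$ the relation $-(p_0+2)\cdot 1+1\cdot(p_0+2)=0$, realized with weight $|a_1|+|a_2|=p_0+3=(p_0+1)+2$, shows at once that $L\notin{\mathfrak L}_{p_0+1}$, while checking that no cancellation occurs at weight $\le p_0+2$ places $L$ in ${\mathfrak L}_{p_0}$. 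For $\overline{L'}$ one must instead show $a_1+a_2(p_0+3)\not\equiv0\pmod q$ whenever $1\le|a_1|+|a_2|\le p_0+3$, so that $L'\in{\mathfrak L}_{p_0+1}$.

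The main obstacle is precisely this last verification: a congruence $a_1+a_2(p_0+3)\equiv0\pmod q$ need not come from a genuine integer relation, and one must rule out accidental wraparound. Here the size hypothesis on $q$ does the work. On the relevant range one has $|a_1+a_2(p_0+3)|\le (p_0+3)^2<2q$, so a congruence to $0$ forces $a_1+a_2(p_0+3)$ to equal $0$ or $\pm q$; the honest integer relation requires weight $\ge p_0+4$, and the value $\pm q$ can be attained only at the extreme $|a_2|=p_0+3,\,a_1=0$, which would demand $q=(p_0+3)^2$ --- impossible since $q$ is prime. This is the point at which primality of $q$, and not merely its size, enters, and the same estimate disposes of the remaining membership checks.

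Finally, with both $L,L'\in{\mathfrak L}_{p_0}(q;m)$ and $L'\in{\mathfrak L}_{p_0+1}(q;m)$ but $L\notin{\mathfrak L}_{p_0+1}(q;m)$, Theorem~\ref{thm:mainIkeda} (applied with the roles of its $L$ and $L'$ interchanged) yields that $L$ and $L'$ are $p$-isospectral for all $0\le p\le p_0$ yet are not $(p_0+1)$-isospectral, which is exactly the assertion of the corollary.
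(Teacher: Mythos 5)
Your proposal is correct and follows exactly the paper's route: the same choice of prime $q>(p_0+2)(p_0+3)+1$, $m=(q-5)/2$, the same pair with duals $\overline L=L(q;1,p_0+2)$, $\overline{L'}=L(q;1,p_0+3)$, and the same appeal to Theorem~\ref{thm:mainIkeda}. The only difference is that you carry out the filtration-membership verifications (ruling out integer relations and wraparound mod $q$) that the paper leaves as ``one can check,'' and your estimates there are sound.
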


\section{Isospectral lens spaces and $\norma{\cdot}$-isospectral lattices}

In this section we will explain the remarkable relation between isospectrality of lens spaces and isospectrality of integral lattices with respect to the one-norm, introduced in \cite{LMRhodge}.
Using this connection we were able to find examples of lens spaces $p$-isospectral for all $p$ which are not coming from Sunada's method, and  are far from being strongly isospectral.
Indeed, we present an infinite family of pairs of $5$-dimensional lens spaces with these properties,
and as a byproduct, an infinite family of such pairs in increasing dimensions.
Finally, by means of a \emph{finite-implies-infinite} principle (see \cite[\S4]{LMRhodge}), we find ---with the help of the computer--- many more such pairs in low dimensions.

We first present the main ideas and the results without proofs, so that the reader can get to them quickly, and after this we develop the mathematical arguments supporting these results.
We naturally associate to a lens space $L=L(q;s_1,s_2,\dots,s_m)$ of dimension $2m-1$ the integral lattice $\mathcal L=\mathcal L(q;s_1,\dots,s_m)$ of rank $m$ given by the congruence equation
\begin{equation}\label{eq:mathcalL}
   (a_1,\dots,a_m) \in\mathcal L \iff  a_1s_1+\dots +a_ms_m\equiv0\pmod q.
\end{equation}
We call a lattice of this kind a \emph{congruence lattice}.
We consider the one norm, i.e.\ $\norma{(a_1,\dots,a_m)}:=|a_1|+\dots+|a_m|$.
By using Proposition~\ref{prop3:lens-isom} it is easy to prove that \emph{two lens spaces are isometric if and only if their associated congruence lattices are $\norma{\cdot}$-isometric} (see \cite[Prop.~3.3]{LMRhodge}).
It was surprising to discover that the isospectrality of two lens spaces is directly connected with the isospectrality in one-norm of the associated lattices (see \cite[Thm.~3.9(i)]{LMRhodge}).

\begin{thm}\label{thm:0-characterization}
Two lens spaces are $0$-isospectral if and only if the associated congruence lattices are $\norma{\cdot}$-isospectral.
\end{thm}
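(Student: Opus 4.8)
The plan is to run both isospectrality conditions through a single generating function and match them up to a fixed, group-independent factor. Let $L=\Gamma\ba S^{2m-1}$ and $L'=\Gamma'\ba S^{2m-1}$ be the two lens spaces; since they are being compared, they have the same dimension, hence the same $m$. By Corollary~\ref{cor:isosp-dimV^Gamma}~(i), $L$ and $L'$ are $0$-isospectral if and only if $\dim V_{\pi_{k\varepsilon_1}}^\Gamma=\dim V_{\pi_{k\varepsilon_1}}^{\Gamma'}$ for all $k$, i.e.\ if and only if the full series $\widehat F_\Gamma^0(z):=\sum_{k\ge0}\dim V_{\pi_{k\varepsilon_1}}^\Gamma\,z^k=F_\Gamma^0(z)+1$ agree. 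On the lattice side I introduce the one-norm theta series $\Theta_{\mathcal L}(z):=\sum_{a\in\mathcal L}z^{\norma{a}}=\sum_{k\ge0}N_{\mathcal L}(k)\,z^k$, where $N_{\mathcal L}(k)$ counts the vectors of $\mathcal L$ with one-norm equal to $k$; two congruence lattices are $\norma{\cdot}$-isospectral precisely when their theta series coincide. Thus the whole theorem reduces to producing one invertible, $\Gamma$-independent factor linking $\widehat F_\Gamma^0$ and $\Theta_{\mathcal L}$.

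To find that factor I would compute $\Theta_{\mathcal L}(z)$ directly from \eqref{eq:mathcalL}. Detecting the congruence $\sum_j a_js_j\equiv0\pmod q$ by the orthogonality relation $\frac1q\sum_{l=0}^{q-1}\xi^{ln}$, which equals $1$ if $q\mid n$ and $0$ otherwise, and interchanging the finite sum with the sum over $\mathcal L$, factors the computation over coordinates:
\[
\Theta_{\mathcal L}(z)=\frac1q\sum_{l=0}^{q-1}\prod_{j=1}^m\Big(\sum_{a\in\Z}\xi^{ls_j a}\,z^{|a|}\Big).
\]
The one-variable identity $\sum_{a\in\Z}u^a z^{|a|}=\frac{1-z^2}{(1-uz)(1-u^{-1}z)}$, obtained by summing two geometric series and clearing denominators, applied with $u=\xi^{ls_j}$ gives
\[
\Theta_{\mathcal L}(z)=\frac1q\sum_{l=0}^{q-1}\frac{(1-z^2)^m}{\det(1-z\gamma^l)},
\]
using $\det(1-z\gamma^l)=\prod_{j=1}^m(1-\xi^{ls_j}z)(1-\xi^{-ls_j}z)$, which coincides with the denominator in \eqref{eq:F^0-lens}. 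Comparing with Ikeda's formula in the lens case, namely $\widehat F_\Gamma^0(z)=\frac1q\sum_{l}\frac{1-z^2}{\det(1-z\gamma^l)}$ (which is exactly \eqref{eq:F^0-lens} after moving the $-1$), every summand differs by the single factor $(1-z^2)^{m-1}$, independent of $l$, yielding the key identity
\[
\Theta_{\mathcal L}(z)=(1-z^2)^{m-1}\,\widehat F_\Gamma^0(z).
\]

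The conclusion is then immediate. Because $L$ and $L'$ share the same $m$, the factor $(1-z^2)^{m-1}$ is the same for both, and as a power series it is a unit (constant term $1$); hence $\Theta_{\mathcal L}=\Theta_{\mathcal L'}$ if and only if $\widehat F_\Gamma^0=\widehat F_{\Gamma'}^0$. Combining this with the two characterizations above proves the equivalence of $0$-isospectrality and $\norma{\cdot}$-isospectrality. The substantive points I expect are bookkeeping rather than conceptual: first, recognizing that the absolute value in the one-norm produces exactly one factor of $(1-z^2)$ per coordinate, whereas the harmonic-polynomial series \eqref{eq:F^0-lens} carries only a single such factor — this is the very mismatch that makes the clean relation possible; and second, insisting that both lens spaces have the same $m$, so that the transcendental factor genuinely cancels instead of merely being absorbed. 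Establishing the one-coordinate summation identity and matching denominators against \eqref{eq:F^0-lens} are the only real computations, and both are short.
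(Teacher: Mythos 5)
Your proof is correct, and it takes a genuinely different route from the paper's. The paper argues representation-theoretically: it quotes the weight-multiplicity formula \eqref{eq:mult-ke_1} (the multiplicity of $\eta$ in $\pi_{k\varepsilon_1}$ depends only on $\norma{\eta}$, with binomial values), sums it over $\mathcal L_\Gamma$ to obtain \eqref{eq:mult-tau_0}, from which the ``if'' direction is immediate, and then proves the ``only if'' direction by induction on $k$. You instead stay entirely inside Ikeda's generating-function framework: taking \eqref{eq:F^0converges}/\eqref{eq:F^0-lens} as the spectral input, you compute the one-norm theta series $\Theta_{\mathcal L}(z)=\sum_{a\in\mathcal L}z^{\norma{a}}$ by character orthogonality and the two-sided geometric series, arriving at
\[
\Theta_{\mathcal L}(z)=(1-z^2)^{m-1}\,\widehat F_\Gamma^0(z),
\]
so that invertibility of the $\Gamma$-independent unit $(1-z^2)^{m-1}$ in the power-series ring yields both implications at once, replacing the paper's induction. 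The two arguments are close cousins: since $\sum_{r\geq0}\binom{r+m-2}{m-2}z^{2r}=(1-z^2)^{-(m-1)}$, your identity is exactly the generating-function form of \eqref{eq:mult-tau_0}; what differs is the key input --- Ikeda's Molien-type formula, stated in Section~\ref{sec:Ikeda} but not reproved there, versus the multiplicity lemma of \cite{LMRhodge} --- and the packaging of the converse. Your route is shorter, more elementary, and symmetric in the two directions (note you correctly use the normalization of \eqref{eq:F^0-lens}, i.e.\ constant term $\dim V_{\pi_0}^\Gamma=1$, so $\widehat F_\Gamma^0=F_\Gamma^0+1$; the final term of \eqref{eq:F^0converges} as printed should be read accordingly, and your identity of functions on $|z|<1$ does give equality of all coefficients). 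What the paper's route buys is that it scales: the proof of Theorem~\ref{thm:p-characterization} needs the finer regularity of $m_{\pi_{k,p}}(\eta)$ in terms of $\norma{\eta}$ together with the number of zero coordinates, a setting where no closed formula as clean as \eqref{eq:F^0converges} is available, and where the coefficient-level identity \eqref{eq:mult-tau_0} is the template that generalizes.
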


If one considers one individual $p$ only, $p$-isospectrality for two lens spaces,  does not correspond to a clean and neat condition on the associated lattices, as in the previous theorem for $p=0$.
However, the condition of being $p$-isospectral for all $p$ simultaneously turns out to correspond ---again as a happy surprise--- to a nice geometric condition on the associated lattices, which we call \emph{$\norma{\cdot}^*$-isospectrality}:
for each $k$ and $\zz$, both lattices must have the same number of vectors with one-norm equal to $k$ and $\zz$ zero coordinates.

\begin{thm}\label{thm:p-characterization}
Two lens spaces are $p$-isospectral for all $p$ if and only if the associated congruence lattices are $\norma{\cdot}^*$-isospectral.
\end{thm}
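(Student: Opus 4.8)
The plan is to play the lattice theta function that records the one-norm and the number of zero coordinates against Ikeda's generating function $\QQ_\Gamma(w,z)$ of Theorem~\ref{thm:allpIkeda}. Note first that $p$-isospectrality for all $p$ includes $p=0$, hence forces equal volume and dimension; since $\mathrm{vol}(L(q;s))=\mathrm{vol}(S^{2m-1})/q$, the two lens spaces share the same $q$ and $m$, and we write them as $L(q;s)=\Gamma\ba S^{2m-1}$ and $L(q;s')=\Gamma'\ba S^{2m-1}$. I would encode $\norma{\cdot}^*$-isospectrality in the two-variable series
$$\Theta_{\mathcal L}(x,y)=\sum_{v\in\mathcal L}x^{\norma{v}}\,y^{\#\{i:\,v_i=0\}}=\sum_{k,\zz}N_{\mathcal L}(k,\zz)\,x^k y^{\zz},$$
so that $\mathcal L$ and $\mathcal L'$ are $\norma{\cdot}^*$-isospectral if and only if $\Theta_{\mathcal L}=\Theta_{\mathcal L'}$, while Theorem~\ref{thm:allpIkeda} says the lens spaces are $p$-isospectral for all $p$ if and only if $\QQ_\Gamma=\QQ_{\Gamma'}$. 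The goal is thus to connect $\Theta_{\mathcal L}$ and $\QQ_\Gamma$ by a single substitution valid for every lens space.

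First I would detect the congruence $a_1s_1+\dots+a_ms_m\equiv0\pmod q$ defining $\mathcal L$ by the orthogonality relation $\tfrac1q\sum_{l=0}^{q-1}\xi^{\,l(a_1s_1+\dots+a_ms_m)}$ ($\xi=e^{2\pi i/q}$), which equals $1$ on $\mathcal L$ and $0$ off it, and then interchange summation so that $\Theta_{\mathcal L}$ factors over coordinates:
$$\Theta_{\mathcal L}(x,y)=\frac1q\sum_{l=0}^{q-1}\prod_{i=1}^m g(x,y,\xi^{s_il}),\qquad g(x,y,t)=y+\sum_{n\geq1}(xt)^n+\sum_{n\geq1}(x/t)^n=y+\frac{cx-2x^2}{1-cx+x^2},$$
with $c=t+t^{-1}$. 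The key algebraic step is to turn each factor $g$ into a factor of $\QQ_\Gamma$. Both $g$ and the $\QQ$-factor $\frac{w^2-cw+1}{z^2-cz+1}=\frac{(w-\xi^{s_il})(w-\xi^{-s_il})}{(z-\xi^{s_il})(z-\xi^{-s_il})}$ are \emph{fractional-linear} functions of $c$, so I would demand $g(x,y,t)=\phi\,\frac{w^2-cw+1}{z^2-cz+1}$ identically in $c$ and solve for $z$, $w$, and a scalar $\phi$ by comparing the two coefficient pairs up to a common proportionality constant.

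This matching is solvable: the denominators force $(z-x)(xz-1)=0$, and choosing $z=1/x$ one finds a lens-space-independent scalar $\phi=\phi(x,y)$ together with $w=w(x,y)$ determined by
$$w+w^{-1}=\frac{y(1+x^2)-2x^2}{x(y-1)},\qquad z=1/x.$$
Crucially none of $\phi,w,z$ depends on $s$. Taking products over $i$ and summing over $l$ collapses to the single identity
$$\Theta_{\mathcal L}(x,y)=\frac{\phi(x,y)^m}{q}\,\QQ_\Gamma\!\big(w(x,y),\,1/x\big),$$
valid for every lens space of the fixed type. Since $\phi^m/q$ is a nonzero common factor and the map $(x,y)\mapsto(w,1/x)$ is dominant — as $x$ varies $z=1/x$ sweeps an open set, and for each $x$ the Möbius expression in $y$ lets $w+w^{-1}$, hence $w$, take all values — equality of the two sides for $\mathcal L,\mathcal L'$ is equivalent to $\QQ_\Gamma=\QQ_{\Gamma'}$ on an open set, hence everywhere by rationality. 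Combining with Theorem~\ref{thm:allpIkeda} closes the chain: $\norma{\cdot}^*$-isospectral $\iff\Theta_{\mathcal L}=\Theta_{\mathcal L'}\iff\QQ_\Gamma=\QQ_{\Gamma'}\iff p$-isospectral for all $p$.

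I expect the fractional-linear matching and its bookkeeping to be the main obstacle. One must check that the comparison genuinely admits a solution with $\phi,w,z$ free of $s$ — the proportionality constant and the extra scalar $\phi$ supply precisely the degrees of freedom that a naive denominator-only match lacks (which is why an attempt to match numerators after fixing the scale by the denominators alone fails). One must also handle convergence domains carefully: $\Theta_{\mathcal L}$ converges for $|x|<1$, whereas $z=1/x$ forces $|z|>1$, where $\QQ_\Gamma$ is still a well-defined rational function; I would therefore either argue on $\{0<|x|<1\}$ and invoke that rational functions agreeing on an open set agree identically, or carry out the entire computation formally. A secondary point to pin down is the reduction to common $q$ and $m$, which legitimizes treating $\phi^m/q$ as a shared factor.
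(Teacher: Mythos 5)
Your proof is correct, but it takes a genuinely different route from the paper's. The paper never invokes Ikeda's function $\QQ_\Gamma(w,z)$ for this theorem: it argues through Corollary~\ref{cor:isosp-dimV^Gamma}~(iii), reducing the statement to the equalities $\dim V_{\pi_{k,p}}^{\Gamma}=\dim V_{\pi_{k,p}}^{\Gamma'}$ for all $k,p$, and the engine is the representation-theoretic regularity lemma of \cite[Lem.~3.7]{LMRhodge} (two weights with the same one-norm and the same number of zero coordinates have equal multiplicity in $\pi_{k,p}$), which yields the expansion $\dim V_{\pi_{k,p}}^{\Gamma}=\sum_{r,\zz}m_{\pi_{k,p}}(\mu_{r,\zz})\,N_{\mathcal L}(k+p-2r,\zz)$; the ``if'' direction is then immediate and the ``only if'' direction is done by induction on $k$. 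You instead take Theorem~\ref{thm:allpIkeda} as the criterion for $p$-isospectrality for all $p$, encode $\norma{\cdot}^*$-isospectrality in the series $\Theta_{\mathcal L}(x,y)$, and link the two by the substitution $z=1/x$, $w+w^{-1}=\frac{y(1+x^2)-2x^2}{x(y-1)}$. I verified the fractional-linear matching: proportionality of the denominator rows does force $(z-x)(1-xz)=0$; the choice $z=1/x$ gives proportionality factor $x^2$ and $\phi w=(y-1)/x$; and the two-valuedness of $w$ is harmless, since $\QQ_\Gamma(1/w,z)=w^{-2m}\QQ_\Gamma(w,z)$ while $\phi^m$ picks up the compensating factor $w^{2m}$, so both branches give the same identity $\Theta_{\mathcal L}(x,y)=\frac{\phi(x,y)^m}{q}\,\QQ_\Gamma\big(w(x,y),1/x\big)$. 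Your dominance argument and the reduction to common $(q,m)$ (Weyl asymptotics in one direction; lattice rank and index in the other) are also sound. As for what each approach buys: yours is shorter and bypasses the hardest representation-theoretic input, but it uses Theorem~\ref{thm:allpIkeda} as a black box and is close in spirit to the generating-function techniques of Ikeda and of DeFord--Doyle discussed in Section~5; the paper's argument is self-contained in representation theory, and its explicit multiplicity formula is what powers the rest of the paper --- the finite-check principle behind the tables, and the $p=0$ case (Theorem~\ref{thm:0-characterization}) by the same mechanism --- so the two proofs are complementary rather than interchangeable in the larger scheme.
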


The proofs of these theorems use representation theory of compact Lie groups and properties of the weight lattice.
The ideas are given in the next subsection.

The basic example of $\norma{\cdot}^*$-isospectral congruence lattices is the pair
\begin{equation}\label{eq:basicexample}
\mathcal L (49;\,1,\,6,\,15) \quad\textrm{ and } \quad\mathcal L(49;\,1,\,6,\,20).
\end{equation}
These 3-dimensional lattices produce two 5-dimensional non-isometric lens spaces which in light of the previous theorem are $p$-isospectral for every $p$.
This example is the first one of the following infinite family of pairs
\begin{equation}\label{eq:infinitefamily0}
\mathcal L(r^2t;\,1,\,rt-1,\,2rt+1)  \quad\textrm{ and }\quad \mathcal  L(r^2t;\,1,\,rt-1,\,3rt-1),
\end{equation}
for $r,t\in\N$ with $r$ not divisible by~$3$ (see \cite[Thm.~6.3]{LMRhodge}).
For some purposes, it will be convenient to write this in the following equivalent way
\begin{equation}\label{eq:infinitefamily}
\mathcal L(r^2t;\,1,\,1+rt,\,1+3rt)  \quad\textrm{ and }\quad \mathcal  L(r^2t;\,1,\,1-rt,\,1-3rt),
\end{equation}

Fraom these examples, it is possible to construct examples in arbitrarily high dimensions by using Proposition~\ref{prop:dualityq=r^2}.
In this way, we obtain for each pair of $(2m-1)$-dimensional lens spaces in our family, another pair of $(2h-1)$-dimensional lens spaces with $2h+2m=\phi(q)$, $q=r^2t$, that are again $p$-isospectral for every $p$.
For example when $t=1$ and $r$ is prime, the dimension increases from 5 to $2h-1=r^2-r-7$.
For the basic pair when $r=7$ one has that $h=18$ and $2h-1= 35$.

These examples are not the only existing ones, as one can guess.
We proved in \cite[Thm.\ 4.2]{LMRhodge} that to check $\norma{\cdot}^*$-isospectrality, it suffices to check it in a finite cube, which means that only finitely many computations are enough to ensure $p$-isospectrality for all $p$ of the lens spaces.
By using this, with the help of a computer, we found many more examples.
Moreover, one can find \emph{all} the existing examples for  given fixed $m$ and $q$ (see Tables~\ref{table:n=5}, \ref{table:n=7} and \ref{table:n=9}).
However, the computing time grows rapidly with $m$.

\subsection{Characterization theorems}

In this subsection we give the ideas leading to Theorems~\ref{thm:0-characterization} and \ref{thm:p-characterization} and sketch their proofs. The approach is based on representation theory of compact Lie groups.

From Section~\ref{sec:specspherforms},  $G=\SO(2m)$, $K=\SO(2m-1)$, $T$ is the standard maximal torus of $G$, and $P(G)=\bigoplus_{j=1}^m \Z\varepsilon_j\simeq \Z^m$ is the weight lattice of $G$.
As we have seen in Theorem~\ref{thm:tau-spectrum}, the $\tau$-spectrum of $\Gamma\ba S^{2m-1}$ is determined by the numbers $\dim V_\pi^\Gamma$ for every $\pi\in\widehat G$ such that $[\tau:\pi]=1$.
Any $\pi\in\widehat G$ decomposes as a sum of weight spaces under the action of $T$ as
$$
V_\pi=\sum_{\eta\in P(G)} V_\pi(\eta).
$$
The multiplicity of a weight $\eta\in P(G)$ in $\pi$ is $m_\pi(\eta):=\dim V_\pi(\eta)$.
If $\Gamma\subset T$, it follows that
\begin{equation}\label{eq:dimV_pi}
\dim V_\pi^\Gamma = \sum_{\eta\in P(G)} \dim V_\pi(\eta)^\Gamma = \sum_{\eta\in \mathcal L_\Gamma} m_\pi(\eta)
\end{equation}
where $\mathcal L_\Gamma=\{\eta\in P(G): \gamma^\eta=1\quad\forall\,\gamma\in\Gamma\}$, which is a sublattice of $P(G)\simeq\Z^m$ depending only on $\Gamma$ but not on $\pi$.
Here $\gamma^\eta$ denotes the scalar for which $\gamma$ acts on $V_\pi(\eta)$.

A lens space $L(q;s_1,\dots,s_m)=\Gamma\ba S^{2m-1}$ satisfies that $\Gamma\subset T$ since it is generated by $\gamma\in T$ as in \eqref{eq:gamma}.
Since $\gamma^\eta = e^{2\pi i\frac{a_1s_1+\dots+a_ms_m}{q}}$ for
$\eta=\sum_ja_j\varepsilon_j\in P(G)$,
we have that $\mathcal L_\Gamma = \mathcal L(q;s_1,\dots,s_m)$ defined in \eqref{eq:mathcalL}.

\begin{proof}[Sketch of proof of Theorem~\ref{thm:0-characterization}.]
One can show that (see \cite[Lemma~3.6]{LMRhodge}), when $\pi=\pi_{k\varepsilon_1}$ is the irreducible representation of $\SO(2m)$ with highest weight $k\varepsilon_1$, the multiplicity of $\eta\in\Z^m$ in $\pi$ is
\begin{equation}\label{eq:mult-ke_1}
m_{\pi_{k\varepsilon_1}}(\eta) =
\begin{cases}
\binom{r+m-2}{m-2} & \text{ if }\, \norma{\eta}=k-2r \;\text{ with } r\in \N_0,\\
0 & \text{ otherwise.}
\end{cases}
\end{equation}
In particular, $m_{\pi_{k\varepsilon_1}}(\eta)$ depends only on $\norma{\eta}$.
From \eqref{eq:dimV_pi} and \eqref{eq:mult-ke_1} it follows that
\begin{align}
\dim V_{\pi_{k\varepsilon_1}}^\Gamma
    &= \sum_{r=0}^{[k/2]} \sum_{\eta\in \mathcal L:\atop \norma{\eta}=k-2r} \tbinom{r+m-2}{m-2} \label{eq:mult-tau_0}\\
    &= \sum_{r=0}^{[k/2]}\tbinom{r+m-2}{m-2} \;\#\{\eta\in\mathcal L_\Gamma: \norma{\eta}=k-2r\}.
    \notag
\end{align}
Moreover, by Theorem~\ref{thm:p-spectrum}, this number is exactly the multiplicity of the eigenvalue $\lambda_k=k^2+k(2m-2)$ of the Laplace-Beltrami operator $\Delta_{\tau_0,\Gamma}$ on $\Gamma\ba S^{2m-1}$.
This clearly shows that two lens spaces are $0$-isospectral if their associated lattices are $\norma{\cdot}$-isospectral, thus proving the converse assertion in Theorem~\ref{thm:0-characterization}.
The remaining implication is proved by induction on $k$ (see \cite[Thm.~3.9(i)]{LMRhodge}).
\end{proof}

\begin{proof}[Sketch of proof of Theorem~\ref{thm:p-characterization}.]
We proceed as in the previous theorem but in this case there are more difficulties.
By Corollary~\ref{cor:isosp-dimV^Gamma}~(iii), we have to show that $\dim V_{\pi_{k,p}}^{\Gamma}=\dim V_{\pi_{k,p}}^{\Gamma'}$ for all $k$ and $p$, where $\pi_{k,p}$ is as in Section~\ref{sec:specspherforms}.
Now, $\dim V_{\pi_{k,p}}^{\Gamma} = \sum_{\eta \in \mathcal L} m_{\pi_{k,p}}(\eta)$, but in this case we do not know an explicit formula like \eqref{eq:mult-ke_1} for $m_{\pi_{k,p}}(\eta)$ for arbitrary $k$ and $p$.
Fortunately, this difficulty could be overcome thanks to the following cute regularity property of the multiplicities:
\begin{quote}
\emph{two weights with the same one-norm and the same number of zero coordinates have the same multiplicity}.
\end{quote}
This was proved in \cite[Lem.~3.7]{LMRhodge} with techniques of representation theory of compact Lie groups.

Analogously to \eqref{eq:mult-ke_1}, by using the above property we obtain that (see \cite[Thm.~3.8]{LMRhodge})
\begin{equation}\label{eq:dim V_k,p^Gamma}
\dim V_{\pi_{k,p}}^{\Gamma}=
    \sum_{r=0}^{[(k+p)/2]} \;\sum_{\zz =0}^m \; m_{\pi_{k,p}}(\mu_{r,\zz })\; N_{\mathcal L}(k+p-2r,\zz ),
\end{equation}
where $\mu_{r,\zz }$ is any weight in $\mathcal L$ with $\norma{\mu_{r,\zz }} = k+p-2r$ and having $\zz$ zero coordinates, and $N_{\mathcal L}(r,\zz )$ denotes the number of weights with $\zz$ zero coordinates and one-norm equal to $r$.
Now, clearly, the converse of Theorem~\ref{thm:p-characterization} follows.
The other assertion can again be proved by induction on $k$ (see \cite[Thm.~3.9(ii)]{LMRhodge}).
\end{proof}

\subsection{Construction of $\norma{\cdot}^*$-isospectral lattices}\label{subsec:construction}
The characterization of lens spaces $p$-isospectral for all $p$ given in Theorem~\ref{thm:p-characterization},
motivated us to look for examples of $\norma{\cdot}^*$-isospectral congruence lattices.
It seems interesting that one can work on the construction of such examples by just working on lattices,
without any use of differential geometry.

For $r,t\in\N$, $r>1$ not divisible by $3$, we set $q=r^2t$ and consider the lattices in \eqref{eq:infinitefamily},
$\mathcal L = \mathcal L(q; 1,1+rt,3rt+1)$ and $\mathcal L' = \mathcal L(q; 1,1-rt,1-3rt)$.
This is an infinite two-parameter family of pairs of $\norma{\cdot}^*$-isospectral lattices in $\Z^m$
for $m=3$ (\cite[Thm.\ 6.3]{LMRhodge}).
For $r\ge 7$ they are not $\norma{\cdot}$-isometric (see \cite[Lemma 5.4]{LMRhodge}).
We note that the dimension $m=3$ of these examples is minimal, since
Ikeda and Yamamoto showed that such pairs cannot exist in dimension $m=2$ (\cite{IY}, \cite{Y}).

The first step in the proof is to reduce the problem to show that the lattices are just $\norma{\cdot}$-isospectral, since one can verify that, for $1\leq \zz\leq 3$, the number of elements in $\mathcal L$ and $\mathcal L'$ with $\zz$
zero coordinates and a fixed one-norm coincide (see \cite[Lemma 6.1]{LMRhodge}).
This implies, for the family of pairs $L=L(r^2t; 1,1+rt,1+3rt), L'=L(r^2t; 1,1-rt,1-3rt)$, the pleasant fact in spectral geometry that:
\emph{$L$ and $L'$ are $p$-isospectral for all $p$ if and only if $L$ and $L'$ are $0$-isospectral}.

According to the previous paragraph, it is sufficient to check that $\mathcal L$ and $\mathcal L'$ are $\norma{\cdot}$-isospectral.
More precisely, $N_{\mathcal L}(k,\zz) = N_{\mathcal L'}(k,\zz)$, where $N_{\mathcal L}(k,\zz)$ denotes the number of $\eta\in\mathcal L$ with $\norma{\eta}=k$ and $\zz$ zero coordinates.
By a careful calculation of these numbers, we check they coincide.

\subsection{Examples in arbitrarily large dimensions}
We will show that the infinite family of pairs in dimension $5$ given in the previous section allows to produce an infinite family of pairs in arbitrarily large dimensions. For this, we prove an extension of Proposition~\ref{prop:duality} for $q=r^2$, $r$ prime.

We recall from \eqref{eq:L_0(q;m)} that $\mathfrak L_0(q,m)$ stands for the set of lens spaces of dimension $2m-1$, fundamental group of order $q$ and different parameters.
For $L=\Gamma\ba S^{2m-1}$, the function $\QQ_L(w,z):=\QQ_\Gamma(w,z)$ given in \eqref{eq:allpgenerating} characterizes all $p$-spectrum.
If $L=L(q;s_1,\dots,s_m)\in\mathfrak L_0(q,m)$ then
$
\QQ_L(w,z)=\sum_{k=0}^{q-1} \frac{\det(w-\gamma^k)}{\det(z-\gamma^k)}
$
since $\Gamma =\{\gamma^k: 0\leq k\leq q-1\}$.

\begin{prop}\label{prop:dualityq=r^2}
Let $q=r^2$ with $r$ prime and let $L=L(q;s_1,\dots,s_m)$ and $L'=(q;s_1,\dots,s_m)$ be lens spaces in $\mathfrak L_0(q,m)$ such that $s_j\equiv\pm1\pmod r$ and $s_j'\equiv \pm1\pmod r$ for all $j$. Then, $L$ and $L'$ are $p$-isospectral for all $p$ if and only if $\overline L$ and $\overline{L'}$ are $p$-isospectral for all $p$.
\end{prop}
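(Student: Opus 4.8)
The plan is to reduce the statement to an identity of rational functions via Ikeda's characterization. By Theorem~\ref{thm:allpIkeda}, $L$ and $L'$ are $p$-isospectral for all $p$ if and only if $\QQ_L(w,z)=\QQ_{L'}(w,z)$, and likewise $\overline L,\overline{L'}$ are $p$-isospectral for all $p$ if and only if $\QQ_{\overline L}(w,z)=\QQ_{\overline{L'}}(w,z)$; so it suffices to prove $\QQ_L=\QQ_{L'}\iff\QQ_{\overline L}=\QQ_{\overline{L'}}$. Write $\xi=e^{2\pi i/q}$ with $q=r^2$, and for a generator $\gamma$ put $f^k(w):=\det(w-\gamma^k)=\prod_{j=1}^m(w-\xi^{s_jk})(w-\xi^{-s_jk})$, so that $\QQ_L(w,z)=\sum_{k=0}^{q-1} f^k(w)/f^k(z)$. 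First I would split this sum according to whether the index $k$ is a unit modulo $r^2$ (that is, $r\nmid k$) or not.

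For the non-unit indices $k=rk'$ with $1\le k'\le r-1$, the hypothesis $s_j\equiv\pm1\pmod r$ is decisive: since $\xi^{\pm s_j rk'}=e^{\pm 2\pi i s_jk'/r}=\zeta^{\pm k'}$ with $\zeta=e^{2\pi i/r}$, every factor collapses and $f^{rk'}(w)=\big[(w-\zeta^{k'})(w-\zeta^{-k'})\big]^m$, which is independent of the particular parameters $s_1,\dots,s_m$. Together with the $k=0$ term $(w-1)^{2m}/(z-1)^{2m}$, this shows that the whole non-unit part of $\QQ_L$ coincides with that of $\QQ_{L'}$. Consequently $\QQ_L=\QQ_{L'}$ holds if and only if the two unit-index parts, $U_L:=\sum_{r\nmid k} f^k_L(w)/f^k_L(z)$ and $U_{L'}$, agree.

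The bridge to the barred spaces uses the cyclotomic identity. For $k$ a unit, multiplication by $k$ permutes $(\Z/q)^*$, and since $\{\pm s_1,\dots,\pm s_m,\pm\bar s_1,\dots,\pm\bar s_h\}$ is a complete set of representatives of $(\Z/q)^*$, the combined product telescopes: $f^k_L(w)\,f^k_{\overline L}(w)=\prod_{\gcd(a,q)=1}(w-\xi^a)=\Phi_q(w)$. Hence $f^k_L(w)/f^k_L(z)=\frac{\Phi_q(w)}{\Phi_q(z)}\,f^k_{\overline L}(z)/f^k_{\overline L}(w)$, and summing over units gives $U_L(w,z)=\frac{\Phi_q(w)}{\Phi_q(z)}\,U_{\overline L}(z,w)$. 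As $\Phi_q(w)/\Phi_q(z)$ is a nonzero rational function, it cancels, and the harmless swap $w\leftrightarrow z$ yields $U_L=U_{L'}\iff U_{\overline L}=U_{\overline{L'}}$.

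To convert this back to full equality on the barred side I must still check that the non-unit parts of $\QQ_{\overline L}$ and $\QQ_{\overline{L'}}$ coincide, and this is the step I expect to be the main obstacle, since the $\bar s_j$ need not be congruent to $\pm1\pmod r$. The resolution is a counting argument: reduction modulo $r$ is a surjection $(\Z/r^2)^*\to(\Z/r)^*$ with fibers of size $r$, so in the complete system $\{\pm s_j,\pm\bar s_j\}$ each residue of $(\Z/r)^*$ occurs exactly $r$ times; moreover $\{\pm s_1,\dots,\pm s_m\}$ contributes each of $\pm1$ exactly $m$ times and nothing else, a count depending only on $m$. Therefore the complementary multiset of residues mod $r$ of $\{\pm\bar s_j\}$ is forced, hence identical for $\overline L$ and $\overline{L'}$; since multiplication by the unit $k'$ permutes residues mod $r$, this gives $f^{rk'}_{\overline L}=f^{rk'}_{\overline{L'}}$ for all $k'$, so the non-unit parts of $\QQ_{\overline L}$ and $\QQ_{\overline{L'}}$ agree and $U_{\overline L}=U_{\overline{L'}}\iff \QQ_{\overline L}=\QQ_{\overline{L'}}$. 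Chaining the equivalences and invoking Theorem~\ref{thm:allpIkeda} once more for the barred spaces completes the proof.
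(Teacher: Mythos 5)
Your proposal is correct and takes essentially the same approach as the paper's proof: both invoke Theorem~\ref{thm:allpIkeda}, split $\QQ_L(w,z)$ according to whether the exponent $k$ is a unit modulo $r$, use $s_j\equiv\pm1\pmod{r}$ to see that the non-unit part is independent of the parameters, and use the complementary-product identity $\det(w-\gamma^k)\det(w-\bar\gamma^k)=\Phi_q(w)$ for $\gcd(k,q)=1$ to express the unit part as $\frac{\Phi_q(w)}{\Phi_q(z)}$ times the corresponding data of $\overline L$ with $w$ and $z$ swapped. The only difference is the last step, where you dispose of the non-unit part of $\QQ_{\overline L}$ by counting residues modulo $r$ in the complete residue system, whereas the paper folds that part back into the $L$-independent function $\QQ_{L_0}$ via the identity $\det(z-\bar\gamma^{rl})\det(z-\gamma^{rl})=\Phi_r(z)^r$ --- an identity justified by exactly the same fiber count, so the two arguments are equivalent in content.
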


\begin{proof}
We will obtain in \eqref{eq:QQ} a useful relation connecting $\QQ_L(w,z)$ and $\QQ_{\bar L}(w,z)$ for $L=L(q;s_1,\dots,s_m)\in \mathfrak L_0(q,m)$ such that $s_j\equiv \pm1\pmod r$ for all $j$.
Here $\bar\gamma$ and $\bar s_1,\dots,\bar s_h$ are as in \S \ref{sec:Ikeda}, the paragraph before Prop.~\ref{prop:duality}.
We have
\begin{equation}\label{eq:Q1}
\QQ_L(w,z) =
\sum_{l=0}^{r-1} \frac{\det\big(w-{(\gamma^r)}^l\big)}{\det\big(z-{(\gamma^r)}^l\big)} +  \sum_{\gcd(k,r)=1} \frac{\det(w-\gamma^k)}{\det(z-\gamma^k)}.
\end{equation}
The eigenvalues of $\gamma^{r}$ are $e^{\pm2\pi i rs_j /r^2}=e^{\pm2\pi i /r}$ for $1\leq j\leq m$ since $r_j\equiv\pm1\pmod r$, thus
$
\sum_{l=0}^{r-1} \frac{\det(w-\gamma^{lr})}{\det(z-\gamma^{lr})} =
\QQ_{L_0}(w,z)
$
where $L_0$ stands for $\langle\gamma^r\rangle\ba S^{2m-1}$.
One can check that $L_0$ is isometric to $L(r;{1,\dots,1})$, hence $\QQ_{L_0}(w,z)$ does not depend on $L$.

Since $r$ is prime, $\det(z-\gamma^k)\det(z-\bar \gamma^k)$ is equal to $\Phi_{q}(z)$ if $\gcd(k,q)=1$, to $\Phi_{r}(z)^r$ if $\gcd(k,q)=r$, and to $(z-1)^{r^2-r}$ if $\gcd(k,q)=r^2$.
Hence,
\begin{multline}\label{eq:QQ}
\QQ_L(w,z)
=  \QQ_{L_0}(w,z) +  \frac{\Phi_{q}(w)}{\Phi_{q}(z)} \sum_{\gcd(k,r)=1} \frac{\det(z-\bar\gamma^k)}{\det(w-\bar\gamma^k)}=\\
=  \QQ_{L_0}(w,z) +
    \frac{\Phi_{q}(w)}{\Phi_{q}(z)} \left(\QQ_{\overline{L\!}}\,(z,w)-\frac{(z-1)^{2h}}{(w-1)^{2h}}-\sum_{l=1}^{r-1} \frac{\det\big(z-{(\bar\gamma^r)}^l\big)}{\det\big(w-{(\bar\gamma^r)}^l)\big)}\right)=\\
= \! \QQ_{L_0}(w,z) +
    \frac{\Phi_{q}(w)}{\Phi_{q}(z)} \!\left(\!\QQ_{\overline{L\!}}\,(z,w)\!-\!\frac{(z-1)^{2h}}{(w-1)^{2h}}\!-\!
    \frac{\Phi_{r}(z)^r}{\Phi_{r}(w)^{r}}
    \left(\!\QQ_{L_0}(w,z)-\frac{(w-1)^{2m}}{(z-1)^{2m}}\right)\!\right)\!.
\end{multline}
Thus, the last expression for $\QQ_{L}(w,z)$ involves $\QQ_{\bar L}(z,w)$ and other functions which do not depend on $L$.
This clearly shows that $\QQ_{L}(w,z)$ and $\QQ_{\overline L}(w,z)$ determine each other in this case.
In particular, $\QQ_{L}(w,z)=\QQ_{L'}(w,z)$ if and only if $\QQ_{\overline{L}}(w,z)=\QQ_{\overline{L'}}(w,z)$,
thus the assertion follows from Theorem~\ref{thm:allpIkeda}.
\end{proof}

As a corollary we can now state (\cite[Thm.~7.3]{LMRhodge})

\begin{thm}\label{thm:largedimension}
For any $n_0 \ge 5$, there exist pairs of non-isometric lens spaces of dimension $n$, with $n\geq n_0$, that are $p$-isospectral for all $p$.
\end{thm}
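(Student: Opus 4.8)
The plan is to derive Theorem~\ref{thm:largedimension} by combining the two-parameter infinite family of $5$-dimensional pairs from \S\ref{subsec:construction} with the dimension-raising mechanism provided by Proposition~\ref{prop:dualityq=r^2}. The starting point is the family \eqref{eq:infinitefamily}, namely the pairs $L=L(r^2t;1,1+rt,1+3rt)$ and $L'=L(r^2t;1,1-rt,1-3rt)$, which are $p$-isospectral for all $p$ and, for $r\ge 7$, non-isometric. To apply Proposition~\ref{prop:dualityq=r^2} we must specialize to $q=r^2$ with $r$ prime, so I would fix $t=1$ and take $r$ to be a prime with $r\ge 7$ (in fact $r$ not divisible by $3$, which is automatic for such primes). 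This yields genuinely non-isometric pairs $L,L'$ in $\mathfrak L_0(r^2,3)$ that are $p$-isospectral for all $p$.

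The key observation is that the parameters of these lens spaces satisfy the congruence hypothesis of Proposition~\ref{prop:dualityq=r^2}: one checks directly that $1\equiv\pm1$, $1\pm r\equiv\pm1$, and $1\pm 3r\equiv\pm1\pmod r$, so that $s_j\equiv\pm1\pmod r$ and likewise for the $s_j'$. Thus the proposition applies, and it tells us that the associated dual lens spaces $\overline L$ and $\overline{L'}$ of dimension $2h-1$, where $2h+2m=\phi(q)=\phi(r^2)=r^2-r$, are also $p$-isospectral for all $p$. With $m=3$ this gives $2h=r^2-r-6$, i.e.\ dimension $2h-1=r^2-r-7$ as indicated in the excerpt. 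The non-isometry of $\overline L$ and $\overline{L'}$ transfers from that of $L$ and $L'$ via the duality $\overline{L}\cong\overline{L'}\iff L\cong L'$ recorded in \S\ref{sec:Ikeda} (the analogue of \cite[Prop.~3.3]{Ik88} in this setting), so the dual pairs are non-isometric as well.

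Finally, to reach every target dimension $n\ge n_0$, I would invoke the unboundedness of the quantity $r^2-r-7$ as $r$ ranges over primes: given $n_0\ge 5$, either use the original $5$-dimensional pairs (when $n_0=5$) or choose a prime $r$ large enough that $r^2-r-7\ge n_0$, producing a non-isometric, all-$p$-isospectral pair of dimension $n=r^2-r-7\ge n_0$. Since there are infinitely many primes, this produces pairs in infinitely many distinct dimensions, all at least $n_0$, which is exactly the assertion.

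The main obstacle I anticipate is not in the duality argument itself---that is furnished ready-made by Proposition~\ref{prop:dualityq=r^2}---but in verifying cleanly that the dual pairs remain \emph{non-isometric}. One must confirm that the representatives $\bar s_1,\dots,\bar s_h$ completing the set $\{\pm s_1,\dots,\pm s_m,\pm\bar s_1,\dots,\pm\bar s_h\}$ to a full set of residues coprime to $q=r^2$ can be chosen consistently for $L$ and $L'$, and that the isometry-preserving correspondence $L\leftrightarrow\overline L$ genuinely reflects non-isometry upward. A secondary point requiring care is that the congruence hypothesis $s_j\equiv\pm1\pmod r$ is exactly what makes $\gamma^r$ have the uniform eigenvalues $e^{\pm 2\pi i/r}$ used in deriving \eqref{eq:QQ}; one should double-check that the family \eqref{eq:infinitefamily} with $t=1$ meets this condition, which it does, so no genericity assumption beyond $r\ge 7$ prime is needed.
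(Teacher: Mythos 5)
Your proposal is correct and follows essentially the same route as the paper: specialize the family \eqref{eq:infinitefamily} to $t=1$ and $r$ an odd prime $\geq 7$, apply Proposition~\ref{prop:dualityq=r^2} to transfer $p$-isospectrality for all $p$ to the dual pairs of dimension $2h-1=r^2-r-7$, and let $r\to\infty$. The only difference is that you make explicit two checks the paper leaves implicit --- that the parameters satisfy $s_j\equiv\pm1\pmod r$, and that non-isometry passes to the duals via the equivalence $L\cong L' \iff \overline L\cong\overline{L'}$ --- both of which are handled exactly as you suggest.
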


\begin{proof}
For each odd prime $r\geq7$ set $t=1$ and $q=r^2$.
The corresponding $5$-dimensional lens spaces $L,L'\in\mathfrak L_0(q;3)$ from \eqref{eq:infinitefamily} are $p$-isospectral for all $p$, by Theorem~\ref{thm:p-characterization}.
By Proposition~\ref{prop:dualityq=r^2}, $\overline L$ and $\overline {L'}$ are $p$-isospectral for all $p$ and have dimension $2h-1=\phi(r^2)-7 = r^2-r-7$. This quantity tends to infinity when $r$ does, thus the assertion in the theorem follows.
\end{proof}

We recall from \cite[Thm.~7.3]{LMRhodge} that, by using our $5$-dimensional examples, one can construct, in
\emph{every} dimension $n\geq5$, pairs of $n$-dimensional Riemannian manifolds that are $p$-isospectral for all $p$ and are not strongly isospectral.

\begin{rem}
We are interested in the question whether one can extend the duality property in Proposition~\ref{prop:dualityq=r^2}
for more general values of $q$.
(We have checked that all the pairs dual to the pairs in the tables remain $p$-isospectral for all $p$.)
\end{rem}

\subsection{Computations and tables}

\begin{table}
\caption{Pairs of lens spaces $p$-isospectral for all $p$ of dimension $n=5$ and fundamental group of order $q\leq 500$.}\label{table:n=5}
\begin{tabular}[t]{cccr@{,}r@{,}r}
$q$& $r$ & $t$ & $d_0$ & $d_1$ & $d_2$\\ \hline
 49 &  7 & 1 & 0& 1& 3 \\
 64 &  8 & 1 & 0& 1& 3 \\
 98 &  7 & 2 & 0& 1& 3 \\
100 & 10 & 1 & 0& 1& 3 \\
100 & 10 & 1 & 0& 1& 4 \\
121 & 11 & 1 & 0& 1& 3 \\
121 & 11 & 1 & 0& 1& 4 \\
121 & 11 & 1 & 0& 1& 5 \\
121 & 11 & 1 & 0& 2& 5 \\
121 & 11 & 1 & 0& 2& 6 \\
128 &  8 & 2 & 0& 1& 3 \\
147 &  7 & 3 & 0& 1& 3 \\
169 & 13 & 1 & 0& 1& 3 \\
169 & 13 & 1 & 0& 1& 4 \\
169 & 13 & 1 & 0& 1& 5 \\
169 & 13 & 1 & 0& 1& 6 \\
169 & 13 & 1 & 0& 2& 5 \\
169 & 13 & 1 & 0& 2& 6 \\
169 & 13 & 1 & 0& 2& 7 \\
169 & 13 & 1 & 0& 3& 7 \\
192 &  8 & 3 & 0& 1& 3 \\
196 & 14 & 1 & 0& 1& 3 \\
196 & 14 & 1 & 0& 1& 4 \\
196 & 14 & 1 & 0& 1& 5 \\
196 & 14 & 1 & 0& 1& 6 \\
196 & 14 & 1 & 0& 2& 5 \\
196 & 14 & 1 & 0& 2& 6 \\
196 &  7 & 4$\dag$& 0& 1& 3 \\
196 & 14 & 1 & 0& 3& 8 \\
200 & 10 & 2 & 0& 1& 3 \\
200 & 10 & 2 & 0& 1& 4 \\
242 & 11 & 2 & 0& 1& 3 \\
242 & 11 & 2 & 0& 1& 4 \\
242 & 11 & 2 & 0& 1& 5 \\
242 & 11 & 2 & 0& 2& 5 \\
242 & 11 & 2 & 0& 2& 6 \\
245 &  7 & 5 & 0& 1& 3 \\
256 & 16 & 1 & 0& 1& 3 \\
256 & 16 & 1 & 0& 1& 6 \\
256 & 16 & 1 & 0& 1& 7 \\
256 & 16 & 1 & 0& 2& 5 \\
256 & 16 & 1 & 0& 2& 6 \\
256 &  8 & 4$\dag$& 0& 1& 3 \\
256 & 16 & 1 & 0& 2& 7 \\
256 & 16 & 1 & 0& 3& 9 \\
289 & 17 & 1 & 0& 1& 3 \\
289 & 17 & 1 & 0& 1& 4 \\
289 & 17 & 1 & 0& 1& 5 \\
289 & 17 & 1 & 0& 1& 6 \\
289 & 17 & 1 & 0& 1& 7 \\
289 & 17 & 1 & 0& 1& 8 \\
289 & 17 & 1 & 0& 2& 5 \\
289 & 17 & 1 & 0& 2& 6 \\
\end{tabular}
\quad
\begin{tabular}[t]{cccr@{,}r@{,}r}
$q$& $r$ & $t$ & $d_0$ & $d_1$ & $d_2$\\ \hline
289 & 17 & 1 & 0& 2& 7 \\
289 & 17 & 1 & 0& 2& 8 \\
289 & 17 & 1 & 0& 2& 9 \\
289 & 17 & 1 & 0& 3& 7 \\
289 & 17 & 1 & 0& 3& 8 \\
289 & 17 & 1 & 0& 3& 9 \\
289 & 17 & 1 & 0& 4& 9 \\
289 & 17 & 1 & 0& 4&10 \\
294 &  7 & 6 & 0& 1& 3 \\
300 & 10 & 3 & 0& 1& 3 \\
300 & 10 & 3 & 0& 1& 4 \\
320 &  8 & 5 & 0& 1& 3 \\
324 & 18 & 1 & 0& 1& 5 \\
324 & 18 & 1 & 0& 1& 8 \\
324 & 18 & 1 & 0& 2& 7 \\
338 & 13 & 2 & 0& 1& 3 \\
338 & 13 & 2 & 0& 1& 4 \\
338 & 13 & 2 & 0& 1& 5 \\
338 & 13 & 2 & 0& 1& 6 \\
338 & 13 & 2 & 0& 2& 5 \\
338 & 13 & 2 & 0& 2& 6 \\
338 & 13 & 2 & 0& 2& 7 \\
338 & 13 & 2 & 0& 3& 7 \\
343 &  7 & 7 & 0& 1& 3 \\
361 & 19 & 1 & 0& 1& 3 \\
361 & 19 & 1 & 0& 1& 4 \\
361 & 19 & 1 & 0& 1& 5 \\
361 & 19 & 1 & 0& 1& 6 \\
361 & 19 & 1 & 0& 1& 7 \\
361 & 19 & 1 & 0& 1& 8 \\
361 & 19 & 1 & 0& 1& 9 \\
361 & 19 & 1 & 0& 2& 5 \\
361 & 19 & 1 & 0& 2& 6 \\
361 & 19 & 1 & 0& 2& 7 \\
361 & 19 & 1 & 0& 2& 8 \\
361 & 19 & 1 & 0& 2& 9 \\
361 & 19 & 1 & 0& 2&10 \\
361 & 19 & 1 & 0& 3& 7 \\
361 & 19 & 1 & 0& 3& 8 \\
361 & 19 & 1 & 0& 3& 9 \\
361 & 19 & 1 & 0& 3&10 \\
361 & 19 & 1 & 0& 4& 9 \\
361 & 19 & 1 & 0& 4&10 \\
361 & 19 & 1 & 0& 4&11 \\
361 & 19 & 1 & 0& 5&11 \\
363 & 11 & 3 & 0& 1& 3 \\
363 & 11 & 3 & 0& 1& 4 \\
363 & 11 & 3 & 0& 1& 5 \\
363 & 11 & 3 & 0& 2& 5 \\
363 & 11 & 3 & 0& 2& 6 \\
384 &  8 & 6 & 0& 1& 3 \\
392 & 14 & 2 & 0& 1& 3 \\
392 & 14 & 2 & 0& 1& 4 \\
\end{tabular}
\quad
\begin{tabular}[t]{cccr@{,}r@{,}r}
$q$& $r$ & $t$ & $d_0$ & $d_1$ & $d_2$\\ \hline
392 & 14 & 2 & 0& 1& 5 \\
392 & 14 & 2 & 0& 1& 6 \\
392 & 14 & 2 & 0& 2& 5 \\
392 & 14 & 2 & 0& 2& 6 \\
392 &  7 & 8$\dag$& 0& 1& 3 \\
392 & 14 & 2 & 0& 3& 8 \\
400 & 20 & 1 & 0& 1& 3 \\
400 & 20 & 1 & 0& 1& 7 \\
400 & 20 & 1 & 0& 2& 6 \\
400 & 10 & 4$\dag$& 0& 1& 3 \\
400 & 20 & 1 & 0& 2& 8 \\
400 & 10 & 4$\dag$& 0& 1& 4 \\
400 & 20 & 1 & 0& 2& 9 \\
400 & 20 & 1 & 0& 3& 9 \\
441 & 21 & 1 & 0& 1& 5 \\
441 & 21 & 1 & 0& 2&10 \\
441 & 21 & 1 & 0& 3& 9 \\
441 &  7 & 9$\dag$& 0& 1& 3 \\
448 &  8 & 7 & 0& 1& 3 \\
484 & 22 & 1 & 0& 1& 3 \\
484 & 22 & 1 & 0& 1& 4 \\
484 & 22 & 1 & 0& 1& 5 \\
484 & 22 & 1 & 0& 1& 6 \\
484 & 22 & 1 & 0& 1& 7 \\
484 & 22 & 1 & 0& 1& 8 \\
484 & 22 & 1 & 0& 1& 9 \\
484 & 22 & 1 & 0& 1&10 \\
484 & 22 & 1 & 0& 2& 5 \\
484 & 22 & 1 & 0& 2& 6 \\
484 & 11 & 4$\dag$& 0& 1& 3 \\
484 & 22 & 1 & 0& 2& 7 \\
484 & 22 & 1 & 0& 2& 8 \\
484 & 11 & 4$\dag$& 0& 1& 4 \\
484 & 22 & 1 & 0& 2& 9 \\
484 & 22 & 1 & 0& 2&10 \\
484 & 11 & 4$\dag$& 0& 1& 5 \\
484 & 22 & 1 & 0& 3& 7 \\
484 & 22 & 1 & 0& 3& 8 \\
484 & 22 & 1 & 0& 3& 9 \\
484 & 22 & 1 & 0& 3&10 \\
484 & 22 & 1 & 0& 3&12 \\
484 & 22 & 1 & 0& 4& 9 \\
484 & 22 & 1 & 0& 4&10 \\
484 & 11 & 4$\dag$& 0& 2& 5 \\
484 & 22 & 1 & 0& 4&12 \\
484 & 11 & 4$\dag$& 0& 2& 6 \\
484 & 22 & 1 & 0& 5&12 \\
484 & 22 & 1 & 0& 5&13 \\
484 & 22 & 1 & 0& 6&13 \\
490 &  7 &10 & 0& 1& 3 \\
500 & 10 & 5 & 0& 1& 3 \\
500 & 10 & 5 & 0& 1& 4 \\
\end{tabular}
\end{table}

\begin{table}
\caption{Pairs of lens spaces $p$-isospectral for all $p$ of dimension $n=7$ and fundamental group of order $q\leq 300$.}\label{table:n=7}
\begin{tabular}[t]{c@{\;\;}c@{\;\;}c@{\;\;}r@{,}r@{,}r@{,}r}
$q$& $r$ & $t$ & $d_0$ & $d_1$ & $d_2$& $d_3$\\ \hline
 49 &  7 & 1 & 0& 1& 2& 4 \\
 81 &  9 & 1 & 0& 1& 2& 4 \\
 81 &  9 & 1 & 0& 1& 2& 5 \\
 81 &  9 & 1 & 0& 1& 3& 5 \\
 98 &  7 & 2 & 0& 1& 2& 4 \\
100 & 10 & 1 & 0& 1& 2& 4 \\
100 & 10 & 1 & 0& 2& 3& 6 \\
121 & 11 & 1 & 0& 1& 2& 4 \\
121 & 11 & 1 & 0& 1& 2& 5 \\
121 & 11 & 1 & 0& 1& 2& 6 \\
121 & 11 & 1 & 0& 1& 3& 5 \\
121 & 11 & 1 & 0& 1& 3& 6 \\
121 & 11 & 1 & 0& 1& 3& 7 \\
121 & 11 & 1 & 0& 1& 4& 6 \\
121 & 11 & 1 & 0& 1& 4& 7 \\
121 & 11 & 1 & 0& 2& 3& 6 \\
121 & 11 & 1 & 0& 2& 4& 7 \\
144 & 12 & 1 & 0& 1& 2& 5 \\
144 & 12 & 1 & 0& 2& 3& 7 \\
147 &  7 & 3 & 0& 1& 2& 4 \\
162 &  9 & 2 & 0& 1& 2& 4 \\
162 &  9 & 2 & 0& 1& 2& 5 \\
162 &  9 & 2 & 0& 1& 3& 5 \\
169 & 13 & 1 & 0& 1& 2& 4 \\
169 & 13 & 1 & 0& 1& 2& 5 \\
169 & 13 & 1 & 0& 1& 2& 6 \\
169 & 13 & 1 & 0& 1& 2& 7 \\
169 & 13 & 1 & 0& 1& 3& 5 \\
169 & 13 & 1 & 0& 1& 3& 6 \\
169 & 13 & 1 & 0& 1& 3& 7 \\
169 & 13 & 1 & 0& 1& 4& 6 \\
169 & 13 & 1 & 0& 1& 4& 7 \\
169 & 13 & 1 & 0& 1& 4& 8 \\
169 & 13 & 1 & 0& 1& 5& 7 \\
169 & 13 & 1 & 0& 1& 5& 8 \\
169 & 13 & 1 & 0& 2& 3& 6 \\
169 & 13 & 1 & 0& 2& 3& 7 \\
169 & 13 & 1 & 0& 2& 3& 8 \\
169 & 13 & 1 & 0& 2& 4& 7 \\
169 & 13 & 1 & 0& 2& 4& 8 \\
169 & 13 & 1 & 0& 2& 5& 9 \\
169 & 13 & 1 & 0& 2& 5& 8 \\
169 & 13 & 1 & 0& 3& 4& 8 \\
196 & 14 & 1 & 0& 1& 2& 4 \\
196 & 14 & 1 & 0& 1& 2& 5 \\
196 & 14 & 1 & 0& 1& 2& 6 \\
196 & 14 & 1 & 0& 1& 3& 5 \\
196 & 14 & 1 & 0& 1& 3& 6 \\
196 & 14 & 1 & 0& 1& 4& 6 \\
196 & 14 & 1 & 0& 1& 4& 9 \\
196 & 14 & 1 & 0& 1& 5& 9 \\
196 & 14 & 1 & 0& 2& 3& 6 \\
\end{tabular}
\;
\begin{tabular}[t]{c@{\;\;}c@{\;\;}c@{\;\;}r@{,}r@{,}r@{,}r}
$q$& $r$ & $t$ & $d_0$ & $d_1$ & $d_2$& $d_3$\\ \hline
196 & 14 & 1 & 0& 2& 3& 8 \\
196 & 14 & 1 & 0& 2& 4& 8 \\
196 &  7 & 4$\dag$& 0& 1& 2& 4 \\
196 & 14 & 1 & 0& 2& 5& 8 \\
196 & 14 & 1 & 0& 3& 4& 8 \\
196 & 14 & 1 & 0& 3& 5& 9 \\
200 & 10 & 2 & 0& 1& 2& 4 \\
200 & 10 & 2 & 0& 2& 3& 6 \\
225 & 15 & 1 & 0& 1& 2& 4 \\
225 & 15 & 1 & 0& 1& 2& 6 \\
225 & 15 & 1 & 0& 1& 2& 8 \\
225 & 15 & 1 & 0& 1& 3& 7 \\
225 & 15 & 1 & 0& 1& 3& 8 \\
225 & 15 & 1 & 0& 1& 4& 8 \\
225 & 15 & 1 & 0& 1& 5& 9 \\
225 & 15 & 1 & 0& 2& 4& 7 \\
225 & 15 & 1 & 0& 2& 4& 8 \\
242 & 11 & 2 & 0& 1& 2& 4 \\
242 & 11 & 2 & 0& 1& 2& 5 \\
242 & 11 & 2 & 0& 1& 2& 6 \\
242 & 11 & 2 & 0& 1& 3& 5 \\
242 & 11 & 2 & 0& 1& 3& 6 \\
242 & 11 & 2 & 0& 1& 3& 7 \\
242 & 11 & 2 & 0& 1& 4& 6 \\
242 & 11 & 2 & 0& 1& 4& 7 \\
242 & 11 & 2 & 0& 2& 3& 6 \\
242 & 11 & 2 & 0& 2& 4& 7 \\
243 &  9 & 3 & 0& 1& 2& 4 \\
243 &  9 & 3 & 0& 1& 2& 5 \\
243 &  9 & 3 & 0& 1& 3& 5 \\
245 &  7 & 5 & 0& 1& 2& 4 \\
256 & 16 & 1 & 0& 1& 2& 5 \\
256 & 16 & 1 & 0& 1& 2& 7 \\
256 & 16 & 1 & 0& 1& 3& 6 \\
256 & 16 & 1 & 0& 1& 4& 7 \\
256 & 16 & 1 & 0& 1& 5&10 \\
256 & 16 & 1 & 0& 2& 3& 7 \\
256 & 16 & 1 & 0& 2& 3& 9 \\
256 & 16 & 1 & 0& 2& 5& 9 \\
256 & 16 & 1 & 0& 3& 4& 9 \\
256 & 16 & 1 & 0& 3& 5&10 \\
288 & 12 & 2 & 0& 1& 2& 5 \\
288 & 12 & 2 & 0& 2& 3& 7 \\
289 & 17 & 1 & 0& 1& 2& 4 \\
289 & 17 & 1 & 0& 1& 2& 5 \\
289 & 17 & 1 & 0& 1& 2& 6 \\
289 & 17 & 1 & 0& 1& 2& 7 \\
289 & 17 & 1 & 0& 1& 2& 8 \\
289 & 17 & 1 & 0& 1& 2& 9 \\
289 & 17 & 1 & 0& 1& 3& 5 \\
289 & 17 & 1 & 0& 1& 3& 6 \\
\end{tabular}
\;
\begin{tabular}[t]{c@{\;\;}c@{\;\;}c@{\;\;}r@{,}r@{,}r@{,}r}
$q$& $r$ & $t$ & $d_0$ & $d_1$ & $d_2$& $d_3$\\ \hline
289 & 17 & 1 & 0& 1& 3& 7 \\
289 & 17 & 1 & 0& 1& 3& 8 \\
289 & 17 & 1 & 0& 1& 3& 9 \\
289 & 17 & 1 & 0& 1& 3&10 \\
289 & 17 & 1 & 0& 1& 4& 6 \\
289 & 17 & 1 & 0& 1& 4& 7 \\
289 & 17 & 1 & 0& 1& 4& 8 \\
289 & 17 & 1 & 0& 1& 4& 9 \\
289 & 17 & 1 & 0& 1& 4&10 \\
289 & 17 & 1 & 0& 1& 5& 7 \\
289 & 17 & 1 & 0& 1& 5& 8 \\
289 & 17 & 1 & 0& 1& 5& 9 \\
289 & 17 & 1 & 0& 1& 5&10 \\
289 & 17 & 1 & 0& 1& 5&11 \\
289 & 17 & 1 & 0& 1& 6& 8 \\
289 & 17 & 1 & 0& 1& 6& 9 \\
289 & 17 & 1 & 0& 1& 6&10 \\
289 & 17 & 1 & 0& 1& 6&11 \\
289 & 17 & 1 & 0& 1& 7& 9 \\
289 & 17 & 1 & 0& 1& 7&10 \\
289 & 17 & 1 & 0& 2& 3& 6 \\
289 & 17 & 1 & 0& 2& 3& 7 \\
289 & 17 & 1 & 0& 2& 3& 8 \\
289 & 17 & 1 & 0& 2& 3& 9 \\
289 & 17 & 1 & 0& 2& 4& 7 \\
289 & 17 & 1 & 0& 2& 4& 8 \\
289 & 17 & 1 & 0& 2& 4& 9 \\
289 & 17 & 1 & 0& 2& 4&10 \\
289 & 17 & 1 & 0& 2& 5& 8 \\
289 & 17 & 1 & 0& 2& 5& 9 \\
289 & 17 & 1 & 0& 2& 5&10 \\
289 & 17 & 1 & 0& 2& 5&11 \\
289 & 17 & 1 & 0& 2& 6& 9 \\
289 & 17 & 1 & 0& 2& 6&10 \\
289 & 17 & 1 & 0& 2& 6&11 \\
289 & 17 & 1 & 0& 2& 7&10 \\
289 & 17 & 1 & 0& 2& 7&11 \\
289 & 17 & 1 & 0& 3& 4& 8 \\
289 & 17 & 1 & 0& 3& 4& 9 \\
289 & 17 & 1 & 0& 3& 4&10 \\
289 & 17 & 1 & 0& 3& 5& 9 \\
289 & 17 & 1 & 0& 3& 5&10 \\
289 & 17 & 1 & 0& 3& 6&10 \\
289 & 17 & 1 & 0& 3& 6&11 \\
289 & 17 & 1 & 0& 3& 7&11 \\
289 & 17 & 1 & 0& 3& 7&12 \\
289 & 17 & 1 & 0& 4& 5&10 \\
289 & 17 & 1 & 0& 4& 6&11 \\
294 &  7 & 6 & 0& 1& 2& 4 \\
300 & 10 & 3 & 0& 1& 2& 4 \\
300 & 10 & 3 & 0& 2& 3& 6 \\
\end{tabular}
\end{table}

\begin{table}
\caption{Pairs of lens spaces $p$-isospectral for all $p$ of dimension $n=9$ and fundamental group of order $q\leq 150$.}\label{table:n=9}
\begin{tabular}[t]{c@{\;\;}c@{\;\;}c@{\;\;}r@{,}r@{,}r@{,}r@{,}r}
$q$& $r$ & $t$ & $d_0$ & $d_1$ & $d_2$& $d_3$& $d_4$\\ \hline
 64 &  8 & 1 & 0& 1& 2& 3& 5 \\
 72 &  ? & ?    \\
 81 &  9 & 1 & 0& 1& 2& 3& 5 \\
 81 &  9 & 1 & 0& 1& 2& 4& 5 \\
 81 &  9 & 1 & 0& 1& 2& 4& 6 \\
121 & 11 & 1 & 0& 1& 2& 3& 5 \\
121 & 11 & 1 & 0& 1& 2& 3& 6 \\
121 & 11 & 1 & 0& 1& 2& 4& 5 \\
121 & 11 & 1 & 0& 1& 2& 4& 6 \\
121 & 11 & 1 & 0& 1& 2& 4& 7 \\
121 & 11 & 1 & 0& 1& 2& 5& 6 \\
121 & 11 & 1 & 0& 1& 2& 5& 7 \\
121 & 11 & 1 & 0& 1& 3& 4& 6 \\
\end{tabular}
\qquad
\begin{tabular}[t]{c@{\;\;}c@{\;\;}c@{\;\;}r@{,}r@{,}r@{,}r@{,}r}
$q$& $r$ & $t$ & $d_0$ & $d_1$ & $d_2$& $d_3$& $d_4$\\ \hline
121 & 11 & 1 & 0& 1& 3& 4& 7 \\
121 & 11 & 1 & 0& 1& 3& 5& 7 \\
121 & 11 & 1 & 0& 1& 3& 5& 8 \\
121 & 11 & 1 & 0& 1& 3& 6& 7 \\
121 & 11 & 1 & 0& 1& 3& 6& 8 \\
121 & 11 & 1 & 0& 1& 4& 5& 7 \\
121 & 11 & 1 & 0& 2& 3& 5& 7 \\
121 & 11 & 1 & 0& 2& 3& 4& 7 \\
128 &  8 & 2 & 0& 1& 2& 3& 5 \\
144 & 12 & 1 & 0& 1& 2& 3& 5 \\
144 & 12 & 1 & 0& 1& 2& 4& 7 \\
144 & 12 & 1 & 0& 2& 3& 5& 7 \\
\end{tabular}
\end{table}

We will show tables with many examples of pairs of lens spaces $p$-isospectral for all $p$ in low dimensions
$n=5$, $7$ and $9$.

The finite-implies-infinite principle mentioned above allowed us to give an algorithm ---implemented in
Sage~\cite{Sage}---
that can find, for each $m$ and $q$,
all pairs of lens spaces of dimension $n=2m-1$ and fundamental group of order $q$
that are $p$-isospectral for all $p$.
This is shown in the tables for $n=5$ and $q\leq 500$,
$n=7$ and $q\leq 300$, and $n=9$ and $q\leq 150$.

On the other hand, Peter Doyle
has implemented a clever computer program using the function $\QQ_L(w,z)$ in Theorem~\ref{thm:allpIkeda} that
can distinguish very quickly whether two lens spaces are $p$-isospectral for all $p$.
We thank Peter for verifying with his method that all of our examples are correct.

For positive integers $r$ and $t$, and $q=r^2t$ (see \cite[\S5]{LMRhodge}) we introduce the element $\ww:=rt+1.$
Clearly $\ww^k \equiv krt+1\pmod{q}$, thus $\ww^r\equiv 1\pmod{q}$.
Since the parameters in most of the lens spaces occurring in the low dimensional examples are congruent to $\pm1\pmod{rt}$, they can be written as powers of $\ww$.
In this way, the basic example can be written as
\begin{equation}\label{eq:r2t-inverses}
\begin{array}{l}
L(49;1,6,15) = L(49;\ww^{0},-\ww^{1}, \ww^{-2}) \cong L(q;\ww^0,\ww^{-1},\ww^{-3}),\\
L(49;1,6,20) = L(49;\ww^{0},-\ww^{1},-\ww^{ 3}) \cong L(q;\ww^0,\ww^{ 1},\ww^{ 3}).
\end{array}
\end{equation}
Here $\cong$ means isometry between lens spaces.
Each entry $q,r,t,(d_0,\dots,d_m)$ in the tables, represents the pair of lens spaces
\begin{equation}\label{eq6:inversos}
L(q;\ww^{d_0},\ww^{d_1},\dots,\ww^{d_{m-1}})
\quad\text{and}\quad
L(q;\ww^{-d_0},\ww^{-d_1},\dots,\ww^{-d_{m-1}}),
\end{equation}
which are $p$-isospectral for all $p$.
When an entry has a dag $\dag$, it means that the pair corresponding to this line is isometric to the pair of the previous line.
This only happens when $t$ is not square-free, thus it is possible to write $q$ as $r^2t$ in more than one way.

Question~5.1 in \cite{LMRhodge} asked for  conditions on $d=(d_0,\dots,d_{m-1})$, $r$ and $t$,  so that the pair of lens spaces in \eqref{eq6:inversos} are $p$-isospectral for every $p$.
Peter Doyle worked on this question in collaboration with D.~DeFord (see \cite{DeFordDoyle14}) and found sufficient conditions by using in a clever way similar techniques as Ikeda.

To state their condition, in their terminology, $d = (d_0,\dots,d_{m-1})$ is said to be:
\begin{itemize}
\item \emph{univalent mod $r$} if its entries are distinct mod $r$,
\item \emph{reversible mod $r$} if the pair of lens spaces in \eqref{eq6:inversos} are isometric,
\item \emph{good mod $r$} if it is univalent or reversible mod $r$,
\item \emph{hereditarily good mod $r$} if it is good mod $c$ for all $c$ dividing $r$.
\end{itemize}
Then, Theorem~1 in \cite{DeFordDoyle14}) says that
\begin{quote}
\emph{if $d$ is hereditarily good mod $r$ and not reversible mod $r$, then the lens spaces in \eqref{eq6:inversos} are $p$-isospectral for all $p$ and are not isometric.}
\end{quote}

By using this condition it is remarkably simple to produce examples.
Indeed, the family in \eqref{eq:infinitefamily} satisfies the conditions of this theorem.
The pairs in \eqref{eq:infinitefamily} has $d=(0,1,3)$ from \eqref{eq:r2t-inverses}.
Now, $d$ is clearly univalent mod $r$ for $r\geq4$, reversible mod $r$ for $r=1,2,4,5$, thus $d$ is good mod $r$ for any $r\neq 3$.
Consequently, $d$ is hereditarily good mod $r$ for any $r$ not divisible by $3$, and not reversible for $r\geq7$.

We believe that this theorem is an important step in the determination of  all the lens spaces that are $p$-isospectral for all $p$.
Though many such examples do come from the  theorem, there are exceptions like the ones we next describe.

\begin{exam}\label{exam:ejemplos-raros}
To each lens space $L(q;\ww^{d_0},\ww^{d_1},\dots,\ww^{d_{m-1}})$ with $0=d_0<d_1<\dots<d_{m-1}<r$ one can associate the ordered partition $$r=(d_1-d_0)+\dots+(d_{m-1}-d_{m-2})+(r-d_{m-1}).$$
One can check that two lens spaces are isometric if their partitions differ by a cyclic reordering.

Not all the known examples of $p$-isospectral lens spaces for all $p$ can be written as in \eqref{eq6:inversos}.
For instance, this is the case for the pair $\overline L,\overline{L'}$, dual to the basic pair \eqref{eq:basicexample}, since their parameters are not necessarily congruent to $\pm1\pmod{rt}$.
Moreover, this phenomenon already occurs in the case of the curious example $L(72;1, 5, 7, 17, 35)$, $L(72;1, 5, 7, 19, 35)$, since neither these lens spaces nor their duals (namely $L(72;1, 5, 7, 11, 19, 25, 35)$ and $L(72;1, 5, 7, 11, 23, 29, 31)$)
 can be written as in~\eqref{eq6:inversos}.
\end{exam}

\begin{problem}\label{problem}
  Determine  all pairs of $n$-dimensional lens spaces $p$-isospectral for all $p$ with fundamental group of order $q$ (or at least all such pairs for infinite values of $n$).
  \end{problem}
  \begin{question}
  Are there families of  non-isometric lens spaces $p$-isospectral for all $p$ having more than two elements?
\end{question}

\subsection{Final remarks}
We end this paper with the following comments.

\begin{rem}
It is shown in \cite[Lemma 7.6]{LMRhodge} that the lens spaces in the family constructed above are  homotopically equivalent to each other.
However, they cannot be simply homotopically equivalent  (see \cite[\S31]{Co}) since in this case they would be isometric.
\end{rem}

\begin{rem}
Despite being $p$-isospectral for every $p$, the $5$-dimensional lens spaces $L$ and $L'$ in our family are `very far' from being strongly isospectral.
Strongly isospectral spherical space forms are necessarily $\tau$-isospectral for every representation $\tau$ of $\SO(5)$.
However, in the case at hand, one can explicitly show many representations $\tau$ of $\SO(5)$ such that lens spaces $ L, L'$ are not $\tau$-isospectral.

If we look at the basic case $L=L(49;1,6,15)$ and $L' = L(49;1,6,20)$, in \cite[\S8]{LMRhodge} we show that if  $\pi_0$ is the unitary irreducible representation of $\SO(6)$ with highest weight $\Lambda_0=4\varepsilon_1+3\varepsilon_2$, then the lens spaces $L$ and $L'$ are not $\tau$-isospectral for every irreducible representation $\tau$ of $\SO(5)$ with highest weight of the form $b_1\varepsilon_1 + b_2\varepsilon_2$  for $4\geq b_1\geq 3\geq b_2\geq0$.

By computer methods, by using Sage~\cite{Sage}, we checked that there are  many   choices  $\pi_0$ with this property,
thus providing many other $K$-types $\tau$ such that the lens spaces $L$ and $L'$ are not $\tau$-isospectral.
In this connection, in \cite{LMRhodge} we make the following conjecture:
\begin{quote}
\emph{There are only finitely many irreducible representations $\tau$ of $K=\SO(5)$ such that $L$ and $L'$ are $\tau$-isospectral}.
\end{quote}
\end{rem}

\begin{rem}\label{rem:Dirac-case}
Recently, Sebastian Boldt and the first named author in \cite{Boldt-Lauret-one-norm-Dirac} extended the methods in this paper to the Dirac operator on lens spaces admitting a spin structure.
As it can be expected, the Dirac case involves more technical difficulties.
In this case, one associates to a lens space $L=L(q;s_1,\dots,s_m)$ with a fixed spin structure, an \emph{affine congruence lattice} $\mathcal L$.
When $q$ and $m$ are odd, $L$ admits exactly one spin structure and the associated $\mathcal L$ is given by
\begin{equation}\label{eq:mathcalLDirac}
   \mathcal L =\{(a_1,\dots,a_m) \in (\tfrac12+\Z)^m : 2(a_1s_1+\dots +a_ms_m)\equiv0\pmod q\}.
\end{equation}
The case $q$ even is a bit more involved since $L$ admits two spin structures.

In analogy with Theorem~\ref{thm:0-characterization}, the authors show that two lens spaces are Dirac isospectral if and only if their associated affine congruence lattices are $\norma{\cdot}$-isospectral. % (see \cite[\S4]{Boldt-Lauret-one-norm-Dirac}).
Furthermore, they are able to construct the following examples:
\begin{itemize}
\item an increasing family of lens spaces mutually Dirac isospectral with increasing dimension;
\item an infinite sequence of $7$-dimensional lens spaces, each of them with two Dirac isospectral spin structures;
\item an infinite sequence of pairs of non-isometric $7$-dimensional lens spaces admitting exactly one spin structure that are Dirac isospectral.
\end{itemize}
\end{rem}

\begin{rem}
 All  examples in the literature of pairs of isospectral spherical space forms with non-cyclic fundamental group (i.e.\ lens spaces are not allowed) are obtained by Sunada's method, hence they are strongly isospectral and correspond to almost conjugate subgroups of $\SO(2m)$   (\cite{Ik83}, \cite{Gi}, \cite{Wo2}).

\begin{question}
Can one construct $0$-isospectral spherical space forms with non-cyclic fundamental groups that are not strongly isospectral?
\end{question}

Note that two $3$-dimensional isospectral spherical space forms are isometric (\cite{IY}, \cite{Y}, \cite{Ik80a}), so the answer is negative in dimension $3$.
Furthermore, Wolf in \cite[Cor.~7.3]{Wo2} showed the non-existence of such examples for any dimension $2m-1$ with $m$ prime (see also \cite[Thm.~3.1 and Thm.~3.9]{Ik80c}).
\end{rem}

\bibliographystyle{plain}

\begin{thebibliography}{BCDS94}

\bibitem[Be93]{Be93}
    {\sc B{\'e}rard, P.}
    Transplantation et isospectralit\'e {II}.
    {\it J. Lond. Math. Soc. (2)} \textbf{48} (1993), 565--576.

\bibitem[BL14]{Boldt-Lauret-one-norm-Dirac}
    {\sc Boldt, S.; Lauret, E.}
    An explicit formula for the Dirac multiplicities on lens spaces.
    \texttt{arXiv:1412.2599}, preprint (2014).

\bibitem[BGG98]{BGG}
    {\sc Brooks, R; Gornet, R; Gustafson, W. H.}
    Mutually isospectral Riemann surfaces.
    {\it Adv. Math.} {\bf 138}:2 (1998), 306--322.

\bibitem[BT87]{BT}
    {\sc Brooks, R.; Tse, R.}
    Isospectral surfaces of small genus.
    {\it Nagoya Math. J.} {\bf 107} (1987), 13--24.

\bibitem[Br96]{Br96}
    {\sc Brooks, R.}
    Isospectral graphs and isospectral surfaces.
    {\it S\'eminaire de Th\'eorie Spectrale et G\'eom\'etrie} \textbf{15}, Univ. Grenoble I, Saint-Martin-d'Hères, Ann\'ee 1996--1997, 105--113.

\bibitem[Bu92]{BuLibro}
    {\sc Buser, P.},
    Geometry and spectra of compact Riemann surfaces.
    {\it Birkh\"auser, Boston}, (1992).

\bibitem[Bu86]{Bu86}
    {\sc Buser, P.},
    Isospectral Riemann surfaces.
    {\it Ann. Inst. Fourier (Grenoble)} {\bf 36}:2, (1986), 167--192.

\bibitem[Bu88]{Bu88}
    {\sc Buser, P.},
    Cayley graphs and planar isospectral domains.
    \textit{Geometry and analysis on manifolds},
    Lecture Notes in Math. {\bf 1339} (1988), 64--77.

\bibitem[BCDS94]{BCDS}
    {\sc Buser, P.; Conway, J.H.; Doyle, P.; Semmler, K.},
    Some Planar Isospectral Domains.
    {\it Int. Math. Res. Not.} {\bf 1994} (1994), 391--400.

\bibitem[Co70]{Co}
    {\sc Cohen, M.}
    A course in simple-homotopy theory.
    {\it Graduate Text in Mathematics} \textbf{10}.
    Springer-Verlag, New York-Heidelberg-Berlin 1970.

\bibitem[CS92]{CS2}
    {\sc Conway, J.H.; Sloane, N.J.A.}
    Four-dimensional lattices with the same theta series.
    \textit{Int. Math. Res. Not. IMRN} \textbf{4} (1992), 93--96.

\bibitem[DD14]{DeFordDoyle14}
    {\sc DeFord, D.; Doyle, P.}
    Cyclic groups with the same Hodge series.
    \texttt{arXiv:1404.2574}, preprint (2014).

\bibitem[DG89]{DG89}
    {\sc DeTurck, D.; Gordon, C.}
    Isospectral deformations II: Trace formulas, metrics, and potentials.
    \textit{Comm Pure Appl. Math.} \textbf{42}:8 (1989), 1067--1095.

\bibitem[DM92]{DM}
    {\sc Dotti, I.; Miatello, R.J.}
    Isospectral compact flat manifolds.
    {\it Duke Math. J.} \textbf{68} (1992), 489--498.

\bibitem[DR04]{DR1}
    {\sc Doyle, P.G.; Rossetti, J.P.}
    Tetra and Didi, the cosmic spectral twins.
    {\it Geom. Topol.} \textbf {8} (2004), 1227--1242.

\bibitem[FK99]{FK}
    {\sc Fujii, H.; Katsuda, A.}
    Isospectral graphs and isospectral constants.
    {\it Discrete Math.} {\bf 207} (1999), 33--52.

\bibitem[Ga26]{Ga26}
    {\sc Gassmann, F.}
    Bemerkungen zur Vorstehenden Arbeit von Hurwitz.
    {\it Math. Z.} {\bf 25} (1926), 665--675.

\bibitem[Gi85]{Gi}
    {\sc Gilkey, P.}
    On spherical space forms with meta-cyclic fundamental group which are isospectral but not equivariant cobordant.
    {\it Compos. Math.} \textbf{56} (1985), 171--200.

\bibitem[Go85]{Go85}
    {\sc Gordon, C.S.}
    The Laplace spectra versus the length spectra of Riemannian manifolds.
    {\it Nonlinear problems in geometry (Mobile, Ala., 1985)}, Contemp. Math. {\bf 51} (1986), 63--80.

\bibitem[Go86]{Go}
    {\sc Gordon, C.}
    Riemannian manifolds isospectral on functions but not on 1-forms.
    {\it J. Differential Geom.} \textbf{24}:1 (1986), 79--96.

\bibitem[Go00]{Go00}
    {\sc Gordon, C.S.}
    Survey of isospectral manifolds.
    \textit{Handbook of differential geometry}, Vol. I, 747--778, North-Holland, Amsterdam, 2000.

\bibitem[Go09]{Go08}
    {\sc Gordon, C.S.}
    Sunada's Isospectrality Technique: Two Decades Later.
    \textit{Spectral analysis in geometry and number theory}, 45--58, Contemp. Math. {\bf 484}, Amer. Math. Soc., 2009.

\bibitem[GMW05]{GMW}
    {\sc Gordon, C.S.; Makover, E.; Webb, D.L.}
    Transplantation and Jacobians of Sunada isospectral Riemann surfaces.
    \textit{Adv. Math.} {\bf 197} (2005), 86--119.

\bibitem[GWW92]{GWW}
    {\sc Gordon, C.S; Webb, D.L.; Wolpert, S.}
    Isospectral plane domains and surfaces via Riemannian orbifolds.
    \textit{Invent. Math.} {\bf 110} (1992), 1--22.

\bibitem[GW84]{GW84}
    {\sc Gordon, C.S.; Wilson, E.N.}
    Isospectral deformations of compact solvmanifolds.
    \textit{J. Differential Geom.} {\bf 19} (1984), 241--256.

\bibitem[GW97]{GW97}
    {\sc Gordon, C.S.; Wilson, E.N.}
    Continuous families of isospectral Riemannian metrics which are not locally isometric.
    \textit{J. Differential Geom.} {\bf 47} (1997), 504--529.

\bibitem[Gt94]{Gt94}
    {\sc Gornet, R.}
    The length spectrum and representation theory on two- and three-step nilpotent Lie groups.
    {\it Geometry of the spectrum} (Seattle, WA, 1993), 133--155, Contemp. Math. {\bf 173}, Amer. Math. Soc., 1994.

\bibitem[Gt96]{Gt96}
    {\sc Gornet, R.}
    The marked length spectrum vs the $p$-form spectrum of Riemannian nilmanifolds.
    \textit{Comment. Math. Helv.} {\bf 71} (1996), 297--329.

\bibitem[Gt00]{Gt00}
    {\sc Gornet, R.}
    Continuous families of {R}iemannian manifolds, isospectral on functions but not on 1-forms.
    {\it J. Geom. Anal.} \textbf{10} (2000), 281--298.

\bibitem[GM06]{GM}
    {\sc Gornet, R.; McGowan, J.}
    Lens Spaces, isospectral on forms but not on functions.
    {\it LMS J. Comput. Math.} {\bf 9} (2006), 270--286.

\bibitem[Ik80a]{Ik80a}
    {\sc Ikeda, A.}
    On lens spaces which are isospectral but not isometric.
    {\it Ann. Sci. \'Ecole Norm. Sup.} (4) {\bf 13}:3 (1980), 303--315.

\bibitem[Ik80b]{Ik80b}
    {\sc Ikeda, A.}
    On the spectrum of a riemannian manifold of positive constant curvature.
    {\it Osaka J. Math.} {\bf 17} (1980), 75--93.

\bibitem[Ik80c]{Ik80c}
    {\sc Ikeda, A.}
    On the spectrum of a riemannian manifold of positive constant curvature II.
    {\it Osaka J. Math.} {\bf 17} (1980), 691--762.

\bibitem[Ik83]{Ik83}
    {\sc Ikeda, A.}
    On spherical space forms which are isospectral but not isometric.
    {\it J. Math. Soc. Japan}, {\bf 35}:3 (1983), 437--444.

\bibitem[Ik88]{Ik88}
    {\sc Ikeda, A.}
    Riemannian manifolds $p$-isospectral but not $p+1$-isospectral.
    In {\it Geometry of manifolds ({M}atsumoto, 1988)'', Perspect. Math.} {\bf 8} (1989), 383--417.

\bibitem[IT78]{IT}
    {\sc Ikeda, A.; Taniguchi, Y.}
    Spectra and eigenforms of the Laplacian on $S^n$ and $P^n(\C)$.
    {\it Osaka J. Math.} {\bf 15} (1978), 515--546.

\bibitem[IY79]{IY}
    {\sc Ikeda, A.; Yamamoto, Y.}
    On the spectra of 3-dimensional lens spaces.
    {\it Osaka J. Math.} {\bf 16}:2 (1979), 447--469.

\bibitem[La14]{L}
    {\sc Lauret, E.A.}
    Representation equivalent Bieberbach groups and strongly isospectral flat manifolds.
    {\it Canad. Math. Bull.} \textbf{57} (2014), 357--363.

\bibitem[LMR13]{LMRcohom}
    {\sc Lauret, E.A.; Miatello, R.J.; Rossetti, J.P.}
    Strongly isospectral manifolds with nonisomorphic cohomology rings.
    \textit{Rev. Mat. Iberoam.} {\bf 29} (2013), 611--634.

\bibitem[LMR15]{LMRrepequiv}
    {\sc Lauret, E. A.; Miatello, R.J.; Rossetti, J.P.}
    Representation equivalence and p-spectrum of constant curvature space forms.
    {\it J. Geom. Anal.} \textbf{25} (2015), 564--591.

\bibitem[LMR]{LMRhodge}
    {\sc Lauret, E.A.; Miatello, R.J.; Rossetti, J.P.}
    Spectra of lens spaces from 1-norm spectra of congruence lattices.
    \texttt{arXiv:1311.7167}, preprint (2013).

\bibitem[McR06]{McR}
    {\sc McReynolds, D.B.}
    Isospectral locally symmetric manifolds.
    \texttt{arXiv:math/0606540}, preprint (2006).

\bibitem[MR99]{MRhw}
   {\sc Miatello, R. J.; Rossetti, J. P.}
   Isospectral Hantzsche-Wendt manifolds.
   {\it J. Reine Angew. Math.} {\bf 515} (1999), 1--23.

\bibitem[MR01]{MRp}
    {\sc Miatello, R.J.; Rossetti, J.P.}
    Flat manifolds isospectral on $p$-forms.
    {\it J. Geom. Anal.} {\bf 11} (2001), 649--667.

\bibitem[MR03]{MRl}
    {\sc Miatello, R.J.; Rossetti, J.P.}
    Length spectra and $p$-spectra of compact flat manifolds.
    {\it J. Geom. Anal.} {\bf 13}:4 (2003), 631--657.

\bibitem[Mi64]{Mi64}
    {\sc Milnor, J.}
    Eigenvalues of the Laplace operator on certain manifolds.
    {\it Proc. Natl. Acad. Sci. USA} \textbf{51}:4 (1964), 542.

\bibitem[Pe95]{Pe95}
    {\sc Pesce, H.}
    Vari\'et\'es hyperboliques et elliptiques fortement isospectrales.
    {\it J. Funct. Anal.} {\bf 133} (1995), 363--391.

\bibitem[Pe96]{Pe96}
    {\sc Pesce, H.}
    Repr\'esentations relativement \'equivalentes et vari\'et\'es riemanniennes isospectrales.
    {\it Comment. Math. Helv.}, {\bf 71} (1996), 243--268.

\bibitem[Pe98]{Pe98}
    {\sc Pesce, H.}
    Quelques applications de la th\'eorie des repr\'esentations en g\'eom\'etrie spectrale.
    {\it Rend. Mat. Appl.}, {\bf 18} (1998), 1--63.

\bibitem[Sa]{Sage}
    {\sc Stein, W.A. et al.}
    {\it Sage Mathematics Software (Version 4.3)}.
    The Sage Development Team, 2009, http://www.sagemath.org.

\bibitem[Schi90]{Schi1}
    {\sc Schiemann, A.}
    Ein Beispiel positiv definiter quadratischer Formen der Dimension $4$ mit gleichen Darstellungszahlen.
    {\it Arch. Math. (Basel)} {\bf 54}:4 (1990), 372--375.

\bibitem[Schu95]{Schu95}
    {\sc Schueth, D.}
    Continuous families of quasi-regular representations of solvable Lie groups.
    {\it J. Funct. Anal.} {\bf 134} (1995),  247--259.

\bibitem[Sp89]{Sp}
    {\sc Spatzier, R.J.}
    On isospectral locally symmetric spaces and a theorem of Von Neumann.
    {\it Duke Math. J.} \textbf{59}:1 (1989), 289--294.

\bibitem[Su85]{Su}
    {\sc Sunada, T.}
     Riemannian coverings and isospectral manifolds.
    {\it Ann. of Math. (2)} \textbf{121}:1 (1985), 169--186.

\bibitem[Su02]{Su02}
    {\sc Sutton, C.}.
    Isospectral simply-connected homogeneous spaces and the spectral rigidity of group actions.
    {\it Comment. Math. Helv.} {\bf 77} (2002), 701--717.

\bibitem[Vi80]{Vi}
    {\sc Vign{\'e}ras, M.}
    Vari\'et\'es riemanniennes isospectrales et non isom\'etriques.
    {\it Ann. of Math. (2)} \textbf{112}:1 (1980). 21--32.

\bibitem[Wa73]{Wa}
    {\sc Wallach, N.}
    {\it Harmonic Analysis on homogeneous spaces.}
    Dekker, New York, 1973.

\bibitem[Wi41]{Wi}
    {\sc Witt, E.}
    Eine Identitat zwischen Modulformen zweiten Grades.
    {\it Abh. Math. Sem. Univ.} \textbf{14} (1941), 323--337.

\bibitem[Wo67]{Wo}
    {\sc Wolf, J.A.}
    {\it Spaces of constant curvature.}
    Mc Graw-Hill, 1967.

\bibitem[Wo01]{Wo2}
    {\sc Wolf, J.A.}
    {\it Isospectrality for spherical space forms.}
    Result. Math., {\bf 40} (2001), 321--338.

\bibitem[Ya80]{Y}
    {\sc Yamamoto, Y.}
    On the number of lattice points in the square $x+y\leq u$ with a certain congruence condition.
    \textit{Osaka J. Math.} {\bf 17}:1 (1980), 9--21.

\end{thebibliography}

\end{document}